\newtheorem{thm}{Theorem}[section]
\newtheorem{lem}[thm]{Lemma}
\newtheorem{prop}[thm]{Proposition}
\newtheorem{cor}[thm]{Corollary}
\newtheorem{defn}[thm]{Definition}
\newtheorem{question}[thm]{Question}
\newtheorem{rem}[thm]{Remark}
\newtheorem{exmp}[thm]{Example}
\def\N{{\mathbf N}}
\def\Z{{\mathbf Z}}
\def\P{{\mathbf P}}
\def\C{{\mathbf C}}
\newcommand{\OP}{\mathcal{O}_{S,P}}
\newcommand{\OC}{\mathcal{O}_{S,C}}
\newcommand{\mC}{\mathfrak{m}_{S,C}}
\newcommand{\hOC}{\widehat{\mathcal{O}}_{S,C}}
\newcommand{\p}{\partial}
\newcommand{\w}{\wedge}
\newcommand{\F}{\mathbf{F}}
\newcommand{\res}{\textrm{res}}
\newcommand{\supp}{\textrm{Supp}}
\def\fract#1/#2{\hbox{\leavevmode
\kern.1em \raise .5ex \hbox{\the\scriptfont0 $#1$}\kern-.1em }/
\hbox{\kern-.15em \lower .25ex \hbox{\the\scriptfont0 $#2$}}
}
\begin{document}

\title[Residues on surfaces and application to coding theory]{Sums of residues on algebraic surfaces and application to coding theory}
\author{Alain \textsc{Couvreur}}

\address{Institut de Math\'ematiques de Toulouse, UMR 5219, Universit\'e Paul Sabatier, 118 route de Narbonne, 31062 Toulouse, France}
\email{couvreur@math.univ-toulouse.fr}

\begin{abstract}
In this paper, we study residues of differential $2$-forms on a smooth algebraic surface over an arbitrary field and give several statements about sums of residues.
Afterwards, using these results we construct algebraic-geometric codes which are an extension to surfaces of the well-known differential codes on curves.
We also study some properties of these codes and extend to them some known properties for codes on curves.
\end{abstract}

\maketitle

\noindent \textbf{AMS Classification:} 14J99, 14J20, 14G50, 94B27.

\noindent \textbf{Keywords:} Algebraic surfaces, differentials, residues, algebraic-geometric codes. 

\section*{Introduction}
The present paper is divided in two parts. The first one is a theoretical study of residues of differential $2$-forms on algebraic surfaces over an arbitrary field.
The second one uses results of the first part to construct differential codes on algebraic surfaces and to study some of their properties. The reader especially interested in coding theory is encouraged to read briefly the definitions and the results of the first part and then to jump to the second part.

\subsection*{About residues}
If the notion of residue is well-known for differential forms on curves, there is no unified definition in higher dimension.
On complex varieties, one can distinguish two objects called residues in the literature. The first one appears for instance in Griffiths and Harris \cite{gh} chapter V.
In this book, given an $n$-dimensional variety $X$, the residue of a meromorphic $n$-form $\omega$ at a point $P$ is a complex number obtained by computing an integral on a real $n$-cycle. 
This object depends on some $n$-uplet of divisors whose sum contains the poles of $\omega$ in a neighborhood of $P$.
Another definition is given in \textit{Compact complex surfaces} by Barth, Hulek, Peters and Van De Ven (\cite{bhpv} II.4). In this book, given an $n$-dimensional variety $X$ and a one-codimensional subvariety $Y$ of $X$,
the residue along $Y$ of a $q$-form on $X$ having a simple pole along $Y$ is a $(q-1)$-form on $Y$. The computation of this residue can be obtained by a combinatorial way, or by computing an integral on a real subvariety (\cite{bhpv} II.4). 

In algebraic geometry over an arbitrary field, several references deal with \textit{residues}, for instance Hartshorne \cite{harRD} or Lipman \cite{lip}.
Actually their main objective is to establish duality theorems generalizing Serre's one. Thus, their first intention is not to define residues of differential forms on higher-dimensional varieties over an arbitrary field.

The goal of the first part of this paper is to generalize to surfaces over an arbitrary field, the definitions of residues given
for complex varieties  in \cite{bhpv} and in \cite{gh}.
Then, we will establish results of independence on the choice of local coordinates, and focus on summation properties. 
Notice that Hartshorne, in \cite{harRD} III.9, introduces a \textit{Grothendieck residue symbol} having slightly the same properties as the residue defined in Griffiths and Harris's book. Moreover, Lipman in \cite{lip} section 12 states a summation residue formula which is closed to the theorem \ref{RF3} in the present paper.
Finally, most of the results of this first part can be considered as consequences of several statements lying in \cite{harRD} or \cite{lip}. Nevertheless, both references are long and contain an important functorial machinery which is not necessary to define residues on surfaces, to study their properties and to obtain summation formulas. That is why we decided to present a self-contained paper for which references \cite{harRD} and \cite{lip} are not prerequisites.

Notice that we chose to work only on surfaces.
At least two main reasons justify this choice.
First, working on $n$-dimensional varieties would have given too heavy notations.
Second, the most difficult step in algebraic-geometric coding theory lies between curves and higher-dimensional varieties

\subsection*{About algebraic-geometric codes}
In coding theory, two main problems are frequently studied. The first one is: \textit{how to find a lower bound for its minimal distance of a given code?} The second one is: \textit{how to find algorithms correcting a suitable number of errors in a \textit{reasonable} time?} 
Given an arbitrary code defined by a generator or a parity-check matrix, both problems are very difficult. A good way to solve them, is to get a geometric (or arithmetic) \textit{realization} of the code.
Then, one or both problems may be translated into geometric (or arithmetic) problems. This is, for instance, successfull for the study of Reed-Muller codes.
Consequently, geometric constructions of codes are often interesting.

\subsubsection*{Codes on curves} In 1981, Goppa introduced in a construction of error-correcting codes using algebraic curves (see \cite{goppa}). Their study has been a fruitful topic of research during last thirty years.
Hundreds of papers are devoted to this subject.
One of the main reasons why these codes have been so intensively studied is that some families of such codes have excellent asymptotic parameters. Particularly, Tsfasman, Vl{\u{a}}du\c{t} and Zink proved in \cite{TVZ} that some families of algebraic-geometric codes beat the Gilbert-Varshamov bound.
Most of the basic results about codes on curves are summarized in \cite{step},  \cite{sti} chapter II and \cite{TV}.

\subsubsection*{Codes on higher-dimensional varieties} In higher dimension, the topic has not been as extensively explored.
The first general construction of algebraic-geometric code from a variety of arbitrary dimension has been given by Manin in the paper with Vl{\u{a}}du\c{t} \cite{man}.
Afterwards, codes coming from some particular varieties have been studied. Among others, in \cite{aubry}, Aubry
dealt with codes on quadric varieties.
His results have been improved in dimension $2$ and $3$ by Edoukou in \cite{fred2} and \cite{fred3}.
Codes on Grassmannians have been discussed by Nogin in \cite{nogin} then by Ghorpade and Lachaud in \cite{gl}.
Codes on Hermitian varieties have been treated by Chakravarti in \cite{chak2}, then by Hirschfeld, Tsfasman and Vl{\u{a}}du\c{t} in \cite{HTV}, afterwards by S\o rensen in his PhD thesis \cite{sorensen} and by Edoukou in \cite{fred}.
In \cite{rodier}, Rodier presented a unified point of view for all the above-cited examples regarding these varieties as flag-varieties and gave some more examples of codes.
Zarzar studied in \cite{zarzar} the parameters of codes on surfaces having a small Picard number. The author proposed also a decoding algorithm for such codes in a join work with Voloch \cite{agctvoloch}.
General bounds on the parameters of codes on algebraic varieties of arbitrary dimension have been given by Lachaud in \cite{lachaud} and by S\o ren Have Hansen in \cite{soH}. 
Finally, a survey paper \cite{little} by Little summarizing most of the known works on codes on higher-dimensional varieties appeared recently. 

 Notice that almost all the references cited below, deal with the question of bounding or evaluating the parameters of some error-correcting codes. This will not be the purpose of the present paper whose objective is to give general theoretical statements extending some known results for codes on curves.

\subsubsection*{Different construction of codes on curves} In the theory of algebraic-geometric codes on curves, one can distinguish two different constructions.
\textit{Functional codes} are obtained by evaluating elements of a Riemann-Roch space at some set of rational points on a curve.
\textit{Differential codes} are obtained by evaluating residues of some rational differential forms at these points.
For higher-dimensional varieties,
only the functional construction has been extended and studied (see references below).
The differential one does not seem to have a natural generalization and this question has never been treated before.

\subsubsection*{Motivations}
There are at least three motivations for an extension to surfaces of the differential construction.
The first one is historical. Indeed, the first construction of algebraic-geometric codes given by Goppa in \cite{goppa} used differentials. This construction generalized that of classical Goppa codes which can be regarded as differential codes on the projective line.
The second one is that the orthogonal of a functional code on a curve is a differential one.
Moreover, this statement is used in almost all known algebraic decoding algorithms (see \cite{hohpel}).
The third motivation is that, as said before, it is always interesting to have a geometric realization of a code.
To finish with motivations, notice that the introduction of the above cited survey paper \cite{little} of Little contains the following sentences.

\smallbreak

\textit{``In a sense, the first major difference between higher dimensional varieties and curves is that points on $X$ of dimension $\geq 2$ are subvarieties of codimension $\geq 2$, not divisors. This means that many familiar tools used for Goppa codes (e.g. Riemann-Roch theorems, the theory of differentials and residues etc.) do not apply exactly in the same way.''}

\smallbreak

Thus, finding another way of applying residues and differentials for codes on surfaces must be interesting.
This is the purpose of the second part of this paper, which starts with the presentation of a construction of codes using residues of differential $2$-forms on surfaces.
Then, connections between these codes and the functional ones are studied. We proves that any differential code is included in the orthogonal of a functional one but that the reverse inclusion is false, which is an important difference with the theory of codes on curves.
Notice that Voloch and Zarzar suggested the existence of such a difference in \cite{agctvoloch} section 3 without proving it.
Finally, we prove that, as for codes on curves, a differential code can always be regarded as a functional one associated with some parameters depending on a canonical divisor.  

\subsection*{Contents}
The first part contains sections \ref{onentwo} to \ref{RF}.
In section \ref{onentwo}, we recall the definition of one-codimensional residues along a curve $C$ of a differential $2$-form $\omega$ having $C$ as a simple pole.
Then, we define naturally the two-codimensional residue of $\omega$ along $C$ at a smooth point $P\in C$ to be the residue at $P$ of the one-codimensional residue.
In section \ref{lolo}, we study Laurent series expansions in two variables, in order to have a combinatorial definition for residues, which will be more convenient for computations.
In section \ref{general}, we introduce new definitions of one- and two-codimensional residues holding for any rational $2$-form.
Then, we prove that the two-codimensional residue at a point $P$ along a curve $C\ni P$ of a rational $2$-form does not depend on the choice of local coordinates.
In section \ref{propres}, we study some properties of one- and two-codimensional residues.
In section \ref{secsing}, we define two-codimensional residues along a curve at a singular point of it.
Finally, section \ref{RF} contains three statements about summations of residues.

The second part contains sections \ref{codesoncurves} to \ref{examples}.
Section \ref{codesoncurves} is a quick review on the theory of codes on curves.
In section \ref{codesonsurfaces}, after a brief overview on functional codes on higher-dimensional varieties, we define differential codes on surfaces.
Then, properties of these codes and their relations with functional ones are studied in section \ref{propcodes}. Particularly, we prove that a differential code is contained in the orthogonal of a functional one.
Finally, section \ref{examples} proves that the reverse inclusion may be false by treating the elementary example of the surface $\P^1 \times \P^1$.

\part{Residues of a rational $2$-form on a smooth surface}\label{partieres}

\section*{Notations}

For any irreducible variety $X$ over a field $k$, we denote by $k(X)$ its function field.
If $Y$ is a closed irreducible subvariety of $X$, then the local ring (resp. its maximal ideal) of regular functions \textit{in a neighborhood of $Y$},  that is functions which are regular in at least one point of $Y$, is denoted by 
$\mathcal{O}_{X,Y}$ (resp. $\mathfrak{m}_{X,Y}$).
The $\mathfrak{m}_{X,Y}$-adic completion of the ring $\mathcal{O}_{X,Y}$ is denoted by $\widehat{\mathcal{O}}_{X,Y}$ and its maximal ideal $\mathfrak{m}_{X,Y}\widehat{\mathcal{O}}_{X,Y}$ by $\widehat{\mathfrak{m}}_{X,Y}$.
For any function $u\in \mathcal{O}_{X,Y}$, we denote by $\bar{u}$ its restriction to $Y$.
Recall that, if $Y$ has codimension one in $X$ and is not contained in the singular locus of $X$, then $\mathcal{O}_{X,Y}$ is a discrete valuation ring with residue field $k(Y)$. In this situation, the valuation along $Y$ is denoted by $val_Y$.
Finally, we denote by $\Omega^i_{k(X)/k}$ the space of $k$-rational differential $i$-forms on $X$.


\section{One and two-codimensional residues}\label{onentwo}

\subsection*{Context}
In this section, $k$ denotes an arbitrary field of arbitrary characteristic and $S$ a smooth geometrically integral quasi-projective surface over $k$. Moreover, $C$ denotes an irreducible geometrically reduced curve embedded in $S$ and $P$ a smooth rational point of $C$.
  
\subsection{First definitions for residues}
Given a $2$-form $\omega \in \Omega^2_{k(S)/k}$, one can construct two objects called \textit{residues} in the literature.
The first one is a rational $1$-form on a curve embedded in $S$ and the second one is an element of $k$ (or of some finite extension of it). Their definitions will be the respective purposes of definitions \ref{dinv1res} and \ref{2res}.
We first need next proposition, asserting the well-definition of one-condimensional residues (definition \ref{dinv1res}).

\begin{prop}\label{inv1res}
Let $v$ be a uniformizing parameter of $\OC$ and
 $\omega$ be a rational $2$-form on $S$ having $\mC$-valuation greater than or equal to $-1$. Then, there exists $\eta_1 \in \Omega^1_{k(S)/k}$ and $\eta_2 \in \Omega^2_{k(S)/k}$, both regular in a neighborhood of $C$ and such that
\begin{equation}\label{decdec}
\omega=\eta_1\wedge \frac{\displaystyle dv}{\displaystyle v}+\eta_2.
\end{equation}

\noindent Moreover, the differential form  ${\eta_1}_{|C}\in \Omega_{k(C)/k}^1$ is unique and depends neither on the choice of $v$ nor on that of the decomposition (\ref{decdec}).
\end{prop}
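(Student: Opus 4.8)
The existence part of the statement is the easy half: since $\mathcal{O}_{S,C}$ is a discrete valuation ring (as $C$ has codimension one in the smooth surface $S$) with uniformizer $v$, I would argue that $\omega$ can be written locally near $C$ as $\omega = f\,\frac{dv}{v}\wedge dw + (\text{regular part})$ for a suitable $w$ completing $v$ to a local system of parameters, where $f\in\mathcal{O}_{S,C}$. More precisely, fix any $w$ with $dv, dw$ forming a basis of $\Omega^1_{k(S)/k}$ over $k(S)$ in a neighborhood of $C$; then $\omega = g\, dv\wedge dw$ for some $g\in k(S)$ with $\mathrm{val}_C(g)\ge -1$ by hypothesis on the $\mathfrak{m}_{S,C}$-valuation of $\omega$. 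Writing $g = f/v + h$ with $f,h\in\mathcal{O}_{S,C}$ (possible since $\mathrm{val}_C(g)\ge-1$), one gets the decomposition (\ref{decdec}) with $\eta_1 = f\,dw$ and $\eta_2 = h\,dv\wedge dw$, both regular near $C$.

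For uniqueness, suppose $\eta_1\wedge\frac{dv}{v}+\eta_2 = \eta_1'\wedge\frac{dv'}{v'}+\eta_2'$ for two uniformizers $v,v'$ and regular forms $\eta_i,\eta_i'$. The key relation is $v' = uv$ for a unit $u\in\mathcal{O}_{S,C}^\times$, so $\frac{dv'}{v'} = \frac{dv}{v} + \frac{du}{u}$, and since $u$ is a unit, $\frac{du}{u}$ is regular near $C$. Substituting, the difference $(\eta_1-\eta_1')\wedge\frac{dv}{v}$ equals $\eta_2' - \eta_2 + \eta_1'\wedge\frac{du}{u}$, which is regular near $C$; hence $(\eta_1-\eta_1')\wedge\frac{dv}{v}$ has non-negative $\mathfrak{m}_{S,C}$-valuation. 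Writing $\eta_1-\eta_1' = a\,dv + b\,dw$ in the basis above with $a,b\in k(S)$, we find $(\eta_1-\eta_1')\wedge\frac{dv}{v} = -\frac{b}{v}\,dv\wedge dw$ being regular forces $\mathrm{val}_C(b)\ge 1$, i.e. $b\in\mathfrak{m}_{S,C}$, so $\overline{b}=0$ in $k(C)$. Therefore $\overline{\eta_1-\eta_1'} = \overline{a}\,d\overline{v}$ on $C$; it remains to kill the $d\overline{v}$-component. For this I would pair with $\frac{dv}{v}$ once more, or more cleanly observe that $\eta_1\wedge\frac{dv}{v}$ only sees $\eta_1$ modulo multiples of $dv$, so without loss of generality the $dv$-component of $\eta_1$ is irrelevant: replacing $\eta_1$ by $\eta_1 - (\text{its }dv\text{-component})$ changes $\eta_1\wedge\frac{dv}{v}$ by $(\cdots)\,dv\wedge\frac{dv}{v}=0$ and keeps it regular, so we may assume $\eta_1 = f\,dw$, $\eta_1' = f'\,dw'$; then restricting to $C$ and using $\overline{b}=0$ gives $\overline{f}\,d\overline{w} = \overline{f'}\,d\overline{w'}$ in $\Omega^1_{k(C)/k}$, which is exactly ${\eta_1}_{|C} = {\eta_1'}_{|C}$.

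The main obstacle is the bookkeeping around the $dv$-component of $\eta_1$: since $\eta_1$ itself is only defined up to an $\mathcal{O}_{S,C}$-multiple of $dv$ (wedging with $\frac{dv}{v}$ annihilates it), one must be careful that the claimed invariant is ${\eta_1}_{|C}$ as an element of $\Omega^1_{k(C)/k}$ and not $\eta_1$ itself, and that $d\overline{v}$ may be zero in $\Omega^1_{k(C)/k}$ (e.g. in positive characteristic if $v$ restricts to a $p$-th power locally) — which is actually harmless but needs to be acknowledged. I would also need to check that passing to $\overline{\eta_1}$ commutes appropriately with the wedge decomposition, i.e. that the restriction map $\Omega^1_{k(S)/k}\supseteq(\text{regular forms near }C)\to\Omega^1_{k(C)/k}$ is well-defined on the regular forms, which follows from standard properties of Kähler differentials and the surjection $\mathcal{O}_{S,C}\twoheadrightarrow k(C)$. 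A cleaner alternative for the whole uniqueness argument is to pass to the completion $\widehat{\mathcal{O}}_{S,C}\cong k(C)[[v]]$ (using that $C$ is not in the singular locus, so this completion is a formal power series ring over the residue field), where the decomposition and the computation of ${\eta_1}_{|C}$ become transparent; I expect the author takes one of these two routes.
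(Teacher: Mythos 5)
Your proof is correct and takes essentially the same route as the paper: the existence part is the same computation in the one-dimensional $k(S)$-vector space $\Omega^2_{k(S)/k}$ (the paper writes $\omega=f\mu\wedge\frac{dv}{v}$ and takes $\eta_1=f\mu$, $\eta_2=0$; your splitting $g=f/v+h$ is the same idea), and your direct uniqueness computation is exactly the argument of \cite{bhpv} II.4 that the paper cites, with your ``pass to the completion'' alternative being the paper's second proof via lemma \ref{polesimple}. One simplification you overlooked: $v$ is a uniformizing parameter of $\OC$, hence vanishes identically on $C$, so $\bar{v}=0$ and $d\bar{v}=0$ in $\Omega^1_{k(C)/k}$; thus the $dv$-component of $\eta_1$ automatically contributes nothing to ${\eta_1}_{|C}$, and your concern about $\bar{v}$ being a $p$-th power is moot.
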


\begin{proof} We first prove the existence of a decomposition (\ref{decdec}).
Recall that
\begin{equation}\label{dimform}
\dim_{k(S)}\Omega^1_{k(S)/k}=2\quad \textrm{and}\quad
\dim_{k(S)}\Omega^2_{k(S)/k}=1,
\end{equation}
(see \cite{sch1} thm III.5.4.3).
Consequently, there exists a rational $1$-form $\mu$, which is non-$k(S)$-colinear with $\frac{dv}{v}$. Thus, $\mu\w \frac{dv}{v}\neq 0$. From (\ref{dimform}), there exists also a unique function $f\in k(S)$ satisfying
$$
\omega=f\mu\w \frac{dv}{v}.
$$
 
\noindent Since $val_C(\omega)\geq -1$, the $1$-form $f\mu$ has no pole along $C$. We obtain a decomposition (\ref{decdec}) by setting $\eta_1:=f\mu$ and $\eta_2:=0$.

Obviously, this decomposition is far from being unique. Only ${\eta_1}_{|C}$ is unique.
To prove uniqueness and independence of ${\eta_1}_{|C}$ under the choice of $v$, see \cite{bhpv} II.4.
Even if this book only deals with complex surfaces, the very same proof holds for surfaces over an arbitrary field.
\end{proof}

\begin{rem}
  Another proof of proposition \ref{inv1res} will be given in section \ref{general} in a more general context (see lemma \ref{polesimple}).
\end{rem}

\begin{defn}\label{dinv1res}
  Under the assumptions of proposition \ref{inv1res} and given a decomposition of the form (\ref{decdec}) for $\omega$, the $1$-form ${\eta_1}_{|C}\in \Omega^1_{k(C)/k}$ is called the one-codimensional residue (or the $1$-residue) of $\omega$ along $C$ and denoted by
$$
\res^1_C (\omega):={\eta_1}_{|C}.
$$ 
\end{defn}

\begin{defn}\label{2res}
Under the assumptions of proposition \ref{inv1res},
let $P$ be a $k$-rational point of $C$. The two-codimensional residue (or the $2$-residue) of $\omega$ at $P$ along $C$ is the residue at $P$ of the $1$-residue of $\omega$ along $C$. That is
$$
\res^2_{C,P}(\omega):= \res_P (\res^1_{C,P} (\omega)).
$$  
\end{defn}

Notice that to define residues in this way, $\omega$ needs to have valuation greater than or equal to $-1$ along $C$.
However, two-codimensional residues can actually be defined for any rational differential form even if it has a multiple pole along $C$.
This will be the purpose of sections \ref{lolo} to \ref{propres}.

\begin{rem}
It would have been natural to define $2$-residues at a closed point $P$ of $C$. Nevertheless, we decided to keep a more geometric point of view, even if the base field is not supposed to be algebraically closed.
Notice that any geometric point of $S$ (i.e. a closed point of $S\times_k \bar{k}$) is a rational point of $S\times_k L$ for a suitable finite scalar extension $L/k$. Consequently, if we define residues at rational points of $S$, it is easy to extend this definition to geometric points using such a scalar extension.
The only arithmetic statement we will need in the second part of the present paper is that, if $C$ is defined over $k$ and $P\in C(k)$, then the $2$-residue along $C$ at $P$ of a $k$-rational $2$-form is in $k$.
That is why we keep considering non-algebraically closed fields in sections \ref{onentwo} to \ref{general} and \ref{secsing}.

However, in sections \ref{propres} and \ref{RF}, when we deal with properties of residues and particularly with summations of them, we work over an algebraically closed field.
\end{rem}

\section{Laurent series in two variables}\label{lolo}

As is well-known, the residue at a point $P$ on a curve $C$ of a $1$-form can be computed using Laurent series expansions.
The residue of a differential form at a point $P$ is the coefficient of degree $-1$ of its Laurent series expansion. We look for a similar definition in the two-dimensional case. For this purpose, we introduce Laurent series in two variables.  

\subsection*{Context}
The context of this section is exactly that of section \ref{onentwo} (see page \pageref{onentwo}).


\subsection{Laurent series expansion, the first construction}\label{laoukialediagramme}

Recall that, $C$ is assumed to be a geometrically reduced irreducible curve over $k$ embedded in $S$ and $P$ a smooth rational point of $C$.

\begin{defn}
  A pair $(u,v) \in \mathcal{O}_{S,P}^2$ is said to be a strong $(P,C)$-pair if the following conditions are satisfied.
\begin{enumerate}
\item $(u,v)$ is a system of local parameters at $P$.
\item $v$ is a uniformizing parameter of $\OC$.
\end{enumerate}
\end{defn}

 
\begin{lem}\label{kuv}
Let $(u,v)$ be a strong $(P,C)$-pair, then
there exists a morphism $\phi: k(S) \hookrightarrow k((u))((v))$ sending
$\OP$ into $k[[u,v]]$ and
$\OC$ into $k((u))[[v]]$.
\end{lem}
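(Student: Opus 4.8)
The plan is to construct the embedding $\phi$ in two stages, exploiting the completions introduced in the Notations section. First I would pass to the completion $\hOP = \widehat{\mathcal{O}}_{S,P}$. Since $(u,v)$ is a system of local parameters at the smooth point $P$ of the surface $S$, the Cohen structure theorem gives a $k$-algebra isomorphism $\hOP \cong k[[u,v]]$; here one uses that $P$ is a $k$-rational point, so the residue field is $k$ and there is no separability obstruction to lifting it into $\hOP$. This yields a canonical injection $\OP \hookrightarrow \hOP \cong k[[u,v]]$, which will be the source of the first containment. Taking fraction fields, $k(S) = \mathrm{Frac}(\OP)$ embeds into $\mathrm{Frac}(k[[u,v]])$, and it remains to land that fraction field inside the iterated Laurent field $k((u))((v))$.

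The second stage is to identify $\mathrm{Frac}(k[[u,v]])$ with a subfield of $k((u))((v))$ compatibly with the valuation along $C$. The key observation is that $v$ is a uniformizing parameter of $\OC$, so localizing $k[[u,v]]$ at the height-one prime $(v)$ and completing gives $\hOC \cong k((u))[[v]]$: indeed $\OC$ is a DVR with uniformizer $v$ and residue field $k(C)$, and under $\OP \hookrightarrow k[[u,v]]$ the image of $\OC$ lies in the localization $k[[u,v]]_{(v)}$, whose $(v)$-adic completion is $k((u))[[v]]$ because $k[[u,v]]/(v) = k[[u]]$ has fraction field $k((u))$. Passing to fraction fields once more, $k(S) \hookrightarrow \mathrm{Frac}(k((u))[[v]]) = k((u))((v))$. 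Chasing an element of $\OP$ through both completions shows it lands in $k[[u,v]]$, and an element of $\OC \subset k(S)$ (a rational function regular at the generic point of $C$) lands in $k((u))[[v]]$, which gives exactly the two asserted containments.

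The main technical point to verify carefully is the compatibility of the two completions: that the composite $\OP \hookrightarrow \hOP \cong k[[u,v]]$, followed by $k[[u,v]] \hookrightarrow k[[u,v]]_{(v)} \hookrightarrow k((u))[[v]]$, extends to the completion $\hOC$ and agrees with the $\mC$-adic topology on $\OC$ — equivalently, that the diagram of local rings $\OP \to \OC$, $\hOP \to \hOC$ commutes with the structure-theorem identifications. This is where one checks that $v$ really generates the maximal ideal of $\OC$ inside the image of $k[[u,v]]$ and that reduction mod $v$ recovers $k(C) \hookrightarrow k((u))$. Everything else — injectivity of $\phi$ (automatic, since $k(S)$ is a field and $\phi$ is a nonzero ring map), $k$-linearity, and the stated images of $\OP$ and $\OC$ — then follows formally. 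I would expect the embedding $k(C) \hookrightarrow k((u))$ (i.e. that the residue extension of $\OC$ embeds in $k((u))$ via $\bar u$) and the identification $\hOC \cong k((u))[[v]]$ to be the steps requiring the most care; the rest is routine commutative algebra.
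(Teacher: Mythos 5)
Your proposal is correct and follows essentially the same route as the paper: embed $\OP$ into $k[[u,v]]$ via Taylor expansion (Cohen at the smooth rational point $P$), identify $\OC$ with the localization of $\OP$ at $(v)$, pass to the $(v)$-adic completion to land in $\widehat{k[[u,v]]}_{(v)}\cong k((u))[[v]]$, and finish with fraction fields. The compatibility of the two completions that you flag as the delicate point is exactly what the paper's commutative diagram encodes.
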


\begin{proof}
We will prove the existence of $\phi_0: \OC \hookrightarrow k((u))[[v]]$ entailing that of $\phi$, thanks to the universal property of fraction fields.
From \cite{sch1} II.2, any element of $\OP$ has a unique Taylor series expansion in the variables $u,v$.
Then, notice that $\OC$ and ${\OP}_{(v)}$ are isomorphic and consider the following diagram.
$$
\xymatrix{\relax \OP  \ar@{^{(}->}[d] \ar[r]_{\textrm{loc}} &
\OC \ar[r]_{\textrm{comp}} \ar@{^{(}->}[d]_{\exists !}
& \hOC \ar[d]_{\exists !} \\
 k[[u,v]] \ar[r]^{\textrm{loc}} 
 & k[[u,v]]_{(v)} \ar[r]^{\textrm{comp}}
 & \widehat{k[[u,v]]}_{(v)}.
}
$$

\noindent The horizontal arrows in the left hand square correspond to localizations, the ones in the right hand square correspond to $(v)$-adic completions.
Vertical arrows are obtained by applying respectively universal properties of localization and completion.
We now have to prove that $\widehat{k[[u,v]]}_{(v)}$ is isomorphic to $k((u))[[v]]$, which is a consequence of Cohen's structure theorem (see \cite{eis} thm 7.7 or \cite{cohen} thm 9 for an historical reference).

  
\end{proof}



\subsection{Laurent series, the second construction}\label{second}
Let $(u,v)$ be a strong $(P,C)$-pair.
Cohen's structure theorem asserts that $\hOC$ is isomorphic to $k(C)[[v]]$.
Unfortunately, this isomorphism is not always unique. Indeed, \cite{cohen} thm 10(c) asserts that, if $\textrm{Char}(k)>0$, then there are infinitely many subfields of $\hOC$ which are isomorphic to the residue field $k(C)$.
Therefore, to use this isomorphism for Laurent series expansions, we have to choose a representant of $k(C)$ which is, in some sense, related to $u$.

\begin{prop}[The field $\mathcal{K}_u$]\label{Ku}
Let $u\in \OC$ whose restriction $\bar{u}$ to $C$ is a separating element (see \cite{sti} p. 127 for a definition) of $k(C)/k$. Then, there exists a unique subfield $\mathcal{K}_u \subset \hOC$ containing $k(u)$ and isomorphic to $k(C)$ under the morphism $\hOC \rightarrow \hOC / \widehat{\mathfrak{m}}_{S,C} $. Furthermore, this field is generated over $k(u)$ by an element $y\in \hOC$. 
\end{prop}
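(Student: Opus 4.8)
The plan is to obtain $\mathcal{K}_u$ by lifting, through Hensel's lemma, a primitive element of $k(C)$ over $k(\bar u)$, and then to deduce uniqueness of $\mathcal{K}_u$ from the uniqueness clause of Hensel's lemma. By the Notations section, $\hOC$ is a complete discrete valuation ring whose residue field, via the quotient map $\hOC\to\hOC/\widehat{\mathfrak{m}}_{S,C}$, is $k(C)$; in particular $\hOC$ is Henselian. First I would note that $k(u)$ really sits inside $\hOC$: a separating element is in particular transcendental over $k$, so for every nonzero $p\in k[T]$ the residue $p(\bar u)$ of $p(u)$ is nonzero in $k(C)$, hence $p(u)$ is a unit of the local ring $\hOC$; therefore $k(u)\subseteq\hOC$, and the reduction map restricts to an isomorphism $\sigma\colon k(u)\to k(\bar u)$ onto the subfield $k(\bar u)\subseteq k(C)$.

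Since $\bar u$ is separating, $k(C)/k(\bar u)$ is finite and separable, so by the primitive element theorem $k(C)=k(\bar u)(\bar y)$ for some $\bar y$, with monic minimal polynomial $f\in k(\bar u)[T]$ of degree $d=[k(C):k(\bar u)]$ satisfying $f'(\bar y)\neq 0$. Let $\tilde f\in k(u)[T]\subseteq\hOC[T]$ be the monic polynomial whose coefficients are the images under $\sigma^{-1}$ of those of $f$, so that reducing the coefficients of $\tilde f$ recovers $f$. Choosing any lift $y_0\in\hOC$ of $\bar y$, one has $\tilde f(y_0)\in\widehat{\mathfrak{m}}_{S,C}$ while $\tilde f'(y_0)$ reduces to $f'(\bar y)\neq 0$ and is thus a unit; Hensel's lemma then yields a unique $y\in\hOC$ with $\tilde f(y)=0$ and $y\equiv y_0 \pmod{\widehat{\mathfrak{m}}_{S,C}}$. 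Set $\mathcal{K}_u:=k(u)[y]$. A degree count shows $\tilde f$ is irreducible over $k(u)$ --- a proper monic factorization of $\tilde f$ would reduce to one of the irreducible monic $f$ --- so $\mathcal{K}_u\cong k(u)[T]/(\tilde f)$ is a field; its image under the reduction map is a subfield of $k(C)$ containing $k(\bar u)$ and $\bar y$, hence all of $k(C)$, so the reduction restricts to an isomorphism $\mathcal{K}_u\cong k(C)$. By construction $\mathcal{K}_u$ is generated over $k(u)$ by $y$, giving the last assertion.

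For uniqueness, let $\mathcal{K}'\subseteq\hOC$ be another subfield containing $k(u)$ on which the reduction map is an isomorphism $\rho'$ onto $k(C)$, and let $y'\in\mathcal{K}'$ be the unique element with $\rho'(y')=\bar y$. As $\rho'$ restricts to $\sigma$ on $k(u)$, applying $(\rho')^{-1}$ to $f(\bar y)=0$ and comparing coefficients shows that $y'$ is again a root of $\tilde f$ with $y'\equiv y_0 \pmod{\widehat{\mathfrak{m}}_{S,C}}$; the uniqueness in Hensel's lemma then forces $y'=y$, and, since $[\mathcal{K}':k(u)]=[k(C):k(\bar u)]=d=\deg\tilde f$, we get $\mathcal{K}'=k(u)[y']=k(u)[y]=\mathcal{K}_u$. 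The one point I would be careful about is precisely this bookkeeping --- that the polynomial satisfied by the distinguished lift in any competing coefficient field is literally $\tilde f$, so that Hensel uniqueness applies --- with the separability of $\bar u$ entering exactly to make $\bar y$ a simple root of its minimal polynomial; the remaining ingredients (Henselianness of $\hOC$, the primitive element theorem, the degree count) are routine.
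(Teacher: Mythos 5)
Your proof is correct and follows essentially the same route as the paper: primitive element theorem for $k(C)/k(\bar u)$, Hensel's lemma to lift $\bar y$ to a root $y$ of the lifted minimal polynomial, $\mathcal{K}_u:=k(u)[y]$, and uniqueness of $\mathcal{K}_u$ deduced from the uniqueness of the Hensel lift. You merely spell out details the paper leaves implicit (why $k(u)\subseteq\hOC$, irreducibility of the lifted polynomial, and the bookkeeping showing a competing coefficient field's distinguished lift satisfies the same polynomial).
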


\begin{proof}
The extension $k(C)/k(\bar{u})$ is finite and separable. Thus, from the primitive element theorem, there exists a function $\bar{y}\in k(C)$ generating $k(C)$ over $k(\bar{u})$.  
From Hensel's lemma, $\bar{y}$ lifts to an element $y\in \hOC$ and the subring $\mathcal{K}_u:=k(u)[y]\subset \hOC$ is the expected copy of $k(C)$.
The uniqueness of $\mathcal{K}_u$ is a consequence of the uniqueness of the Hensel Lift $y$ of $\bar{y}$.
\end{proof}

\begin{cor}\label{Ku[[v]]}
  Under the assumptions of proposition \ref{Ku}, any element $f\in \hOC$ has a unique expansion in $\mathcal{K}_u[[v]]$.
\end{cor}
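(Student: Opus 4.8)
The plan is to deduce Corollary \ref{Ku[[v]]} directly from Cohen's structure theorem together with the existence and uniqueness of the field $\mathcal{K}_u$ established in Proposition \ref{Ku}. The key point is that $\hOC$ is a complete discrete valuation ring with uniformizer $v$ and residue field $k(C)$, and $\mathcal{K}_u$ is a coefficient field, i.e.\ a subfield of $\hOC$ mapping isomorphically onto the residue field under the reduction map $\hOC \to \hOC/\widehat{\mathfrak{m}}_{S,C}$.

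First I would recall the equicharacteristic case of Cohen's structure theorem: a complete discrete valuation ring containing a field and admitting a coefficient field $\mathcal{K}$ is isomorphic, as a $\mathcal{K}$-algebra, to the formal power series ring $\mathcal{K}[[v]]$ in the uniformizer $v$. Applying this with $\mathcal{K} = \mathcal{K}_u$ (whose existence as a coefficient field is exactly the content of Proposition \ref{Ku}), one gets a $\mathcal{K}_u$-algebra isomorphism $\hOC \xrightarrow{\sim} \mathcal{K}_u[[v]]$. Concretely, the expansion of $f \in \hOC$ is built by successive approximation: reduce $f$ modulo $\widehat{\mathfrak{m}}_{S,C}$ to get $a_0 \in k(C)$, lift it canonically into $\mathcal{K}_u$, subtract, observe the difference lies in $\widehat{\mathfrak{m}}_{S,C} = v\hOC$, divide by $v$, and iterate; completeness guarantees the resulting series $\sum_{i\ge 0} a_i v^i$ with $a_i \in \mathcal{K}_u$ converges to $f$.

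For uniqueness I would argue that if $\sum a_i v^i = \sum b_i v^i$ in $\hOC$ with all coefficients in $\mathcal{K}_u$, then $a_0 - b_0 = \sum_{i\ge 1}(b_i - a_i)v^i \in \widehat{\mathfrak{m}}_{S,C}$; since $\mathcal{K}_u$ meets $\widehat{\mathfrak{m}}_{S,C}$ only in $0$ (it maps isomorphically to the residue field), we get $a_0 = b_0$, and then cancel $v$ (valid since $\hOC$ is a domain) and induct. The main — indeed the only — obstacle is making sure the invocation of Cohen's theorem is the equicharacteristic version and that $\mathcal{K}_u$ genuinely serves as the coefficient field; but Proposition \ref{Ku} was tailored precisely to supply this, so the corollary is essentially a restatement, and the proof amounts to citing Proposition \ref{Ku} and the structure theorem and sketching the successive-approximation argument above.
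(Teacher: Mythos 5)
Your proof is correct and follows essentially the same route as the paper: existence via successive approximation (lift the residue class into $\mathcal{K}_u$, subtract, divide by $v$, iterate), and uniqueness from the fact that $\mathcal{K}_u$ meets $\widehat{\mathfrak{m}}_{S,C}$ only in $0$. The extra appeal to the equicharacteristic Cohen structure theorem is harmless but not needed once Proposition \ref{Ku} supplies the coefficient field; the iteration you sketch is exactly the paper's argument.
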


\begin{proof}
\textbf{Existence.} Let $f$ be an element of $\hOC$ and 
$f_0$ be the Hensel-lift in $\mathcal{K}_u$ of $f \mod\ \widehat{\mathfrak{m}}_{S,C}$. The $\widehat{\mathfrak{m}}_{S,C}$-adic valuation of $f-f_0$ is greater than or equal to one. By induction, using the same reasoning on $v^{-1}(f-f_0)$, we obtain an expansion $f=f_0+f_1v+\cdots$ for $f$.

\noindent \textbf{Uniqueness.}
Assume that $f$ has two distinct expansions $\sum_j f_j v^j$ and $\sum_j \widetilde{f}_j v^j$ in $\mathcal{K}_u[[v]]$. Let $j_0$ be the smallest integer such that $f_{j_0}\neq \widetilde{f}_{j_0}$.
From proposition \ref{Ku}, a nonzero element of $\mathcal{K}_u\subset \hOC$ has $\widehat{\mathfrak{m}}_{S,C}$-adic valuation zero. Consequently, $0$ has $\widehat{\mathfrak{m}}_{S,C}$-adic valuation $j_0$ which is absurd.
\end{proof}


The second Laurent series expansion using Cohen's structure theorem needs weaker conditions on the pair $(u,v)$. Thus, before we define it, we give a new definition.


\begin{defn}
A pair $(u,v)\in \OC^2$ is said to be a weak $(P,C)$-pair if $\bar{u}$ is a uniformizing parameter of $\mathcal{O}_{C,P}$ and  $v$ is a uniformizing parameter of $\OC$. 
\end{defn}

\begin{rem}
Obviously, a strong $(P,C)$-pair is weak, but the converse statement is false (see next example).  
\end{rem}

\begin{exmp}
Assume that $S$ is the affine plane over $\C$, the curve $C$ is the line of equation $y=0$ and $P$ is the origin. Set $u:= \frac{(x+y)(x-y)}{x}$ and $v:=xy$. Then, $(u,v)$ is a weak $(P,C)$-pair which is not strong.  
\end{exmp}

\noindent Now, we can define the second way of Laurent series expansion.

\begin{lem}\label{kuv2}
Given a weak $(P,C)$-pair $(u,v)$, there is an injection
$
\varphi:\ k(S)\hookrightarrow k((u))((v))
$
sending $\OC$ in $k((u))[[v]]$.   
\end{lem}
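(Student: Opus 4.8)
The plan is to reduce Lemma \ref{kuv2} to Lemma \ref{kuv} by exploiting that a weak $(P,C)$-pair differs from a strong one only in its behaviour at $P$, whereas the target inclusion $\OC \hookrightarrow k((u))[[v]]$ only sees the valuation along $C$. First I would observe that, since $v$ is a uniformizing parameter of $\OC$, the completion $\hOC$ is a complete discrete valuation ring. The element $\bar u \in k(C)$ is nonzero (being a uniformizing parameter of $\mathcal{O}_{C,P}$, it is in particular not zero in the residue field $k(C)$ of $\OC$), so $u$ has valuation $0$ along $C$, hence $u \in \OC^\times$ modulo $\widehat{\mathfrak m}_{S,C}$ maps to a nonzero element of $k(C)$.

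The construction then goes as follows. Pick any strong $(P,C)$-pair $(u_0, v)$ with the same second coordinate $v$; such a pair exists because one may always complete $v$ to a system of local parameters at $P$ (this uses that $S$ is smooth at $P$ and $C$ is smooth at $P$, so $v$, being a uniformizer of $\OC$, has nonzero image in $\mathfrak m_{S,P}/\mathfrak m_{S,P}^2$ and can be extended to a basis). Apply Lemma \ref{kuv} to $(u_0,v)$ to get $\phi_0 : k(S) \hookrightarrow k((u_0))((v))$ sending $\OC$ into $k((u_0))[[v]]$. Now $\bar u \in k(C)$ and $\bar{u}_0$ is a uniformizing parameter of $\mathcal{O}_{C,P}$, so by the theory of Laurent expansions on the curve $C$ at the point $P$, $\bar u$ has a Laurent expansion $\sum_{i \geq N} c_i \bar{u}_0^{\,i} \in k((\bar u_0))$, and since $\bar u$ is itself a uniformizer of $\mathcal{O}_{C,P}$ this expansion starts in degree exactly $1$ with an invertible leading coefficient; hence $k((\bar u_0)) = k((\bar u))$, i.e. the two Laurent-series fields over $C$ at $P$ coincide. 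Lifting this, $u \in \hOC$ has an expansion in $k((\bar u_0))[[v]]$ under $\phi_0$, so $\phi_0(u) \in k((u_0))[[v]]$ is a unit times $u_0$ plus higher $v$-order terms, whence $k((u_0))[[v]] = k((\phi_0(u)))[[v]]$ as subrings of $k((u_0))((v))$. Composing $\phi_0$ with the induced isomorphism $k((u_0))((v)) \xrightarrow{\ \sim\ } k((u))((v))$ that renames the variable $u_0 \mapsto u$ (legitimate precisely because $\phi_0(u)$ generates the same field as $u_0$ over $k$ inside $k((u_0))((v))$) yields the desired $\varphi : k(S) \hookrightarrow k((u))((v))$ sending $\OC$ into $k((u))[[v]]$.

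The main obstacle I expect is making rigorous the step ``$k((u_0))[[v]] = k((\phi_0(u)))[[v]]$ and one may rename $u_0$ to $u$''. Concretely one needs: (a) that $\phi_0(u)$, viewed in $k((u_0))((v))$, actually lies in $k((u_0))[[v]]$ with constant term (in $v$) a Laurent series in $u_0$ of valuation $1$ — this is where the hypothesis that $\bar u$ is a \emph{uniformizer} of $\mathcal{O}_{C,P}$, not merely a nonzero element, is used; and (b) that $k((u_0))[[v]]$, as a complete local ring with residue field $k((u_0))$, is generated topologically by $v$ over any coefficient field isomorphic to $k((u_0))$, so substituting the formal variable $u$ for $u_0$ gives a topological $k$-algebra isomorphism $k((u_0))((v)) \cong k((u))((v))$ restricting correctly on $\OC$. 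Both are essentially formal consequences of the structure theory of complete DVRs (Cohen's theorem, already invoked in Lemma \ref{kuv}), but the bookkeeping of which ring maps where is the delicate part; everything else is a routine transport of the diagram in the proof of Lemma \ref{kuv} through this change of first coordinate.
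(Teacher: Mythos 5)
Your reduction collapses at its very first step. You claim that $v$ can be completed to a system of local parameters at $P$ because ``$v$, being a uniformizer of $\OC$, has nonzero image in $\mathfrak{m}_{S,P}/\mathfrak{m}_{S,P}^2$.'' This is false: being a uniformizer of $\OC$ is a condition at the \emph{generic} point of $C$ only, so $v$ is a local equation of $C$ times a unit of $\OC$, and that unit may vanish at $P$ (or even fail to be regular there, since the definition only requires $v\in\OC$, not $v\in\OP$). The paper's own example of a weak pair that is not strong exhibits exactly this: $C=\{y=0\}$, $P$ the origin, $v=xy$, which lies in $\mathfrak{m}_{S,P}^2$ and hence cannot be part of any system of local parameters at $P$. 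So there is in general no strong $(P,C)$-pair of the form $(u_0,v)$, and the intended application of Lemma \ref{kuv} with the same second coordinate is unavailable. If you instead take an arbitrary strong pair $(u_0,v_0)$, the ``renaming'' becomes a full two-variable substitution $(u_0,v_0)\mapsto(u,v)$ whose well-definedness and invertibility is precisely the change-of-variables machinery of Lemma \ref{lemcv} and appendix \ref{apyval} --- which itself presupposes that both pairs already admit expansions, i.e.\ presupposes the lemma you are trying to prove. Note also that the renaming you need is not a relabelling of formal symbols: for the lemma to serve its purpose (Definition \ref{bordel}, Proposition \ref{sameexp}) the map must send the actual functions $u,v\in k(S)$ to the formal variables $u,v$, so the isomorphism $k((u_0))((v))\to k((u))((v))$ must be the substitution inverse of $\phi_0(u)=f(u_0,v)$, whose existence is exactly the nontrivial content being assumed.

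The paper's proof goes a different way and uses an input your argument never touches: since $C$ is geometrically reduced, $k(C)/k$ is separable, and since $\bar u$ is a uniformizer of $\mathcal{O}_{C,P}$, $d\bar u\neq 0$, so $\bar u$ is a \emph{separating element} of $k(C)/k$. This allows one to invoke Proposition \ref{Ku} to get the Hensel lift $\mathcal{K}_u\cong k(C)$ inside $\hOC$ and the expansion $\OC\hookrightarrow\mathcal{K}_u[[v]]$, and then to embed $\mathcal{K}_u\hookrightarrow k((u))$ via the $(\bar u)$-adic completion of $k(C)$, applied coefficientwise. If you want to salvage a reduction-style argument, you would at minimum have to replace the shared-$v$ trick by this kind of intrinsic construction along $C$; as written, the proposal does not establish the lemma.
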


\begin{proof}
As in the proof of lemma \ref{kuv}, we just have to prove the existence of a morphism $\varphi_0:\OC \hookrightarrow k((u))[[v]]$.
The curve $C$ is assumed to be geometrically reduced, thus from \cite{mumford} prop II.4.4 (i), the extension $k(C)/k$ is separable, hence has a separable transcendence basis.
Moreover, the function $\bar{u}$ is a uniformizing parameter of $\mathcal{O}_{C,P}$, thus its differential $d\bar{u}$ is nonzero and, from \cite{bou} thm V.16.7.5, it is a separating element of $k(C)/k$.
From proposition \ref{Ku}, there is an injection $\OC \hookrightarrow \mathcal{K}_u[[v]]$. Furthermore,
there is a natural extension $\mathcal{K}_u\hookrightarrow k((u))$,
coming from the $(\bar{u})$-adic completion of $k(C)\cong \mathcal{K}_u$. Applying this extension coefficientwise on $\mathcal{K}_u[[v]]$, we obtain the morphism $\varphi_0$.
\end{proof}

\noindent Next proposition links both Laurent series expansions.

\begin{prop}\label{sameexp}
If $(u,v)$ is a strong $(P,C)$-pair, then Laurent series expansions of lemmas \ref{kuv} and \ref{kuv2} are the same. That is $\phi=\varphi$. 
\end{prop}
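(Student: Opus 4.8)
The plan is to compare the two constructions through their defining universal properties, showing that the map $\varphi$ of lemma \ref{kuv2} satisfies the same uniqueness properties that pinned down $\phi$ in lemma \ref{kuv}. First I would recall that for a strong pair $(u,v)$, lemma \ref{kuv} builds $\phi$ by factoring through the $(v)$-adic completion $\hOC$, identified with $\widehat{k[[u,v]]}_{(v)}$ via the universal properties of localization and completion (as in the diagram) and then with $k((u))[[v]]$ via Cohen's structure theorem. The map $\varphi$, on the other hand, is built by first choosing the field $\mathcal{K}_u \subset \hOC$ of proposition \ref{Ku} and then applying the $(\bar u)$-adic completion $\mathcal{K}_u \hookrightarrow k((u))$ coefficientwise. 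So both maps are morphisms $\hOC \to k((u))[[v]]$, and it suffices to show they agree on $\hOC$, hence on $\OC$ and hence on $k(S)$ by the universal property of the fraction field.

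The key step is to check that the identification $\hOC \cong k((u))[[v]]$ underlying $\phi$ restricts on the coefficient field to the map $\mathcal{K}_u \hookrightarrow k((u))$ used for $\varphi$. Concretely: since $(u,v)$ is a strong pair, $u \in \OP$ and $\bar u$ is a uniformizing parameter of $\mathcal{O}_{C,P}$ (a strong pair is weak), so $\bar u$ is a separating element of $k(C)/k$ and proposition \ref{Ku} applies. Inside $k[[u,v]]$, the subfield $k((u)) \subset \widehat{k[[u,v]]}_{(v)} = k((u))[[v]]$ is precisely the set of elements with zero $(v)$-adic valuation that, modulo $(v)$, land in the image of $k[[u]] \hookrightarrow k[[u]][u^{-1}]$; and the Hensel lift $y \in \hOC$ of a primitive element $\bar y$ generating $k(C)/k(\bar u)$ maps, under $\phi$, to the Hensel lift of the same $\bar y$ inside $k((u))$ — this is where one uses that the Hensel lift is characterized by being a root of the (separable) minimal polynomial of $\bar y$ with the correct reduction mod $(v)$, a property preserved by $\phi$. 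Thus $\phi$ carries $\mathcal{K}_u = k(u)[y]$ into $k((u))$ compatibly with the completion map, so $\phi|_{\mathcal{K}_u} = (\mathcal{K}_u \hookrightarrow k((u)))$. Since both $\phi$ and $\varphi$ are $k[[v]]$-linear (they send $v \mapsto v$) and agree on the coefficient field $\mathcal{K}_u$, and since every element of $\hOC$ has a unique expansion in $\mathcal{K}_u[[v]]$ by corollary \ref{Ku[[v]]}, the two maps coincide on all of $\hOC$.

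The main obstacle I anticipate is the bookkeeping in the first paragraph of the comparison: making precise that $\phi$, as constructed via the diagram and Cohen's theorem, does send $u$ to $u$, $v$ to $v$, and restricts correctly on constants and on the residue field, since lemma \ref{kuv} leaves the isomorphism $\widehat{k[[u,v]]}_{(v)} \cong k((u))[[v]]$ somewhat implicit. The cleanest route is probably to characterize $\phi$ intrinsically: it is the unique $k$-algebra morphism $k(S) \to k((u))((v))$ sending $\OP$ into $k[[u,v]]$, sending $u,v$ to the coordinate variables $u,v$, and inducing on $\mathcal{O}_{C,P}$ (through the reduction mod $v$) the standard $\bar u$-adic expansion of $k(C)$; once one observes that $\varphi$ also satisfies this characterization, equality is immediate. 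One should also note the minor verification that the natural extension $\mathcal{K}_u \hookrightarrow k((u))$ of lemma \ref{kuv2} is exactly the $\bar u$-adic completion of $k(C)$ realized inside $k((u))$, so that it matches the "constant part" of Cohen's isomorphism — but this is essentially the definition of $\mathcal{K}_u$ combined with the uniqueness in proposition \ref{Ku}.
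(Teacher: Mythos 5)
Your proposal is correct and follows essentially the same route as the paper: reduce to comparing the two maps on $\mathcal{K}_u[[v]]$, observe both fix $u$ and $v$, and pin down the image of the Hensel lift $y$ as the unique root of the minimal polynomial of $\bar y$ in $k((u))[[v]]$ with the prescribed reduction modulo $(v)$, which must be the $\bar u$-adic expansion $\psi(u)$. The only quibble is your parenthetical description of $k((u))$ inside $k((u))[[v]]$ (it is simply the constant coefficients, not characterized by reduction mod $(v)$), but this is not load-bearing for the argument.
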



\begin{proof}

Consider again the diagram in \ref{laoukialediagramme} including the new expansion
$$
\xymatrix{\relax \OP  \ar[r] \ar[d] & 
\OC \ar[r] \ar[d] \ar@/^2pc/[rrr]^{\varphi_0} \ar[rrd] |!{[r];[dr]}\hole ^(.65){\phi_0\  }   & 
\hOC \ar[d] \ar[r]^{\sim}& \mathcal{K}_u[[v]] \ar[d]^{\gamma} \ar[r]^-{\delta} &
k((u))[[v]] \ar@{.>}[ld]_{\textrm{id}}\\
k[[u,v]] \ar[r] & k[[u,v]]_{(v)} \ar[r]
& \widehat{k[[u,v]]}_{(v)} \ar[r]^{\sim} & k((u))[[v]]
.}
$$ 
Maps $\gamma$ and $\delta$ correspond respectively to the first and the second expansion.
We have to prove that $\phi_0=\varphi_0$, which is equivalent with $\gamma=\delta$. 

Recall that, from proposition \ref{Ku}, the field $\mathcal{K}_u$ is generated over $k(u)$ by an element $y\in \hOC$.
Thus, a local morphism $\mathcal{K}_u[[v]] \rightarrow k((u))[[v]]$ is entirely determined by the images of $u,v$ and $y$.
Obviously, $\delta$ sends $u$ and $v$ respectively on themselves and from the commutativity of the left part of the diagram, so does $\gamma$. 
The only nonobvious part is to prove that $\gamma$ sends $y$ on $\psi(u)$, where $\psi(\bar{u})$ is the $(\bar{u})$-adic expansion of $\bar{y}$.

Let $F\in k(\bar{u})[T]$ be the minimal polynomial of $\bar{y}$ over $k(\bar{u})$.
The formal function $y$ is the unique root of $F$ in $\hOC$ whose class in the residue field $k(C)$ is $\bar{y}$. Therefore, the morphism $\gamma$ must send $y$ on the unique root of $F$ in $k((u))[[v]]$ which is congruent to $\psi(u)$ modulo $(v)$.
Moreover, $\psi(\bar{u})=\bar{y}$, then $F(\bar{u},\psi(\bar{u}))=0$, thus the formal series $F(X,\psi(X))\in k[[X]]$ is zero. Consequently, $F(u,\psi(u))$ is zero in $k((u))$, hence is zero in $k((u))[[v]]$.
Then, $\psi(u)$ is a root of $F(u,T)\in k((u))[[v]][T]$ whose class in the residue field $k((u))$ equals $\psi(u)$, such a root is unique. Thus, $\gamma(y)=\psi(u)$.
\end{proof}

\subsection{Change of coordinates}
In this subsection, we define
$1$- and $2$-residues of any differential $2$-form $\omega$ using weak $(P,C)$-pairs. These definitions hold even if $C$ is a multiple pole of $\omega$. Afterwards, we prove that the new definition of $2$-residue does not depend on the choice of a weak $(P,C)$-pair. For that, we must describe changes of weak $(P,C)$-pairs.

\begin{lem}\label{lemcv}
Let $(u,v)$ and $(x,y)$ be two weak $(P,C)$-pairs, then the Laurent series expansions of $u$ and $v$ in $k((x))[[y]]$ are of the form
\begin{equation}\label{cv}\tag{CV}
\left\{
\begin{array}{rclcl}
u & = & f(x,y) &\ \textrm{with} & 
\ f(x,0)\in xk[[x]]\smallsetminus x^2k[[x]]\\
v & = & g(x,y) &\ \textrm{with} & 
\ g\in yk((x))[[y]]\smallsetminus y^2k((x))[[y]].     
\end{array}
\right.
\end{equation}
\end{lem}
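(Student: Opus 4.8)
The plan is to exploit the two characterizations available for each pair. Since $(x,y)$ is a weak $(P,C)$-pair, Lemma \ref{kuv2} gives an embedding $k(S)\hookrightarrow k((x))[[y]]$, so $u$ and $v$ do have Laurent expansions $u=f(x,y)$, $v=g(x,y)$ in $k((x))[[y]]$. I must pin down where these series live. The key point is that the embedding sends $\OC$ into $k((x))[[y]]$, and that $u,v\in \OC$ by hypothesis; hence $f,g\in k((x))[[y]]$ with no negative powers of $y$. Moreover $v$ is a uniformizing parameter of $\OC$, i.e. $\mathrm{val}_C(v)=1$. Under the expansion, the $\widehat{\mathfrak{m}}_{S,C}$-adic valuation is read off as the order of vanishing in $y$ (this is exactly the content of Corollary \ref{Ku[[v]]}: the $y$-adic valuation on $k((x))[[y]]$ restricts to $\mathrm{val}_C$ on $\hOC$, since nonzero elements of $\mathcal{K}_x$ have valuation zero). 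Therefore $g\in yk((x))[[y]]\smallsetminus y^2k((x))[[y]]$, which is the second line of \eqref{cv}.

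For the first line I would look at the reduction modulo $(y)$, i.e. the image in $k((x))$. Reducing $u=f(x,y)$ mod $y$ gives $f(x,0)\in k((x))$, and by construction (Lemma \ref{kuv2}, Proposition \ref{Ku}) this is nothing but the $(\bar x)$-adic expansion $\psi(\bar u)$ of the restriction $\bar u\in k(C)$ in $k((x))=\widehat{k(C)}_{(\bar x)}$. Now $\bar x$ is a uniformizing parameter of $\mathcal{O}_{C,P}$ and so is $\bar u$ (both $(x,y)$ and $(u,v)$ are weak $(P,C)$-pairs, so $\bar u$ uniformizes $\mathcal{O}_{C,P}$ too); hence $\mathrm{val}_P(\bar u)=1$, which in the $(\bar x)$-adic completion says precisely $f(x,0)=\psi(\bar u)\in xk[[x]]\smallsetminus x^2k[[x]]$ — note in particular that $f(x,0)$ has no pole, i.e. lands in $k[[x]]$ and not merely $k((x))$, because $\mathrm{val}_P(\bar u)\ge 0$. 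That establishes the first line of \eqref{cv} and completes the lemma.

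I expect the main obstacle to be bookkeeping about which completion a given object lives in, rather than any real difficulty: one has to be careful that the $y$-adic valuation on $k((x))[[y]]$ genuinely computes $\mathrm{val}_C$ (so that ``$v$ uniformizes $\OC$'' translates to order exactly $1$ in $y$), and that the constant-term-in-$y$ map really does compute the $(\bar x)$-adic expansion of the restriction to $C$ (so that ``$\bar u$ uniformizes $\mathcal{O}_{C,P}$'' translates to order exactly $1$ in $x$). Both facts are immediate consequences of the constructions in Section \ref{lolo} — in particular of Corollary \ref{Ku[[v]]} and the definition of $\mathcal{K}_u$ — once one checks that $\bar x$ is a separating element of $k(C)/k$, which holds since $d\bar x\neq 0$ by the argument already used in the proof of Lemma \ref{kuv2}. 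No genuinely new idea is required; the lemma is essentially a translation of the weak-pair hypotheses through the two Laurent expansions.
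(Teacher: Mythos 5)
Your proposal is correct and follows essentially the same route as the paper: the expansion exists by Lemma \ref{kuv2}, the first line of (CV) follows because $\bar u$ and $\bar x$ are both uniformizers of $\mathcal{O}_{C,P}$ so the reduction mod $(y)$ of $f$ is the $(\bar x)$-adic expansion of a function of valuation one, and the second line follows because $v$ and $y$ are both uniformizers of $\OC$ (the paper says ``$v/y$ is a unit of $\OC$,'' which is exactly your statement that the $y$-adic order of $g$ is one). Your extra bookkeeping about completions and the separability of $\bar x$ is harmless but not a different argument.
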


\begin{proof}
Functions $\bar{u}$ and $\bar{x}$ are both uniformizing parameters in $\mathcal{O}_{C,P}$, thus $\bar{u}=f(\bar{x},0)\in \bar{x}k[[\bar{x}]]\smallsetminus \bar{x}^2k[[\bar{x}]]$.
Both functions $v$ and $y$ are uniformizing parameters of $\OC$, then $v/y$ is invertible in $\OC$, that is $v/y\in k((x))[[y]]^{\times}$.
\end{proof}


\section{General definition of two-codimensional residues}\label{general}

Laurent series have been introduced in section \ref{lolo} because they are useful for computations.
Using them, one can define $1$- and $2$-residues in a more general context.

\subsection*{Context}
The context of this section is exactly that of section \ref{onentwo} (see page \pageref{onentwo}).

\begin{defn}\label{bordel}
Let $\omega \in \Omega^2_{k(S)/k}$ and $(u,v)$ be a weak $(P,C)$-pair. Then, there exists an unique function $h\in k(S)$, such that $\omega=h du\w dv$ and $h$ has a Laurent series expansion $h=\sum_j h_j(u)v^j$.
\begin{enumerate}
\item The $(u,v)$-$1$-residue of $\omega$ along $C$ in a neighborhood of $P$ is defined by
$$(u,v)\res^1_{C,P}(\omega):= h_{-1}(\bar{u})d\bar{u}\ \in \Omega^1_{k(C)/k}.$$
\item The $(u,v)$-$2$-residue of $\omega$ at $P$ along $C$ is defined by
$$(u,v)\res^2_{C,P}(\omega):=\res_P((u,v)\res^1_{C,P}(\omega))=h_{-1,-1} \in k.$$
\end{enumerate}
\end{defn}

\begin{rem}\label{rational}
Proposition \ref{sameexp} asserts that $(u,v)\res^1_{C,P}(\omega)$ is a rational differential form and not a formal one.
This is the reason why we introduced this second way of Laurent series expansion.
\end{rem}

\begin{rem}
Obviously, if $val_C(\omega)\geq -1$, definition \ref{bordel} coincides with definitions \ref{dinv1res} and \ref{2res}. 
\end{rem}

\begin{rem}\label{remglob}
In this definition of one-codimensional residues, we specify the point $P$. This $1$-form is supposed to give us information about $\omega$ only in a neighborhood of $P$. 
However, we will see in section \ref{about2} that this one-codimensional residue is actually a \textit{global} object on $C$, hence independent on $P$.
\end{rem}

\noindent Now, we will prove the following statements.
\begin{enumerate}
\item One-codimensional residues do not depend on the choice of $v$.
\item Two-codimensional residues do not depend on the choice of $u$ and $v$.
\end{enumerate}

\noindent \textbf{Caution.} In what follows, we sometimes deal with formal differential forms, that is objects of the form $fdu\w dv$, where $f\in k((u))((v))$. Using such a general point of view is necessary in some parts of next proofs (for instance that of theorem \ref{inv2res} and proposition \ref{semiglob}). Definitions of one- and two-codimensional residues extend naturally to formal forms.

\begin{lem}\label{yval}
The morphism $k((u))((v))\rightarrow k((x))((y))$ given by a change of variables (\ref{cv}) in lemma \ref{lemcv} is well-defined and sends series (resp. formal forms) with $(v)$-adic valuation $n\in \Z$ on series (resp. formal forms) with $(y)$-adic valuation $n$.  
\end{lem}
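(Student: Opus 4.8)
The plan is to analyze the change-of-variables morphism given by the formulas (\ref{cv}) and verify it is well-defined as a map of Laurent-series fields, then track what it does to the $(v)$-adic valuation. First I would observe that a morphism $k((u))((v)) \to k((x))((y))$ determined by $u \mapsto f(x,y)$ and $v \mapsto g(x,y)$ is well-defined precisely when $f$ and $g$ lie in $k((x))((y))$, when $f$ is invertible in $k((x))((y))$ (so that negative powers of $u$ can be sent somewhere), and when the substitution converges coefficientwise in the $(y)$-adic topology, i.e. the image of $v$ lies in the maximal ideal $yk((x))[[y]]$. Both requirements are handed to us by lemma \ref{lemcv}: the condition $f(x,0) \in xk[[x]] \smallsetminus x^2 k[[x]]$ forces $f$ to have $(y)$-adic valuation $0$ and nonzero reduction $\bar f = f(x,0)$, hence $f$ is a unit in $k((x))[[y]] \subset k((x))((y))$; and $g \in yk((x))[[y]] \smallsetminus y^2 k((x))[[y]]$ says exactly that $g$ has $(y)$-adic valuation $1$, so $g$ is topologically nilpotent and the substitution $\sum_j h_j(u) v^j \mapsto \sum_j h_j(f) g^j$ converges in $k((x))((y))$. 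This settles well-definedness.

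For the valuation statement, I would argue that the $(y)$-adic valuation of the image of $v^n$ is exactly $n$. Since $\mathrm{val}_{(y)}(g) = 1$ we have $\mathrm{val}_{(y)}(g^n) = n$ for $n \geq 0$; for $n < 0$ one writes $g = y \cdot g_1$ with $g_1 \in k((x))[[y]]^\times$, so $g^n = y^n g_1^n$ with $g_1^n$ still a unit, giving $\mathrm{val}_{(y)}(g^n) = n$. More generally, for a series $s = \sum_{j \geq n} h_j(u) v^j$ with leading term $h_n(u) v^n$, $h_n \neq 0$: applying the morphism sends $h_j(u)$ into $k((x))[[y]]$ (valuation $\geq 0$, since $f$ is a unit so $h_j(f) \in k((x))((y))$ actually lands in $k((x))[[y]]$ when $h_j \in k((u))$), hence the $j$-th term $h_j(f) g^j$ has $(y)$-adic valuation $\geq j \geq n$, with equality for $j = n$ provided $h_n(f)$ has valuation $0$. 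This last point needs the reduction argument: $h_n(f) \bmod y = h_n(\bar f)$ is a nonzero element of $k((x))$ because $\bar f = f(x,0)$ is a nonzero element of $k((x))$ and substitution into the Laurent series $h_n$ over the field $k((x))$ is injective on nonzero elements (composition of nonzero Laurent series is nonzero). Therefore the image has $(y)$-adic valuation exactly $n$. The argument for formal forms $s\,du \w dv$ is identical after noting $du \w dv \mapsto \big(\tfrac{\partial f}{\partial x}\tfrac{\partial g}{\partial y} - \tfrac{\partial f}{\partial y}\tfrac{\partial g}{\partial x}\big) dx \w dy$ and checking this Jacobian is a unit in $k((x))[[y]]$: modulo $y$ it equals $f'(x,0) \cdot (\text{leading } y\text{-coefficient of } g)$ up to the relevant terms, which is nonzero by the two non-membership conditions in (\ref{cv}).

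The main obstacle I anticipate is making the injectivity-of-substitution claim rigorous in the right generality — specifically, that if $h \in k((u))$ is nonzero and we substitute a series $f$ with $\mathrm{val}_{(y)}(f)=0$ and nonzero reduction $\bar f$, then $h(f)$ is a well-defined element of $k((x))((y))$ with $\mathrm{val}_{(y)}(h(f)) = 0$ and reduction $h(\bar f) \neq 0$. Convergence is fine since $f - \bar f$ has positive $(y)$-adic valuation, but one should be slightly careful that $h$ may involve infinitely many negative powers of $u$ pushed off toward $-\infty$; here $h \in k((u))$ means only finitely many negative powers, so $h(f) = \sum_{i \geq i_0} c_i f^i$ with $f^{i_0}$ a unit times $f^{i-i_0}$, and the tail converges. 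The nonvanishing of $h(\bar f)$ then reduces to the standard fact that $k((x))$ is a field and a nonzero Laurent series in one variable, evaluated at a nonzero element of another Laurent-series field via the natural $k((x))$-algebra structure, stays nonzero — which follows because the valuation is additive and multiplicative. Once this is in hand the valuation-tracking is routine bookkeeping, and I would present it as the short reductions above without expanding every $g^n$.
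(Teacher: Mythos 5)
Your overall route is the same as the paper's: substitute via (\ref{cv}), track the $(y)$-adic valuation term by term, and check that the Jacobian $\frac{\p f}{\p x}\frac{\p g}{\p y}-\frac{\p f}{\p y}\frac{\p g}{\p x}$ has $(y)$-adic valuation zero. The valuation bookkeeping and the Jacobian computation are correct. The genuine gap is in the well-definedness step, namely the convergence of $h(f)$ for $h=\sum_{i\geq i_0}c_iu^i\in k((u))$. You justify it by ``$f$ is a unit in $k((x))[[y]]$'' and later by ``$f-\bar f$ has positive $(y)$-adic valuation,'' but neither of these makes $\sum_i c_i f^i$ converge: precisely \emph{because} $f$ has $(y)$-adic valuation $0$, the powers $f^i$ do not tend to $0$ $(y)$-adically, and the $(y)$-adic topology induces the discrete topology on the coefficient field $k((x))$, so the partial sums of $\sum_i c_i f^i$ never stabilize modulo $y$. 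The actual mechanism is that $f(x,0)\in xk[[x]]$ has positive \emph{$(x)$-adic} valuation: the coefficient of $y^i$ in $f^n$ is divisible by $f_0^{\,n-i}$, hence tends to $0$ $(x)$-adically as $n\to\infty$, so the substitution converges coefficientwise in $x$. This is why the paper's appendix introduces the projective-limit topology on $k((x))[[y]]$ (strictly coarser than the $(y)$-adic one) before running its Step 1. Your Taylor-style reduction to ``$f-\bar f$ has positive $(y)$-adic valuation'' only works once you already know that $h$ (and, in positive characteristic, its Hasse derivatives) can be evaluated at $\bar f$ --- which is the same convergence issue in disguise and again rests on $val_{(x)}(\bar f)>0$.

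A smaller imprecision of the same kind: you claim $h_n(\bar f)\neq 0$ ``because $\bar f$ is a nonzero element of $k((x))$.'' Nonzero is not the right hypothesis (for $\bar f$ of $(x)$-valuation $0$ the sum $\sum_i c_i\bar f^i$ need not even converge, let alone be nonzero); what you need, and what (\ref{cv}) provides, is $val_{(x)}(\bar f)=1$, so that the terms $c_i\bar f^{\,i}$ have pairwise distinct valuations and $h_n(\bar f)$ has valuation exactly $i_0$. You do gesture at this with ``the valuation is additive and multiplicative,'' so this fix is cosmetic; the convergence point above is the one that genuinely needs repair.
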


\begin{proof}
See appendix \ref{apyval}.  
\end{proof}


\begin{thm}[Invariance of $2$-residues under (CV)]\label{inv2res}
Let $\omega=h(u,v)du \w dv$ be a formal $2$-form and $(x,y)\in {k((u))((v))}^2$ connected with $(u,v)$ by a change of variables of the form (\ref{cv}). Then,
$$
(u,v)\res^2_{C,P}(\omega)=(x,y)\res^2_{C,P}(\omega).
$$  
\end{thm}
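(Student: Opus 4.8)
The plan is to reduce the invariance of the $2$-residue under a general change of variables \eqref{cv} to two elementary special cases, and to handle the general case by composing them. First I would observe that any change of variables of the form \eqref{cv} factors, after a formal manipulation, as a composition of changes that fix $v$ (i.e. $(u,v)\rightsquigarrow(x,v)$ with $x$ a new uniformizer of $\mathcal{O}_{C,P}$, so $x = f(u,v)$ with $f(u,0)\in uk[[u]]\smallsetminus u^2k[[u]]$, together with the inverse sort $u = u(x,v)$) and changes that fix $u$ (i.e. $(u,v)\rightsquigarrow(u,y)$ with $y = g(u,v)\in vk((u))[[v]]\smallsetminus v^2k((u))[[v]]$). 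By Lemma \ref{yval} each elementary factor is a well-defined map of Laurent-series fields preserving the $(v)$-adic (resp. $(y)$-adic) valuation, so composites make sense and it suffices to prove invariance for each elementary type separately.

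For a change fixing $u$, write $\omega = h(u,v)\,du\wedge dv$ and $y = g(u,v)$. Then $dv = \frac{\partial v}{\partial y}\,dy + \frac{\partial v}{\partial u}\,du$, but the $du$ term dies against $du$, so $\omega = h(u,v)\frac{\partial v}{\partial y}\,du\wedge dy$; hence the new coefficient function is $\tilde h(u,y) = h(u,v(u,y))\,\frac{\partial v}{\partial y}(u,y)$. I would then show directly that the coefficient of $v^{-1}du$ in $h\,dv$ (viewed inside $\Omega^1_{k((u))((v))/k((u))}$) equals the coefficient of $y^{-1}du$ in $\tilde h\,dy$: this is exactly the classical one-variable statement that the residue of a meromorphic $1$-form over the field $k((u))$ is invariant under a change of the local parameter $v\mapsto y$, which can be checked term by term on $v^n\,dv$ (for $n\neq -1$ this is $d(v^{n+1}/(n+1))$, an exact form with zero residue, and exactness is preserved by the substitution; for $n=-1$ one computes $\frac{dv}{v} = \frac{dy}{y} + d(\log(\text{unit}))$ formally, i.e. $\frac{dv}{v}-\frac{dy}{y}$ is exact). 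This gives $(u,v)\res^1_{C,P}(\omega)=(u,y)\res^1_{C,P}(\omega)$ after restricting to $C$, and applying $\res_P$ gives the equality of $2$-residues; in fact this case already recovers, over $k((u))$, the content of Proposition \ref{inv1res}.

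For a change fixing $v$, with $x = f(u,v)$, we have $du = \frac{\partial u}{\partial x}\,dx + \frac{\partial u}{\partial v}\,dv$, so $\omega = h\,du\wedge dv = h\,\frac{\partial u}{\partial x}\,dx\wedge dv$ and the new coefficient is $\tilde h(x,v) = h(u(x,v),v)\,\frac{\partial u}{\partial x}(x,v)$. Now I extract the $v^{-1}$-coefficient: $(u,v)\res^1$ is $h_{-1}(u)\,du$ and $(x,v)\res^1$ is $\tilde h_{-1}(x)\,dx$, where $\tilde h_{-1}(x)\,dx$ is obtained from $h_{-1}(u)\,du$ by the one-variable substitution $u = u(x,0)$, $x = f(u,0)$ — the point being that taking the $v^{-1}$-coefficient commutes with the substitution because the substitution is ``triangular'' in $v$ (it does not lower $v$-valuation, by Lemma \ref{yval}, and the leading $v^0$ part of $x$ is $f(u,0)$, a genuine change of uniformizer of $\mathcal{O}_{C,P}$). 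So $(u,v)\res^1_{C,P}(\omega)$ and $(x,v)\res^1_{C,P}(\omega)$ are two expressions of the \emph{same} rational $1$-form on $C$ in the two local parameters $\bar u$, $\bar x$ at $P$, and therefore have the same $\res_P$ — this is just the classical invariance of the residue of a $1$-form on a curve under change of local parameter.

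Finally I would assemble: an arbitrary \eqref{cv} change $(u,v)\rightsquigarrow(x,y)$ is the composite $(u,v)\rightsquigarrow(x,v)\rightsquigarrow(x,y)$ (first replace $u$ by $x$ keeping $v$, using that $v$ is still a uniformizer for $\mathcal{O}_{C,C}$; then replace $v$ by $y$ keeping $x$), each factor being of one of the two elementary types just treated, and each preserving the relevant valuations by Lemma \ref{yval} so that the intermediate $1$- and $2$-residues are defined; applying the two invariance results in turn yields $(u,v)\res^2_{C,P}(\omega)=(x,v)\res^2_{C,P}(\omega)=(x,y)\res^2_{C,P}(\omega)$. I expect the main obstacle to be the fixed-$v$ case: one must be careful that ``taking the $v^{-1}$-coefficient'' really does commute with the substitution $u\mapsto u(x,v)$, which requires controlling how the higher-order-in-$v$ terms of $x = f(u,v)$ feed back into lower $v$-degrees — this is where Lemma \ref{yval} (valuation preservation) is doing the real work, and one should write the substitution as $f(u,0) + v\cdot(\text{something in }k((u))[[v]])$ and argue that the correction terms only affect coefficients of $v^{j}$ with $j\geq 0$, hence are invisible to the $v^{-1}$-coefficient of a form with a simple-or-worse pole once one has isolated the polar part.
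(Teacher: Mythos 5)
Your decomposition into two elementary changes (one fixing $u$, one fixing $v$) and your treatment of the step that fixes $u$ are exactly the paper's Lemma \ref{ypasdesouci}: that step is fine, and it is correct that even the $1$-residue is invariant there. The genuine gap is in the step that fixes $v$ (the paper's (CV2)). Your central claim — that extracting the $v^{-1}$-coefficient commutes with the substitution $u=f(x,v)$, so that $(u,v)\res^1_{C,P}(\omega)$ and $(x,v)\res^1_{C,P}(\omega)$ are the same rational $1$-form on $C$ written in two local parameters — is false as soon as $\omega$ has a pole of order $\geq 2$ along $C$. The correction terms do \emph{not} only affect coefficients of $v^j$ with $j\geq 0$: writing $f(x,v)=f_0(x)+f_1(x)v+\cdots$, the term $h_{-n}(f(x,v))v^{-n}$ with $n\geq 2$ expands as $h_{-n}(f_0(x))v^{-n}+h_{-n}'(f_0(x))f_1(x)v^{-n+1}+\cdots$, and together with the Jacobian $\partial f/\partial x=f_0'(x)+f_1'(x)v+\cdots$ these feed directly into the $v^{-1}$-coefficient. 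Concretely, take $\omega=\frac{du\w dv}{v^2}$ and $u=x+xy$, $v=y$: then $(u,v)\res^1_{C,P}(\omega)=0$, but $\omega=\frac{1+y}{y^2}\,dx\w dy$, so $(x,y)\res^1_{C,P}(\omega)=d\bar{x}\neq 0$. The $1$-residue genuinely changes under this step — the paper says so explicitly — and only its residue at $P$ survives.

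So the statement you actually need is weaker ($2$-residues, not $1$-residues, are preserved under (CV2)) but its proof is where all the work lies, and none of it appears in your proposal. Your argument as written proves only the simple-pole case (the paper's Lemma \ref{polesimple}, where isolating the polar part does suffice because the $v^{-1}$-term is the entire polar part). For higher-order poles one must reduce to forms $\phi(u)\,du\w \frac{dy}{y^n}$ with $n\geq 2$, split off the $u^{-1}$-term of $\phi$, recognize the rest as an exact form $d\widetilde{\Phi}\w ds$ whose $2$-residue vanishes by a Jacobian computation (Lemma \ref{Jacob}), and then handle $\frac{du}{u}\w\frac{dy}{y^n}$ by factoring $f$ and taking formal logarithms — all of which requires characteristic zero, with the positive-characteristic case then deduced by the principle of prolongation of algebraic identities. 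That entire mechanism is missing, and the heuristic you offer in its place ("the correction terms are invisible to the $v^{-1}$-coefficient") is refuted by the example above.
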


\noindent The proof of this proposition will use forthcoming lemmas \ref{ypasdesouci} and \ref{polesimple}.
First, notice the change of coordinates (\ref{cv}) in lemma \ref{lemcv} can be applied in two steps. First, from $(u,v)$ to $(u,y)$, then from $(u,y)$ to $(x,y)$. That is,
$$
\begin{array}{cccc}
\textrm{first} &
\textrm{(CV1)}
\left\{\begin{array}{rcl}
u & = & u \\   
v & = & \gamma(u,y)
\end{array}\right.
,& \textrm{then} &
\textrm{(CV2)}
\left\{
\begin{array}{rcl}
u & = & f(x,y)\\
y & = & y
\end{array}
\right.
\end{array},
$$
where
$\gamma$ is a series in $yk((u))[[y]]\smallsetminus y^2k((u))[[y]]$ satisfying $g(x,y)=\gamma(f(x,y),y)$.
We will prove successively that $2$-residues are invariant under (CV1) and (CV2).

\begin{lem}[Invariance of $1$-residues under (CV1)]\label{ypasdesouci}
Let $\omega$ be a formal $2$-form. For all $y$ linked to $(u,v)$ by a change of variables (CV1): $v=\gamma(u,y)$, we have
$$
(u,v)\res^1_{C,P}(\omega)=(u,y)\res^1_{C,P}(\omega).
$$
\end{lem}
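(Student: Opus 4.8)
The plan is to compute both sides via explicit Laurent series and to track only the coefficient of $u^{-1}v^{-1}$ (resp.\ $u^{-1}y^{-1}$), since by Definition \ref{bordel} the $1$-residue along $C$ in a neighborhood of $P$ is determined by the coefficients $h_j(u)$ of $v^j$, and changing $v$ into $y = $ (a unit of $k((u))[[v]]$) $\cdot v$ should only reshuffle these coefficients in a way that preserves the whole $1$-form $h_{-1}(u)\,du$, not merely its residue at $P$. So I would actually aim to prove the sharper statement that the entire series $h_{-1}(u)$ is unchanged, which is cleaner than chasing the single coefficient $h_{-1,-1}$.

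First I would write $\omega = h(u,v)\,du\wedge dv$ and perform the substitution $v = \gamma(u,y)$ with $\gamma \in y k((u))[[y]]\smallsetminus y^2 k((u))[[y]]$. Then $dv = \frac{\p\gamma}{\p u}\,du + \frac{\p\gamma}{\p y}\,dy$, and since $du\wedge du = 0$ we get $\omega = h(u,\gamma(u,y))\,\frac{\p\gamma}{\p y}(u,y)\,du\wedge dy$. By Lemma \ref{yval}, the $(v)$-adic valuation of $h$ equals the $(y)$-adic valuation of $h(u,\gamma(u,y))$, and since $\frac{\p\gamma}{\p y}$ is a unit in $k((u))[[y]]$ (its constant term in $y$ is the leading coefficient of $\gamma$, which is nonzero), multiplication by it preserves $(y)$-adic valuation too. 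The heart of the matter is then the purely one-variable identity: if $H(v) = \sum_j h_j v^j \in k((u))((v))$ and $\gamma(y) \in y k((u))[[y]]$ has nonzero linear term, then the coefficient of $y^{-1}$ in $H(\gamma(y))\,\gamma'(y)$ equals the coefficient $h_{-1}$ of $v^{-1}$ in $H(v)$. This is the classical formal-residue formula $\res_y(H(\gamma)\,d\gamma) = \res_v(H\,dv)$ for a formal change of parameter, valid over any commutative ring with the coefficients living in $k((u))$; I would either cite it or prove it by the standard reduction (it is linear in $H$, it holds trivially for $H = v^j$ with $j \neq -1$ because $v^j\,dv = d(\frac{v^{j+1}}{j+1})$ is an exact form with zero residue, and for $j = -1$ one checks $\frac{\gamma'}{\gamma}$ has residue $1$ directly from $\gamma = c y (1 + \text{higher})$).

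Granting this, the coefficient of $y^{-1}$ in $h(u,\gamma(u,y))\frac{\p\gamma}{\p y}(u,y)$, viewed as an element of $k((u))$, is exactly $h_{-1}(u)$; restricting to $C$ (setting $u \mapsto \bar u$) and multiplying by $d\bar u$ gives $(u,y)\res^1_{C,P}(\omega) = h_{-1}(\bar u)\,d\bar u = (u,v)\res^1_{C,P}(\omega)$. The main obstacle, and the only place needing care, is justifying that all the formal manipulations (substitution $v = \gamma(u,y)$ into a Laurent series in $v$, differentiation, extraction of coefficients) are legitimate in the ring $k((u))((v))$ and compatible with the ones already used to define the $1$-residue — but this is precisely what Lemma \ref{yval} is there to supply, so I would invoke it to guarantee well-definedness of the substitution map and then the argument is essentially the one-variable formal residue theorem applied coefficientwise over $k((u))$.
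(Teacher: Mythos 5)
Your proposal is correct and follows essentially the same route as the paper: write $\omega=h\,du\wedge dv$, substitute $v=\gamma(u,y)$ to get $h(u,\gamma(u,y))\,\frac{\p\gamma}{\p y}\,du\wedge dy$, and reduce to the invariance of the one-variable formal residue under a change of prime element, applied over the base field $k((u))$ — the paper does exactly this, citing \cite{sti} IV.2.9 (with the remark that the proof there is purely formal and does not need the base field to be perfect).

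One caveat on your optional self-contained proof of the one-variable identity: the step ``$v^j\,dv=d\bigl(\tfrac{v^{j+1}}{j+1}\bigr)$ is exact, hence has zero residue'' fails in characteristic $p>0$ whenever $p\mid j+1$ (and $j\neq -1$), since you cannot divide by $j+1$. Those cases are precisely the delicate ones; handling them requires either citing the classical result (as the paper does) or the integer-polynomial/reduction-to-characteristic-zero argument that the paper deploys in appendix \ref{apinv2res} for the analogous two-variable statement. Since your primary plan is to cite the formula, the proof goes through, but the exact-form shortcut alone would only establish the lemma in characteristic zero.
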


\begin{proof}
The $2$-form $\omega$ is of the form $\omega=h du\w dv$ for some $h\in k((u))((v))$. After applying (CV1), we get
$$
\omega=h(u,\gamma(u,y))\frac{\p \gamma}{\p y} du\w dy.
$$  

The field $k((u))((v))$ is the $(v)$-adic completion of the $k((u))(v)$ regarded as a function field over $k((u))$. From \cite{sti} IV.2.9, the coefficient of $v^{-1}$ in $h(u,v)$ equals that of $y^{-1}$ in $h(u,\gamma(u,y))\p \gamma/\p y$.
\end{proof}

\begin{rem}
Notice that in the whole chapter IV of \cite{sti}, the base field is assumed to be perfect, which is not true for $k((u))$ if $\textrm{Char}(k)>0$. However,the proof of IV.2.9 is purely formal and holds for non-perfect base fields. 
\end{rem}

\noindent Operation (CV2) might change $1$-residues. Nevertheless, we will see that it preserves $2$-residues.

\begin{lem}\label{polesimple}
Let $\omega$ be a formal $2$-form, $\omega=h(u,v)du \w dv$ with $h\in k((u))((v))$ such that $val_{(y)}(h)\geq -1$.
Then, for any pair $(x,y)\in k((u))((v))^2$ related to $(u,v)$ by a change of variables (\ref{cv}) of lemma \ref{lemcv}, we have
$$
(u,v)\res^1_{C,P}(\omega)=(x,y)\res^1_{C,P}(\omega).
$$    
\end{lem}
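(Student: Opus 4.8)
The statement to prove is Lemma \ref{polesimple}: if $\omega = h(u,v)\,du\wedge dv$ is a formal $2$-form with $val_{(y)}(h) \geq -1$, then the $1$-residue $(u,v)\res^1_{C,P}(\omega)$ is unchanged under an arbitrary change of variables (\ref{cv}). By Lemma \ref{yval}, the hypothesis $val_{(y)}(h)\geq -1$ is equivalent to $val_{(v)}(h)\geq -1$, so $\omega$ has at worst a simple pole "along $v=0$" in both coordinate systems. The plan is to reduce to the two elementary steps (CV1) and (CV2) introduced just before the statement of Theorem \ref{inv2res}, exactly as is done there. Invariance under (CV1) is already Lemma \ref{ypasdesouci} (which has no hypothesis on the order of the pole), so the entire content of this lemma is to handle (CV2): the change $u = f(x,y)$, $y=y$, where $f(x,0) \in xk[[x]]\smallsetminus x^2k[[x]]$.

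**The main computation for (CV2).** After the first step we may assume $\omega = h(u,y)\,du\wedge dy$ with $h = \sum_{j\geq -1} h_j(u)\, y^j$ and $(u,y)\res^1_{C,P}(\omega) = h_{-1}(\bar u)\,d\bar u$. Substituting $u = f(x,y)$ gives
\[
\omega = h(f(x,y),y)\,\frac{\partial f}{\partial x}\,dx\wedge dy .
\]
Write $f(x,y) = f_0(x) + y f_1(x) + \cdots$ with $f_0(x) = f(x,0) \in xk[[x]]\smallsetminus x^2 k[[x]]$; note $f_0'(x)$ is a unit in $k[[x]]$ and $\partial f/\partial x$ reduces mod $(y)$ to $f_0'(x)$. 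I would expand $h(f(x,y),y)\,\partial f/\partial x$ as a Laurent series in $y$ with coefficients in $k((x))$, extract the coefficient of $y^{-1}$, and show that — after restricting to $y=0$, i.e. to $C$ — it equals $h_{-1}(f_0(x))\, f_0'(x)$. Since $h$ has at worst a simple pole, only the term $h_{-1}(u)/y$ of $\omega$ contributes a $y^{-1}$-term after substitution: substituting $u=f(x,y)$ into $h_{-1}(u)$ produces a power series in $y$ (as $f(x,y)-f_0(x)\in yk((x))[[y]]$ and $h_{-1}$ is a Laurent series in $x$, composition is legitimate by the valuation bookkeeping of Lemma \ref{yval}), so its product with $(1/y)\cdot \partial f/\partial x$ has $y^{-1}$-coefficient equal to the $y^0$-coefficient of $h_{-1}(f(x,y))\,\partial f/\partial x$, which is precisely $h_{-1}(f_0(x))\,f_0'(x)$. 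The regular part $\sum_{j\geq 0} h_j(u)y^j$ of $\omega$, after substitution, stays in $k((x))[[y]]$ and contributes nothing to the $y^{-1}$-coefficient. Hence $(x,y)\res^1_{C,P}(\omega) = h_{-1}(f_0(\bar x))\,f_0'(\bar x)\,d\bar x$. Finally, $\bar u = f_0(\bar x)$ and the chain rule $d\bar u = f_0'(\bar x)\,d\bar x$ in $\Omega^1_{k(C)/k}$ identify this with $h_{-1}(\bar u)\,d\bar u = (u,y)\res^1_{C,P}(\omega)$, completing the proof.

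**Where the difficulty lies.** The conceptual steps are routine once the reduction to (CV1)+(CV2) is in place; the real care is needed in justifying that the formal manipulations make sense. The delicate point is the composition $h_{-1}(u) \mapsto h_{-1}(f(x,y))$ when $h_{-1}$ is genuinely a Laurent series in $x$ (not a polynomial) — one must check that substituting $f(x,y) = f_0(x)+yf_1(x)+\cdots$, with $f_0$ having a simple zero, yields a well-defined element of $k((x))((y))$ and that its $(y)$-adic valuation behaves as expected. This is exactly the content of Lemma \ref{yval} (proved in the appendix), so I would invoke it rather than reprove it. A secondary subtlety: the hypothesis on the pole order is essential here in a way it was not for (CV1) — if $h$ had a pole of order $\geq 2$ in $y$, substituting $u=f(x,y)$ would mix higher negative powers and the naive formula for the $y^{-1}$-coefficient would pick up correction terms involving $f_1, f_2, \ldots$; these are precisely the terms that will later obstruct invariance of the $1$-residue for general $\omega$ (which is why only the $2$-residue survives in Theorem \ref{inv2res}). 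I would remark on this to motivate the hypothesis.
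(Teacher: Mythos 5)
Your proof is correct and follows essentially the same route as the paper's: reduce to (CV2) via Lemma \ref{ypasdesouci}, split off the simple-pole term $h_{-1}(u)\,du\wedge dy/y$, use Lemma \ref{yval} to see that the regular part stays regular and that $h_{-1}(f(x,y))$ has $(y)$-adic valuation zero, and conclude by the chain rule $\bar u = f_0(\bar x)$. Your closing remark on why the simple-pole hypothesis is essential is a nice motivational addition but not needed for the argument.
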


\begin{rem}
Notice that the proof of proposition \ref{inv1res} is a direct consequence of lemma \ref{polesimple}.
\end{rem}

\begin{proof}
From lemma \ref{ypasdesouci}, $(u,v)\res^1_{C,P}(\omega)=(u,y)\res^1_{C,P}(\omega)$. Thus, we only study the behavior of residues under (CV2). Decompose $\omega$ by isolating its degree $-1$ term,
$$
\omega= \frac{h_{-1}(u)}{y} du\w dy + \left( \sum_{j\geq 0}h_j(u)y^j  \right)du \w dy = \omega_{-1}+\omega_+ .
$$
The formal form $\omega_+$ has positive $(y)$-adic valuation. From lemma \ref{yval},  the change of variables (CV2) does not change this valuation.
Consequently, the $(x,y)$-$1$-residue of $\omega$ is that of $\omega_{-1}$ and
after applying (CV2), we have
$$
\omega_{-1}=\frac{h_{-1}(f(x,y))}{y}\frac{\p f}{\p x}dx\w dy.
$$
From lemma \ref{yval}, $h_{-1}(f(x,y))$ has $(y)$-adic valuation zero. Thus, 
$$
(x,y)\res^1_{C,P}(\omega)=h_{-1}(f_0(\bar{x}))f'_0(\bar{x})d\bar{x}
=h_{-1}(f_0(\bar{x}))d(f_0(\bar{x})),
$$
where $f_0(x):=f(x,0)$.
This formal $1$-form equals $(u,y)\res^1_{C,P}(\omega)=h_{-1}(\bar{u})d\bar{u}$, using the change of variables $\bar{u}=f(\bar{x},0)$.
\end {proof}

\noindent For the proof of theorem \ref{inv2res}, we need also the following lemma.

\begin{lem}\label{Jacob}
Let $A,B\in k((u))((v))$, then for all pair of series $(x,y)$ associated with $(u,v)$ by a change of variables (\ref{cv}), we have
$$
(x,y)\res^2_{C,P}\left(dA\w dB \right)=0.
$$ 
\end{lem}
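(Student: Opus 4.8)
The plan is to reduce everything to one elementary computation in a two-variable Laurent-series field and then transport it along the change of variables. First I would record the basic identity for formal $2$-forms: if $\omega = dA\w dB$ with $A,B\in k((u))((v))$, then, writing $\p_u,\p_v$ for the two commuting $k$-derivations of $k((u))((v))$ normalized by $\p_u v=\p_v u=0$ and $\p_u u=\p_v v=1$, the exterior derivative is $dA=\p_u A\,du+\p_v A\,dv$ and hence
$$
\omega \;=\; dA\w dB \;=\; \bigl(\p_u A\cdot\p_v B-\p_v A\cdot\p_u B\bigr)\,du\w dv .
$$
Thus, by definition \ref{bordel}, $(u,v)\res^2_{C,P}(\omega)$ is the coefficient of $u^{-1}v^{-1}$ in the Jacobian $J:=\p_u A\cdot\p_v B-\p_v A\cdot\p_u B$.

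Next I would prove the key claim: for arbitrary $A,B\in k((u))((v))$ this coefficient vanishes. The point is the ``integration by parts'' identity, valid because mixed partials commute,
$$
J \;=\; \p_v\!\bigl(\p_u A\cdot B\bigr)-\p_u\!\bigl(\p_v A\cdot B\bigr).
$$
For the first summand, for any $G=\sum_j G_j(u)v^j\in k((u))((v))$ the coefficient of $v^{-1}$ in $\p_v G$ is $0\cdot G_0(u)=0$, so a fortiori the coefficient of $u^{-1}v^{-1}$ in $\p_v(\,\cdot\,)$ is $0$. For the second summand, writing $H=\sum_j H_j(u)v^j$ one has $\p_u H=\sum_j H_j'(u)v^j$, so the coefficient of $v^{-1}$ in $\p_u H$ is $H_{-1}'(u)$, whose coefficient of $u^{-1}$ is $0$ because a one-variable Laurent derivative never has a $u^{-1}$ term. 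Subtracting, the coefficient of $u^{-1}v^{-1}$ in $J$ is $0$, i.e. $(u,v)\res^2_{C,P}(dA\w dB)=0$. In passing I would verify the routine but necessary facts that $\p_u,\p_v$ are well defined and continuous on the $(v)$-adically complete ring $k((u))((v))$, that they commute, and that $\p_u$ acts coefficientwise on the $v$-expansion.

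Finally I would pass to an arbitrary pair $(x,y)$ linked to $(u,v)$ by a change of variables \eqref{cv}. Let $\psi\colon k((u))((v))\to k((x))((y))$ be the ring homomorphism of lemma \ref{yval} (the substitution $u=f(x,y)$, $v=g(x,y)$). Since $\psi$ is a $k$-algebra homomorphism, the exterior derivative is functorial along it and compatible with wedge products, so the image of $\omega=dA\w dB$ expressed in the coordinates $(x,y)$ is $d(\psi A)\w d(\psi B)$ with $\psi A,\psi B\in k((x))((y))$. By definition $(x,y)\res^2_{C,P}(\omega)$ is the $2$-residue of $d(\psi A)\w d(\psi B)$ in the pair $(x,y)$, which is $0$ by the claim of the previous paragraph applied verbatim with $x,y$ in place of $u,v$. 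This finishes the proof.

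I expect the only slightly delicate point to be the bookkeeping in the middle step — making sure that the operations ``extract the coefficient of $v^{-1}$'', $\p_u$ and $\p_v$ behave as described on the large topological field $k((u))((v))$ and interact correctly — together with the standard compatibility of $d$ with the substitution morphism $\psi$; the Jacobian identity itself is a one-line computation.
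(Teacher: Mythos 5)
Your proof is correct. The reduction is the same as the paper's: both arguments note that the substitution morphism commutes with $d$ and $\wedge$, so that it suffices to show the coefficient of $u^{-1}v^{-1}$ in the Jacobian $J=\p_u A\cdot\p_v B-\p_v A\cdot\p_u B$ vanishes. Where you genuinely differ is in how that coefficient is killed. The paper decomposes $A$ and $B$ into their regular parts and their finitely many polar terms $a(u)v^{-m}$ and, using bilinearity and antisymmetry of the Jacobian, checks three cases, concluding each time that the $v^{-1}$-coefficient of $J$ is a $u$-derivative (its lemma \ref{mescouilles}). You instead use the single ``integration by parts'' identity $J=\p_v\bigl(\p_u A\cdot B\bigr)-\p_u\bigl(\p_v A\cdot B\bigr)$, after which the vanishing is immediate: $\p_v$ contributes nothing to the $v^{-1}$-coefficient, and the $v^{-1}$-coefficient of the $\p_u$-term is a $u$-derivative, hence has no $u^{-1}$-term. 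Your version is more uniform, avoids the case analysis, and makes transparent that the whole lemma reduces to the one fact that a formal derivative has zero residue; the paper's version is more pedestrian but requires no Leibniz-rule manipulation in the two-variable ring. The points you flag as delicate (continuity and commutation of $\p_u,\p_v$ on $k((u))((v))$, compatibility of $d$ with the substitution of lemma \ref{yval}) are also left implicit in the paper, so you are not assuming anything it does not.
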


\begin{proof}
See appendix \ref{apJacob}.  
\end{proof}

\begin{proof}[Proof of theorem \ref{inv2res} if $\textrm{Char}(k)=0$.]
From lemma \ref{ypasdesouci}, we allready know that $1$-residues are invariant under (CV1). Thus, we will only study their behavior under (CV2). Consider any formal $2$-form
$$
\omega=\sum_{j=-l}^{-2} h_j(u)y^j du\w dy + \sum_{j\geq -1}h_j(u)y^j du\w dy=\omega_{-}+\omega_{\textrm{inv}}\ .
$$
From lemma \ref{polesimple}, the formal form $\omega_{\textrm{inv}}$
has an invariant $1$-residue under (\ref{cv}), thus so is its $2$-residue. We now have to study $\omega_{-}$.
Since extraction of $(x,y)$-$1$- and -$2$-residues are $k$-linear operations, we may only consider $2$-forms of the form
$$
\omega=\phi(u)du\w \frac{dy}{y^n}\quad \textrm{with}\quad \phi \in k((u))\quad \textrm{and}\quad n\geq 2.
$$

The formal $2$-form $\omega$ has a zero $(u,y)$-$2$-residue because its $(u,y)$-$1$-residue is also zero.
Then, we have to prove that its $(x,y)$-$2$-residue is zero too.
Before applying (CV2), we will work a little bit more on $\omega$.
First, isolate the term in $u^{-1}$ of the Laurent series $\phi$.
$$\phi(u)=\widetilde{\phi}(u)+\frac{\phi_{-1}}{u},\  \textrm{where}\ \widetilde{\phi}_i=
\left\{
\begin{array}{lll}
\phi_i & \textrm{if} & i\neq -1 \\
0 & \textrm{if} & i= -1.
\end{array}
\right.$$

\noindent The series $\widetilde{\phi}$ has a formal primitive $\widetilde{\Phi}$. Set $s:=\frac{1}{(1-n)y^{n-1}}$, which is a primitive of $1/y^n$ (it makes sense because $\textrm{Char}(k)$ is assumed to be zero). Then, we have
$$
\omega= d\widetilde{\Phi} \w ds+ \phi_{-1} \frac{du}{u}\w ds =\omega_{r}+\phi_{-1}\omega_{-1}.
$$
From lemma \ref{Jacob}, the form $\omega_{r}$ has a zero $2$-residue for all pair $(x,y)\in k((u))((v))^2$ connected to $(u,v)$ by a change of variables (\ref{cv}).
Now consider $\omega_{-1}=\frac{du}{u}\w \frac{dy}{y^n}$ and apply (CV2),
$$
\omega_{-1}=\frac{df(x,y)}{f(x,y)}\w \frac{dy}{y^n}.
$$

\noindent Recall that $f$ is of the form $\sum_{j\geq 0} f_j(x)y^j$ with
$$f_0(x)=f_{1,0}x+f_{2,0}x^2+\cdots \quad \textrm{and} \quad f_{1,0}\neq 0.$$ 
Thus, one can factorize $f_0$ in
$$
f_0(x)=f_{1,0}x\left(1+\frac{\displaystyle f_{2,0}}{\displaystyle f_{1,0}}x+\cdots \right).
$$
 
\noindent Set
$$
\begin{array}{lrcll}
 & r(x) & := & \frac{\displaystyle f_{2,0}}{\displaystyle f_{1,0}}x+\frac{\displaystyle f_{3,0}}{\displaystyle f_{1,0}}x^2 +\cdots & \in k[[x]]\\
\textrm{and} & \mu(x,y) & := & \frac{\displaystyle f_1(x)}{\displaystyle f_0(x)}y+\frac{\displaystyle f_2(x)}{\displaystyle f_0(x)}y^2 +\cdots & \in k((x))[[y]].
\end{array}
$$

\noindent The series $f$ has the following factorization
\begin{equation}\label{factf}
f(x,y)=f_{1,0}x(1+r(x))(1+\mu(x,y)). 
\end{equation}

\noindent Moreover, for every series $S$ in $xk[[x]]$ (resp. in $yk((x))[[y]]$) we define the formal logarithm of $1+S$ to be
$$\log(1+S):=\sum_{k=0}^{+\infty} (-1)^{k+1}\frac{S^k}{k}.$$

\noindent This makes sense because $\textrm{Char}(k)=0$
and this series converges for the $(x)$-adic (resp. $(y)$-adic) valuation.
Furthermore, $\frac{d(1+S)}{(1+S)}=d\log(1+S)$.
Using factorization (\ref{factf}), we obtain
$$
\omega_{-1}  =  \frac{ dx}{ x} \w \frac{ dy}{ y^n} +d\log(1+r)\w ds + d\log(1+\mu)\w ds.   
$$
From lemma \ref{Jacob}, second and third term of the sum have zero $(x,y)$-$2$-residues, and the first one has zero $(x,y)$-$1$-residue, hence a zero $(x,y)$-$2$-residue.
\end{proof}

\begin{proof}[Proof of theorem \ref{inv2res} in positive characteristic.]
  The idea is basically the same as in the proof of invariance of residues of $1$-forms on curves (c.f. \cite{sti} IV.2.9 or \cite{ser} prop II.7.5).
One proves that the $(x,y)$-$2$-residue of $\omega$ is a polynomial expression in a finite family of coefficients of $f$.
This polynomial has integer coefficients and depends neither on $f$ nor on the base field $k$. Thus, using the result of the proof in characteristic zero and the principle of prolongation of algebraic identities (\cite{bou} prop IV.3.9), we conclude that this polynomial is zero. For more details see appendix \ref{apinv2res}.   
\end{proof}



Consequently, from now on, when we deal with $2$-residues at $P$ along $C$, we won't have to precise the $(P,C)$-pair.

\section{Properties of residues}\label{propres}

\subsection*{Context}
In this section, $k$ is an \textbf{algebraically closed} field and $S$ a smooth geometrically integral quasi-projective surface over $k$. Moreover, $C$ denotes an irreducible absolutely reduced curve embedded in $S$ and $P$ a point of $C$.

\subsection{About $2$-residues}
Next lemma gives a necessary condition on $\omega$ to have nonzero $2$-residues at $P$ along $C$.

\begin{lem}\label{oukisont}
Let $\omega \in \Omega^2_{k(S)/k}$ having the curve $C$ as a pole. Let $P\in C$ such that $C$ is the only one pole of $\omega$ in a neighborhood of $P$. Then, $\res^2_{C,P}(\omega)=0$.
\end{lem}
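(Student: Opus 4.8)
The plan is to reduce the computation to an explicit Laurent expansion in a particularly well-adapted strong $(P,C)$-pair, invoking the invariance theorem \ref{inv2res} to guarantee that the $2$-residue is unchanged by this choice. The point is that for a general weak pair the function $u$ need not even be regular at $P$, so the clean power-series argument in $k[[u,v]]$ is unavailable; it is precisely theorem \ref{inv2res}, together with proposition \ref{sameexp}, that legitimizes passing to convenient coordinates.

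First I would choose coordinates. Since $S$ is smooth and $P$ is a smooth point of $C$, the local ring $\OP$ is regular of dimension two and $C$ is cut out near $P$ by a single element $v\in\mP$ which may be taken to be part of a regular system of parameters $(u,v)$ at $P$; then $v$ is automatically a uniformizing parameter of $\OC$, so $(u,v)$ is a strong $(P,C)$-pair. By theorem \ref{inv2res} it is enough to compute $(u,v)\res^2_{C,P}(\omega)$, and for a strong pair proposition \ref{sameexp} ensures that the Laurent expansion used in definition \ref{bordel} coincides with the embedding $\phi$ of lemma \ref{kuv}, which sends $\OP$ into $k[[u,v]]$.

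Next I would exploit the hypothesis on the poles. Set $l:=-val_C(\omega)\ge 1$, and write $\omega=h\,du\w dv$ with $h\in k(S)$. The assumption that $C$ is the only pole of $\omega$ in a neighborhood of $P$ means that the polar divisor of $\omega$ restricted to a suitable open $U\ni P$ equals $l\,C$, hence $v^{l}\omega$ is a regular $2$-form on $U$. Because $\Omega^2_{\OP/k}$ is free of rank one over $\OP$ with basis $du\w dv$, this forces $g:=v^{l}h\in\OP$. Expanding $g=\phi(g)=\sum_{i,j\ge 0}a_{ij}u^{i}v^{j}\in k[[u,v]]$ and dividing by $v^{l}$, every coefficient $h_j(u)=\sum_{i\ge 0}a_{i,j+l}u^{i}$ in the expansion $h=\sum_j h_j(u)v^{j}$ lies in $k[[u]]$; in particular $h_{-1}(u)\in k[[u]]$.

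Finally, $(u,v)\res^1_{C,P}(\omega)=h_{-1}(\bar u)\,d\bar u$ is a differential form regular at $P$, since $\bar u$ is a local parameter of $\mathcal{O}_{C,P}$ and $h_{-1}$ has nonnegative $u$-adic valuation, so its residue at $P$ vanishes; therefore $\res^2_{C,P}(\omega)=h_{-1,-1}=0$. The main obstacle is simply setting up the reduction step carefully: translating ``$C$ is the only pole near $P$'' into the statement that $v^{l}\omega$ extends regularly across $P$, and making sure the Laurent expansion invoked in definition \ref{bordel} for the chosen strong pair is the one that respects $\OP\hookrightarrow k[[u,v]]$, which is exactly the content of proposition \ref{sameexp}.
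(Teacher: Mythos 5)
Your proof is correct and follows essentially the same route as the paper: choose a strong $(P,C)$-pair, use the hypothesis that $C$ is the only pole near $P$ to conclude that $v^{l}h$ lies in $\OP$, expand it in $k[[u,v]]$, and observe that the resulting $1$-residue $h_{-1}(\bar u)\,d\bar u$ is regular at $P$. The extra care you take in invoking theorem \ref{inv2res} and proposition \ref{sameexp} to justify the choice of coordinates is harmless but not needed beyond what the paper already establishes.
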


\begin{proof}
Let $(u,v)$ be a strong $(P,C)$-pair and $n:=-val_C(\omega)$. There exists a function $h\in \OC$ such that
$$\omega=h du\w \frac{dv}{v^n}.$$
Furthermore, since $\omega$ has no pole but $C$ in a neighborhood of $P$, the function $h$ is in $\OP$.
Consequently, $h$ has a Taylor expansion $\sum_{i\geq 0} h_i(u)v^i $, where $h_i \in k[[u]]$ for all $i$. Then, $(u,v)res^1_{C}(\omega)=h_{n-1}(\bar{u})d\bar{u}$, which is regular at $P$, hence has zero residue at this point.
\end{proof}

\subsection{About $1$-residues}\label{about2}
We will give a new definition for one-codimensional residues generalizing the previous one. The goal is, as said in remark \ref{remglob}, to define $1$-residues as global objects on the curve $C$.

\begin{prop}\label{prores} 
Let $u,v$ be elements of $\OC$ such that $\bar{u}$ is a separating element\footnote{See \cite{sti} p. 127 for a definition.} of $k(C)/k$ and $v$ is a uniformizing parameter of $\OC$. Then, any $2$-form $\omega \in \Omega^2_{k(S)/k}$ can be expanded as 
\begin{equation}\label{Kudec}
\omega=\sum_{j\geq -l} f_j v^j du\w dv,
\end{equation}

\noindent where $f_j$'s are elements of the Hensel lift $\mathcal{K}_u$ of $k(C)$ over $k(u)$ in $\hOC$ (see proposition \ref{Ku}). 
Furthermore, the $1$-form $\bar{f}_{-1}d\bar{u}$ is rational on $C$ and does not depend on the choice of the uniformizing parameter $v$ of $\OC$.
\end{prop}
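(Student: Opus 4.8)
The plan is to build the expansion (\ref{Kudec}) from the two ingredients already available: Cohen's structure theorem through the field $\mathcal{K}_u$ of Proposition \ref{Ku}, and the one-dimensionality of $\Omega^2_{k(S)/k}$ over $k(S)$. First I would note that since $\bar{u}$ is a separating element of $k(C)/k$, the differentials $d\bar{u}$ and (a choice of) $d\bar{v}$ — more precisely $du$ and $dv$ viewed in $\Omega^2_{k(S)/k}$ — span: concretely, $du\wedge dv\neq 0$ in $\Omega^2_{k(S)/k}$, because $\bar u$ separating forces $d\bar u\neq 0$ on $C$, and $v$ being a uniformizer along $C$ forces $dv$ to be non-colinear with $du$ modulo $\mathfrak{m}_{S,C}$. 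Hence by (\ref{dimform}) there is a unique $h\in k(S)$ with $\omega=h\,du\wedge dv$. Then I would apply Corollary \ref{Ku[[v]]} to $h$: after clearing the pole along $C$ (i.e. writing $h=v^{-l}h'$ with $h'\in\hOC$ for $l=\max(0,-val_C(h))$), $h'$ has a unique expansion $\sum_{j\geq 0}f'_j v^j$ with $f'_j\in\mathcal{K}_u$, so $h=\sum_{j\geq -l}f_j v^j$ with $f_j\in\mathcal{K}_u$, giving (\ref{Kudec}).

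Next I would prove rationality of $\bar f_{-1}d\bar u$. Here the point is exactly Remark \ref{rational}: $\mathcal{K}_u$ is a genuine copy of the rational function field $k(C)$ inside $\hOC$, so $\bar f_{-1}\in k(C)$ is a rational function on $C$, and $d\bar u$ is a rational $1$-form on $C$ since $\bar u\in k(C)$. So $\bar f_{-1}d\bar u\in\Omega^1_{k(C)/k}$ is rational, not merely formal. (One should observe this agrees with $(u,v)\res^1_{C,P}(\omega)$ of Definition \ref{bordel} for any $P$ at which $(u,v)$ is a weak pair, via Proposition \ref{sameexp}; this is what makes the object ``global''.)

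The remaining and main task is independence of the choice of uniformizing parameter $v$. Fix $u$ and let $v,v'$ be two uniformizers of $\OC$; then $v'=wv$ for some unit $w\in\OC^\times$, and expanding $w$ in $\mathcal{K}_u[[v]]$ we have $w=w_0+w_1v+\cdots$ with $w_0\in\mathcal{K}_u^\times$. I would reduce to a local computation at a point $P$ where $(u,v)$ and $(u,v')$ are both weak $(P,C)$-pairs — such $P$ exists since $u,v,v',w,w^{-1}$ are all regular and $w$ is a unit on a dense open of $C$ — and then invoke Lemma \ref{ypasdesouci} (invariance of $1$-residues under a change of the form (CV1), namely $v\mapsto v'=wv$ which is precisely of type (CV1) since $v'\in v\,k((u))[[v]]\smallsetminus v^2k((u))[[v]]$). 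Lemma \ref{ypasdesouci} gives $(u,v)\res^1_{C,P}(\omega)=(u,v')\res^1_{C,P}(\omega)$ as rational $1$-forms on $C$, i.e. $\bar f_{-1}d\bar u=\bar f'_{-1}d\bar u$. Since this holds for $P$ ranging over a dense open of $C$ and both sides are rational $1$-forms, they coincide on all of $C$. The expected obstacle is making the passage from the purely formal statement of Lemma \ref{ypasdesouci} (which lives in $k((u))((v))$) to an equality of honest rational objects on $C$ rigorous; this is handled by choosing $P$ generic enough and using Proposition \ref{sameexp} to identify the formal $1$-residue with the rational one, after which density finishes the argument.
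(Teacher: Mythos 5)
Your proof is correct, and its first three steps are exactly the paper's: the unique $h$ with $\omega=h\,du\wedge dv$ from $\dim_{k(S)}\Omega^2_{k(S)/k}=1$, the expansion from Corollary~\ref{Ku[[v]]} after clearing the pole along $C$, and the rationality of $\bar{f}_{-1}d\bar{u}$ from the identification $\mathcal{K}_u\cong k(C)$ of Proposition~\ref{Ku}. The only divergence is in the independence of $v$. The paper handles it by running the argument of Lemma~\ref{ypasdesouci} \emph{directly over the coefficient field} $\mathcal{K}_u$: one views $\mathcal{K}_u((v))$ as the completion of a function field over $\mathcal{K}_u\cong k(C)$ and invokes the formal invariance of the coefficient of $v^{-1}$ (times the Jacobian $\partial v/\partial v'$) under the substitution $v\mapsto v'=wv$, exactly as in \cite{sti} IV.2.9. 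Your detour --- localize at a well-chosen point $P$, apply Lemma~\ref{ypasdesouci} over $k((u))$, and transfer back via Proposition~\ref{sameexp} --- reaches the same conclusion; note that since $\mathcal{K}_u\hookrightarrow k((u))$ is injective, the equality of the $v^{-1}$-coefficients in $k((u))$ already forces $f_{-1}=f'_{-1}$ in $\mathcal{K}_u$, so a single suitable $P$ suffices and your density argument is superfluous. The price of your route is the bookkeeping you only gesture at: one must replace $u$ by $u-\bar{u}(P)$ so that the pairs are genuinely weak $(P,C)$-pairs, and choose $P$ so that $\bar{u}-\bar{u}(P)$ is a local parameter of $\mathcal{O}_{C,P}$ (possible on a dense open since $\bar{u}$ is separating). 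The direct formal argument over $\mathcal{K}_u$ avoids any reference to a point, which is in the spirit of the proposition (the $1$-residue is a global object on $C$).
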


\begin{proof}
Recall that, from \cite{sch1} thm III.5.4.3, the space $\Omega^2_{k(S)/k}$ has dimension one over $k(S)$. Thus, there exists a unique function $f\in k(S)$ such that $\omega=fdu\w dv$. 
From corollary \ref{Ku[[v]]}, one can expand $f$ in $\mathcal{K}_u((v))$, which gives expansion (\ref{Kudec}).
From the construction of $\mathcal{K}_u$ (see proposition \ref{Ku}), $\bar{f}_{-1}$ may be identified to a rational function on $C$. Thus, the $1$-form $\bar{f}_{-1}d\bar{u}$ is rational on $C$.
To prove its independence on the choice of $v$, the reasoning is exactly the same as in the proof of lemma \ref{ypasdesouci}. 
\end{proof}

\begin{defn}\label{definres}
Under the assumptions of proposition \ref{prores}, we call $(u)$-$1$-residue of $\omega$ along $C$ and denote by $(u)\res^1_C(\omega)$ the rational $1$-form
$$
(u)\res^1_{C}(\omega):=\bar{f}_{-1}d\bar{u}\in \Omega^1_{k(C)/k}.
$$
\end{defn}

\begin{rem}
Using lemma \ref{polesimple}, one can prove that if $val_C(\omega)\geq -1$, then this $1$-form is also independent on the choice of $u$.  
\end{rem}

\begin{rem}
Let $\omega \in \Omega^2_{k(S)/k}$ and $u\in \hOC$ such that $\bar{u}$ is a separating element of $k(C)/k$. Set
$$
\mu:=(u)\res^1_C(\omega) \in \Omega^1_{k(C)/k}.
$$
Then, at each point $P$ of $C$ where $\bar{u}$ is a local parameter, we have
\begin{equation}\label{1,2}\tag{$\spadesuit$}
(u,v)\res^1_{C,P}(\omega)=\mu \quad \textrm{and} \quad \res^2_{C,P}(\omega)=
\res_P(\mu).
\end{equation}
\end{rem}

This remark asserts that definition \ref{definres} generalizes the notion one-codimensional residue (definition \ref{bordel}). Next proposition extends (\ref{1,2}) to any smooth point of $C$.

\begin{prop}\label{semiglob}
Let $\omega \in \Omega^2_{k(S)/k}$ and $u\in \OC$ such that $\bar{u}$ is a separating element
of $k(C)/k$, then at each smooth point $Q$ of $C$, we have 
$$
\res^2_{C,Q}(\omega)=\res_Q\left( (u)\res^1_C(\omega) \right).
$$
\end{prop}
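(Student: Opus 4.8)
The plan is to reduce the general case to the situation already covered by the remark preceding the proposition, namely equation $(\spadesuit)$, which handles every point where $\bar u$ is a local parameter of $\mathcal{O}_{C,Q}$. The only difficulty is a point $Q$ that is smooth on $C$ but at which $\bar u$ fails to be a uniformizer, i.e. $val_Q(\bar u - \bar u(Q)) \geq 2$ (or $\bar u$ has a pole at $Q$). At such a point I cannot directly use a strong or weak $(Q,C)$-pair built from $u$, so the idea is to pass to an auxiliary separating element that \emph{is} a local parameter at $Q$, compare the two one-codimensional residues globally on $C$, and then invoke the definition of $\res^2_{C,Q}$.

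First I would pick a weak $(Q,C)$-pair $(x,y)$ with $x\in\OC$; such a pair exists since $C$ is smooth at $Q$, and by \cite{bou} thm V.16.7.5 the restriction $\bar x$, being a uniformizer of $\mathcal{O}_{C,Q}$, is a separating element of $k(C)/k$. Form $\mu_u := (u)\res^1_C(\omega)$ and $\mu_x := (x)\res^1_C(\omega)$, both elements of $\Omega^1_{k(C)/k}$. By the remark following definition \ref{definres}, at every point $P$ of $C$ where \emph{both} $\bar u$ and $\bar x$ are local parameters we have $(u,v)\res^1_{C,P}(\omega) = \mu_u$ and $(x,y)\res^1_{C,P}(\omega) = \mu_x$; but by theorem \ref{inv2res} the left-hand sides agree (a change of weak $(P,C)$-pairs is exactly a change of variables of the form (CV)), hence $\mu_u$ and $\mu_x$ agree as rational $1$-forms on the nonempty open set of such $P$, and therefore $\mu_u = \mu_x$ in $\Omega^1_{k(C)/k}$. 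Thus $(u)\res^1_C(\omega) = (x)\res^1_C(\omega)$, and in particular $\res_Q\big((u)\res^1_C(\omega)\big) = \res_Q\big((x)\res^1_C(\omega)\big)$.

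Now $\bar x$ \emph{is} a local parameter at $Q$, so $(\spadesuit)$ applies with the element $x$: it gives $\res^2_{C,Q}(\omega) = \res_Q\big((x)\res^1_C(\omega)\big)$. Combining this with the equality of the previous paragraph yields $\res^2_{C,Q}(\omega) = \res_Q\big((u)\res^1_C(\omega)\big)$, as desired; and when $\bar u$ already is a local parameter at $Q$ this is just $(\spadesuit)$ directly, so the two cases together cover all smooth $Q\in C$. The main obstacle I expect is the bookkeeping in the second paragraph: one must make sure the set of points $P$ where both $\bar u$ and $\bar x$ are uniformizers is nonempty (it is the complement of a finite set on the curve $C$) and that theorem \ref{inv2res}, stated for formal forms, legitimately applies to the change between two \emph{rational} weak pairs via proposition \ref{sameexp} and remark \ref{rational}. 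Once those identifications are in place, the argument is purely formal.
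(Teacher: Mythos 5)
There is a genuine gap in your second paragraph: you invoke theorem \ref{inv2res} to conclude that $(u,v)\res^1_{C,P}(\omega)=(x,y)\res^1_{C,P}(\omega)$ at the points $P$ where both $\bar u$ and $\bar x$ are local parameters, and hence that $\mu_u=\mu_x$ as rational $1$-forms on $C$. But theorem \ref{inv2res} asserts the invariance of the \emph{two}-codimensional residue only; the one-codimensional residue is \emph{not} invariant under a general change of weak pairs once $\omega$ has a pole of order $\geq 2$ along $C$ (the paper says so explicitly just before lemma \ref{polesimple}: ``Operation (CV2) might change $1$-residues'', and lemma \ref{polesimple} requires $val_{(y)}(h)\geq -1$). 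Concretely, take $S=\mathbb{A}^2$ with coordinates $(u,v)$, $C=\{v=0\}$, $\omega=u\,du\w dv/v^2$, and $x=u+v$, so that $\bar x=\bar u$. Then $(u)\res^1_C(\omega)=0$, whereas $u=x-v$ and $du\w dv=dx\w dv$ give $\omega=(x/v^2-1/v)\,dx\w dv$, so $(x)\res^1_C(\omega)=-d\bar x\neq 0$. Thus $\mu_u\neq\mu_x$ in general and your reduction collapses; what theorem \ref{inv2res} does give at the good points $P$ is only $\res_P(\mu_u)=\res_P(\mu_x)$, which says nothing about the residue at the bad point $Q$.

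The paper circumvents this by comparing the two expansions \emph{at $Q$ itself} rather than propagating an identity of $1$-forms from elsewhere: setting $u_0=u-\bar u(Q)$ and writing $\bar u_0=\phi(\bar x)$ for a local parameter $\bar x$ of $\mathcal{O}_{C,Q}$, it Hensel-lifts $\bar x$ to $\sigma\in\hOC$, rewrites $\omega=\sum_j f_jv^j\phi'(\sigma)\,d\sigma\w dv$, and notes that $(\sigma,v)$ is related to the genuine weak $(Q,C)$-pair $(x,v)$ by a change of variables of type (\ref{cv}); theorem \ref{inv2res} is then applied to this formal change of variables to get $\res^2_{C,Q}(\omega)=\res_Q\bigl(\bar f_{-1}\phi'(\bar x)d\bar x\bigr)=\res_Q(\mu_u)$, with a separate step ($t=1/u$) handling the poles of $\bar u$. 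If you want to salvage your strategy, you must replace the false identity $\mu_u=\mu_x$ by such a local comparison at $Q$.
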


\begin{rem}\label{avsing0}
In section \ref{secsing}, we generalize the definition of $2$-residue at $P$ along $C$ when $C$ may be singular at $P$.
Using this definition, the assumption ``$C$ is smooth at $Q$'' in proposition \ref{semiglob} can be cancelled (see remark \ref{avsing}).
\end{rem}

\begin{proof}[Proof of proposition \ref{semiglob}]
Set $\mu:=(u)\res^1_C(\omega)=\bar{f}_{-1}d\bar{u}$.

\medbreak

\noindent \textbf{Step 1.} Let $Q\in C$ at which $\bar{u}$ is regular and $(u-\bar{u}(Q),v)$ is a weak $(Q,C)$-pair. Set $u_0:=u-\bar{u}(Q)$. The function $\bar{u}$ is a local parameter of $\mathcal{O}_{C,Q}$.
Moreover, $\mathcal{K}_u=\mathcal{K}_{u_0}$ and $du=du_0$. Consequently, $(u_0,v)\res^1_{C,Q}(\omega)=\bar{f}_{-1}d\bar{u}_0=\mu$ and $\res^2_{C,Q}(\omega)=\res_Q(\mu)$.

\medbreak

\noindent \textbf{Step 2.} Let $Q\in C$ at which $\bar{u}$ is regular but $\bar{u}-\bar{u}(Q)$ is not a local parameter of $\mathcal{O}_{C,Q}$. Set $u_0:=u-\bar{u}(Q)$. We have $\omega=\sum_j f_j v^j du_0\w dv$, but $(u_0,v)$ is not a weak $(Q,C)$-pair. Let $(x,v)$ be a weak $(Q,C)$-pair. The function $\bar{x}$ is a local parameter of $\mathcal{O}_{C,Q}$ and for some $\phi \in k[[T]]$, we have
$$
\bar{u}_0=\phi(\bar{x})\ \textrm{in}\ k(C).
$$

\noindent Let $\sigma$ be the Hensel-lift of $\bar{x}$ in $\mathcal{K}_u$, last relation lifts in $\mathcal{K}_u$ and gives $u_0=\phi(\sigma)$. 
Consequently, we get a new formal expression for $\omega$,
\begin{equation}\label{sigdec}\tag{$\clubsuit$}
\omega=\sum_{j\geq -l}f_jv^j \phi'(\sigma)d\sigma \w dv.
\end{equation}
Notice that $\sigma \in \hOC$ and is congruent to $x$ modulo $(v)$. Therefore, $\sigma$ expands in $k((x))[[v]]$ as
$$
\sigma=x+\sigma_1(x)v+\sigma_2(x)v^2+\cdots
$$
Thus, the pair $(\sigma, v)$ is associated with $(x,v)$ by a change of variables (\ref{cv}). Using (\ref{sigdec}) and theorem \ref{inv2res}, we conclude that
$$
\res^2_{C,Q}(\omega)=
\res_P\left(\bar{f}_{-1}\phi'(\sigma)d\sigma \right)=
\res_P\left(\bar{f}_{-1}\phi'(\bar{x})d\bar{x}\right)=
\res_P(\mu)
.
$$

\medbreak

\noindent \textbf{Step 3.} Let $Q\in C$ at which $\bar{u}$ is not regular. Set $t:=1/u$ and notice that
$$
u=1/t\ \Rightarrow\ k(u)=k(t)\ \Rightarrow\ \mathcal{K}_u=\mathcal{K}_t.
$$
Thus, expansion of $\omega$ is of the form
$$\omega=\sum_{j\geq -l} f_jv^j \left(-\frac{dt}{t^2}\right)\w dv,$$
for some $v$ and
$$(t)\res^1_C (\omega)=-\bar{f}_{-1}\frac{d\bar{t}}{\bar{t}^2}=\mu.$$
Applying the arguments of the previous steps, we conclude the proof.
\end{proof}


\noindent \textbf{Summary.}
\begin{enumerate}
\item A $2$-residue depends only on a curve and a point.
Consequently, from now on, we will deal with $\res^2_{C,P}$ and not $(u,v)\res^2_{C,P}$ (definition \ref{bordel}).
\item A $1$-residue depends only on the curve and the choice of some element $u$ of $\hOC$, whose restriction to $C$ is a separating element of $k(C)/k$. Moreover this object gives a global information on $C$ and in a neighborhood of a point.
From now on, 
we will deal with $(u)\res_C^1$ (definition \ref{definres}) and not with $(u,v)\res^1_{C,P}$ (definition \ref{bordel}). We will also keep using map $\res^1_C$ for $2$-forms having $\mathfrak{m}_{S,C}$-adic valuation greater than or equal to $-1$.
\end{enumerate}

\begin{cor}\label{blowup}
Let $u$ be a function in $\OC$ whose restriction $\bar{u}$ to $C$ is a separating element of $k(C)/k$.
Let $\pi: \widetilde{S}\rightarrow S$ be the blowup
of $S$ at $P$ and $\widetilde{C}$ be the strict transform of $C$ by $\pi$. Then,
$$
(\pi^*u)\res^1_{\widetilde{C}}(\pi^* \omega) =\pi_{|\widetilde{C}}^* \left((u)\res^1_C(\omega) \right).
$$
\end{cor}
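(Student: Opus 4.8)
The statement is local near $P$, so I would work with a weak $(P,C)$-pair $(u_0,v)$ where $u_0 := u - \bar u(P)$ (replacing $u$ by $u-\bar u(P)$ changes nothing: $du = du_0$, $\mathcal{K}_u = \mathcal{K}_{u_0}$, and the $1$-residue is unaffected). Away from $P$, the morphism $\pi$ is an isomorphism, $\widetilde C \to C$ is an isomorphism near the generic point, and $\pi^* u$ restricts on $\widetilde C$ to the pullback of $\bar u$, which is still a separating element of $k(\widetilde C)/k = k(C)/k$; hence both sides of the claimed identity are well-defined rational $1$-forms on $\widetilde C$. Since $\widetilde C \cong C$ as abstract curves via $\pi_{|\widetilde C}$, it suffices to check that the two $1$-forms agree, and because a rational $1$-form on a curve is determined by its Laurent expansion at any single place, I can verify the equality by computing both sides in a suitable Laurent series.

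The computation goes as follows. Write $\omega = f\, du_0 \wedge dv$ with $f \in \mathcal{K}_{u_0}((v))$, so $(u)\res^1_C(\omega) = \bar f_{-1}\, d\bar u_0$ where $f = \sum_j f_j v^j$ (Proposition \ref{prores}). Now choose explicit local data on $\widetilde S$ at a point $\widetilde P$ of $\widetilde C \cap \pi^{-1}(P)$: in one of the standard affine charts of the blowup one has $\pi^* u_0 = u_0$ (or $u_0 v'$, depending on the chart) and $\pi^* v = u_0 v'$ (or $v'$), where $v'$ is a uniformizing parameter of $\mathcal{O}_{\widetilde S, \widetilde C}$ on the strict transform. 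The key point is that the strict transform $\widetilde C$ is cut out so that $\pi^* u_0$ restricts to a uniformizing parameter of $\mathcal{O}_{\widetilde C, \widetilde P}$ and its restriction to $\widetilde C$ equals $\pi_{|\widetilde C}^* \bar u_0$. Substituting $\pi^* u_0, \pi^* v$ into $\omega$ expresses $\pi^*\omega$ as a formal $2$-form in the new pair, and this substitution is exactly a change of variables of the form (CV) in Lemma \ref{lemcv} — in particular it preserves $(v)$-adic valuations by Lemma \ref{yval}. Then Lemma \ref{polesimple} (for the degree $\ge -1$ part) together with the invariance statements already used to prove Proposition \ref{prores} give that the $1$-residue of $\pi^*\omega$ computed in the new pair equals $\pi_{|\widetilde C}^*(\bar f_{-1}\, d\bar u_0)$.

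More structurally, I would phrase the argument as: $(u)\res^1_C$ is, by Definition \ref{definres} and Proposition \ref{semiglob}, a globally defined rational $1$-form on $C$ whose residue at each smooth point $Q$ recovers $\res^2_{C,Q}(\omega)$. The blowup $\pi$ is an isomorphism over $S \setminus \{P\}$ and $\pi_{|\widetilde C}$ is a birational morphism of curves. For any smooth point $\widetilde Q$ of $\widetilde C$ lying over a smooth point $Q \neq P$ of $C$, the equality $\res^2_{\widetilde C, \widetilde Q}(\pi^*\omega) = \res^2_{C,Q}(\omega)$ is immediate since $\pi$ is a local isomorphism there and $2$-residues depend only on a curve and a point (the Summary following Proposition \ref{semiglob}). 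Hence the two rational $1$-forms $(\pi^*u)\res^1_{\widetilde C}(\pi^*\omega)$ and $\pi_{|\widetilde C}^*\big((u)\res^1_C(\omega)\big)$ on $\widetilde C$ have the same residue at all but finitely many places, and a rational $1$-form on a curve with prescribed residues almost everywhere (equivalently: two $1$-forms whose difference is regular outside a point on a curve of the given type) is pinned down once one checks agreement at the remaining finitely many places over $P$; for those, the explicit chart computation of the previous paragraph finishes the job.

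**Main obstacle.** The genuinely delicate point is the bookkeeping on the exceptional locus: one must verify that in the chosen affine chart the coordinate change $(u_0, v) \rightsquigarrow (\pi^* u_0, v')$ really is of the admissible form (CV) — i.e. that $\pi^* u_0$ restricts to a uniformizer on $\widetilde C$ at $\widetilde P$ and that $\pi^* v$ lies in $v' k((u_0))[[v']] \setminus v'^2 k((u_0))[[v']]$ — and to handle the chart where $u_0$ itself pulls back with a vanishing factor, so that $\pi^* u_0$ is no longer a uniformizer and one must first pass to a different separating element (using Step 2 / Step 3 of the proof of Proposition \ref{semiglob}). Once the change of variables is identified as type (CV), invariance of $1$-residues (Lemma \ref{polesimple}) does all the remaining work, and there is no essential difficulty in characteristic $p$ beyond what is already absorbed into Theorem \ref{inv2res}.
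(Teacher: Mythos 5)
Your core observation in the first paragraph --- that a rational $1$-form on a curve is determined by its expansion at a single place, and that $\pi$ is an isomorphism away from the exceptional divisor --- already contains the whole proof, and it is essentially the paper's: since $P$ is a smooth point of $C$, the map $\pi_{|\widetilde C}:\widetilde C\to C$ is an isomorphism, $\pi$ identifies $\mathcal{O}_{S,C}$ with $\mathcal{O}_{\widetilde S,\widetilde C}$ (the generic point of $C$ is not touched by the blowup), hence the expansion $\omega=\sum f_jv^j\,du\wedge dv$ in $\mathcal{K}_u((v))$ of Proposition \ref{prores} pulls back verbatim, and the two $1$-forms are pullbacks of each other. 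Nothing needs to be checked at the places of $\widetilde C$ over $P$: the $(u)$-$1$-residue of Definition \ref{definres} is built purely from data at the generic point of the curve, so agreement on the dense open set $\widetilde C\smallsetminus\pi^{-1}(P)$ (or, in your formulation, at one place away from $P$) forces equality of the rational forms everywhere.

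Where you go astray is in turning this into a program that must be completed over $P$. First, the determination principle you invoke in the third paragraph is false: knowing the residues of a rational $1$-form at almost all (or even all) places does not pin it down --- $d\bar u$ has zero residue everywhere. Equality of two rational $1$-forms follows from agreement on a dense open set, not from agreement of residues. Second, the chart computation you flag as the ``main obstacle'' is not needed, and as set up it would be the hardest possible place to work: $\bar u$ is only assumed to be a separating element of $k(C)/k$, so $\bar u-\bar u(P)$ need not be a local parameter at $P$ (it may even have a pole there), and on the exceptional chart $\pi^*u_0$ acquires a zero along $E$, so the substitution is not of the form (CV) relative to the pair at $\widetilde P$ without further reductions. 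All of this effort is spent verifying the identity at the one point where verification is delicate, when the identity is already forced by its validity everywhere else. So: the approach is salvageable (delete the residue-determination step and the chart computation, and check equality at a single place away from $P$), but as written it substitutes a flawed local argument for what the paper correctly treats as a purely generic, two-line statement.
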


\begin{proof}
Surfaces $\widetilde{S}\smallsetminus E$ and $S\smallsetminus \{P\}$ are isomorphic under $\pi$. Furthermore, recall that $P$ is assumed to be a smooth point of $C$, thus $\pi$ induces an isomorphism between $\widetilde{C}$ and $C$.
The $1$-forms $(\pi^* u)\res^1_{\widetilde{C}}(\pi^* \omega)$ and $(u)\res^1_C(\omega)$ are pullback of each other by $\pi_{|\widetilde{C}}$ and its inverse.
\end{proof}

\begin{cor}\label{blowup2}
Let $(u,v)$ be a weak $(P,C)$-pair and $\pi: \widetilde{S}\rightarrow S$ be the blowup
of $S$ at $P$.
Denote by $\widetilde{C}$ the strict transform of $C$ by $\pi$ and by $Q$ the intersection point between $\widetilde{C}$ and the exceptional divisor. Then,
$$
\res^2_{\widetilde{C},Q}(\pi^*\omega)=\res^2_{C,P}(\omega).
$$
\end{cor}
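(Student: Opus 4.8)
The plan is to reduce the identity of two-codimensional residues to the corresponding identity of one-codimensional residues, which is already available as Corollary \ref{blowup}, and then to invoke the invariance of the residue of a $1$-form on a curve under an isomorphism.

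First I would record the standard facts about the geometry of the blowup. Since $P$ is a smooth point of $C$, the map $\pi$ restricts to an isomorphism $\pi_{|\widetilde C}\colon \widetilde C \xrightarrow{\ \sim\ } C$ sending $Q$ to $P$ (this is exactly what is used in the proof of Corollary \ref{blowup}); in particular $Q$ is a smooth point of $\widetilde C$, and $\widetilde S$ is again a smooth, geometrically integral, quasi-projective surface with $\widetilde C$ an irreducible absolutely reduced curve, so the context of this section applies on $\widetilde S$ as well. Next, because $(u,v)$ is a weak $(P,C)$-pair, $\bar u$ is a uniformizing parameter of $\mathcal{O}_{C,P}$, hence $d\bar u\neq 0$, so $\bar u$ is a separating element of $k(C)/k$ (the same argument as in the proof of Lemma \ref{kuv2}, via \cite{bou} thm V.16.7.5). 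Transporting through the isomorphism $\pi_{|\widetilde C}$, the restriction $\overline{\pi^*u}$ of $\pi^*u$ to $\widetilde C$ is then a separating element of $k(\widetilde C)/k$.

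With these preliminaries in place I would apply Proposition \ref{semiglob} twice: on $S$ at the smooth point $P$ with the separating element $\bar u$, giving $\res^2_{C,P}(\omega)=\res_P\bigl((u)\res^1_C(\omega)\bigr)$; and on $\widetilde S$ at the smooth point $Q$ with the separating element $\overline{\pi^*u}$, giving $\res^2_{\widetilde C,Q}(\pi^*\omega)=\res_Q\bigl((\pi^*u)\res^1_{\widetilde C}(\pi^*\omega)\bigr)$. By Corollary \ref{blowup}, the $1$-form in the second equality is $\pi_{|\widetilde C}^*\bigl((u)\res^1_C(\omega)\bigr)$. Since the residue of a rational $1$-form on a curve at a point is preserved by any isomorphism of curves, applied to $\pi_{|\widetilde C}$ (which maps $Q$ to $P$), we get $\res_Q\bigl(\pi_{|\widetilde C}^*((u)\res^1_C(\omega))\bigr)=\res_P\bigl((u)\res^1_C(\omega)\bigr)$, and chaining the three identities yields $\res^2_{\widetilde C,Q}(\pi^*\omega)=\res^2_{C,P}(\omega)$.

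The argument is essentially formal once Proposition \ref{semiglob} and Corollary \ref{blowup} are in hand; the only point deserving a line of care is checking that the function $u$, which is introduced here merely as the first entry of a weak $(P,C)$-pair, has separating restriction to $C$ (and likewise $\pi^*u$ to $\widetilde C$), so that those two results genuinely apply. I do not anticipate any real obstacle.
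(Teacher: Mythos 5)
Your proof is correct and is exactly the argument the paper intends: Corollary \ref{blowup2} is stated without proof as an immediate consequence of Corollary \ref{blowup}, and the missing steps are precisely the two applications of Proposition \ref{semiglob} together with the invariance of residues of $1$-forms under the isomorphism $\pi_{|\widetilde{C}}$. Your care in checking that $\bar{u}$ (and hence $\overline{\pi^*u}$) is a separating element is appropriate and consistent with how the paper handles the analogous point in Lemma \ref{kuv2}.
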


\section{Residues along a singular curve}\label{secsing}

\subsection*{Context} The context of this section is that of sections \ref{onentwo}, \ref{lolo} and \ref{general} with only one difference, the curve $C$ may be singular at $P$.

\begin{prop}\label{prosing}
Let $\pi: \widetilde{S} \rightarrow S$ be a morphism obtained by a finite sequence of blowups of $S$ and such that the strict transform $\widetilde{C}$ of $C$ by $\pi$ is a desingularization of $C$ at $P$. Then, the sum
$$
\sum_{Q \in \pi^{-1} (\{P\})} \res^2_{\widetilde{C},Q}(\pi^* \omega)
$$
does not depend on the choice of the desingularization $\pi: \widetilde{S} \rightarrow S$.
\end{prop}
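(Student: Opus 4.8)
The plan is to reduce the statement to invariance under a single additional blowup, which is precisely the content of Corollary~\ref{blowup2}, and to pass between two arbitrary desingularizations through a common one that dominates both.

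First I would invoke the classical structure theory of birational morphisms of a regular surface: any two proper birational morphisms obtained by finite sequences of point blowups are dominated by a common one of the same kind. So, given two morphisms $\pi_i\colon\widetilde S_i\to S$ $(i=1,2)$ of the prescribed type, there is a regular surface $\widehat S$ with morphisms $\rho_i\colon\widehat S\to\widetilde S_i$, each a finite composite of point blowups, such that $\pi_1\circ\rho_1=\pi_2\circ\rho_2$ as morphisms $\widehat S\to S$ (elimination of the indeterminacies of $\widetilde S_1\dashrightarrow\widetilde S_2$, together with the fact that a birational morphism of smooth surfaces is a composite of blowups). Writing $\widehat C$ for the strict transform of $C$ in $\widehat S$, each blowup occurring in $\rho_i$ is centered either away from the current strict transform of $C$, where it changes nothing in a neighbourhood of $C$, or at a point of it; in the latter case, if that point lies over $P$ it is a smooth point of the strict transform, which is already nonsingular over $P$, so the strict transform is unchanged there up to isomorphism. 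Hence $\widehat C\to C$ is again a desingularization of $C$ at $P$, all the $2$-residues below are defined, and it suffices to show that the sum in the statement is unaltered when $\pi$ is replaced by $\pi\circ\rho$ for a single blowup $\rho\colon\widehat S\to\widetilde S$ at a closed point $Q$. Applying this repeatedly along $\rho_1$ and then $\rho_2$ gives $\sum_{\pi_1}=\sum_{\widehat S}=\sum_{\pi_2}$, as wanted.

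Then I would treat a single blowup $\rho\colon\widehat S\to\widetilde S$ at a closed point $Q$, with exceptional curve $E$ and strict transform $\widehat C$ of $\widetilde C$, using $\rho^*\pi^*\omega=(\pi\rho)^*\omega$ throughout. If $Q\notin\widetilde C$, then $\widehat C$ is disjoint from $E$, the morphism $\rho$ restricts to an isomorphism from a neighbourhood of $\widehat C$ onto a neighbourhood of $\widetilde C$ carrying the fibre over $P$ bijectively, and the $2$-residues, being local invariants, match term by term. If $Q\in\widetilde C$ with $\pi(Q)=P$, then $Q$ is a smooth point of $\widetilde C$, the curve $\widehat C$ meets $E$ in a single point $Q'$, and Corollary~\ref{blowup2} applied with $(\widetilde S,\widetilde C,Q)$ in place of $(S,C,P)$ gives $\res^2_{\widehat C,Q'}((\pi\rho)^*\omega)=\res^2_{\widetilde C,Q}(\pi^*\omega)$; at every other point of $\widehat C$ over $P$, $\rho$ is a local isomorphism onto its image, so the residue is unchanged. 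Since the points of $\widehat C$ lying over $P$ are exactly $Q'$ together with the images of those points of $\widetilde C$ over $P$ distinct from $Q$, the two sums agree. The case $\pi(Q)\neq P$ is irrelevant, since it affects neither the fibre over $P$ nor a neighbourhood of it.

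The hard part is really the first reduction: one must be sure that two arbitrary finite towers of point blowups over $S$ are dominated by a common such tower, and that the strict transform of $C$ in the dominating surface remains a desingularization of $C$ at $P$, so that the passage to single blowups is legitimate and every residue that occurs is well defined. Both are standard facts for regular surfaces; once they are granted, the remainder is bookkeeping organized around Corollary~\ref{blowup2}.
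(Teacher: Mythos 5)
Your proof is correct, but it follows a genuinely different route from the paper's. You dominate the two desingularizations $\pi_1,\pi_2$ by a common tower of blowups and then induct on single blowups, invoking Corollary~\ref{blowup2} at each step (at a point of the strict transform over $P$, necessarily smooth there) and locality of the $2$-residue everywhere else. The paper never relates the two surfaces $\widetilde S_1$ and $\widetilde S_2$ at all: it observes that, by uniqueness of the normalization of $C$ at $P$, the two strict transforms $\widetilde C_1$ and $\widetilde C_2$ are canonically isomorphic on open sets containing the fibres over $P$, compatibly with the maps to $C$; it then uses Corollary~\ref{blowup} to identify the $1$-residues $(\pi_i^*u)\res^1_{\widetilde C_i}(\pi_i^*\omega)$ as pullbacks of one another under this isomorphism, whence the $2$-residues match point by point and a fortiori in sum. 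Your argument costs the (standard but nontrivial) facts that two finite towers of point blowups over a smooth surface are dominated by a third and that a proper birational morphism of smooth surfaces factors into blowups, whereas the paper only needs uniqueness of normalization on the curve side; in exchange, yours is organized as a clean induction whose single inductive step is exactly Corollary~\ref{blowup2}, and it avoids having to argue that the $1$-residue, a global object on the curve, transforms correctly under the identification of the two strict transforms. Both arguments in fact yield the stronger term-by-term equality of residues under the natural bijection of fibres, not merely equality of the sums.
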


\begin{proof}
Let $\pi_1: \widetilde{S}_1 \rightarrow S$ and $\pi_2: \widetilde{S}_2 \rightarrow S$ be two morphisms as in the wording of the proposition. Denote by $\widetilde{C}_1$ and $\widetilde{C}_2$ the respective strict transforms of $C$ by these two morphisms.
Since both maps $\pi_1$ and $\pi_2$ induce desingularizations of $C$ at $P$, the point $P$ has the same number of preimages by $\pi_1$ and by $\pi_2$.
These preimages respectively denoted by $P_{1,1}, \ldots, P_{1,n}$ and $P_{2,1},\ldots, P_{2,n}$.
By construction of $\pi_1$ and $\pi_2$, there exists an open set $U_1 \subseteq \widetilde{C}_1$ (resp. $U_2 \subseteq \widetilde{C}_2$) containing $P_{1,1}, \ldots, P_{1,n}$ (resp. $P_{2,1}, \ldots, P_{2,n}$) and an isomorphism $\varphi: U_1 \rightarrow U_2$ such that ${\pi_1}_{|U_1}={\pi_2}_{|U_2}\circ \varphi$. Moreover,
for a suitable ordering of indexes, $\varphi$ sends $P_{1,i}$ on $P_{2,i}$ for all $i$.

Let $u$ be an element of $\OC$ whose restriction to $C$ is a separating element of $k(C)/k$. From corollary \ref{blowup}, the $1$-forms $(\pi_1^*u)\res^1_{\widetilde{C}_1}(\pi_1^* \omega)$ and $(\pi_2^*u)\res^1_{\widetilde{C}_2}(\pi_2^* \omega)$ are pullback of each other by $\varphi$ and $\varphi^{-1}$. Consequently,
$$
\forall i \in \{1, \ldots, n\},\ \res^2_{\widetilde{C}_1,P_{1,i}}(\pi_1^* \omega)=\res^2_{\widetilde{C}_2,P_{2,i}}(\pi_2^* \omega).
$$

 \noindent We conclude by adding last equalities for all $i$.
\end{proof}

\begin{defn}\label{singular}
  Under the assumptions of proposition \ref{prosing} the $2$-residue of a $2$-form $\omega \in \Omega^2_{k(S)/k}$ at $P$ along $C$ is defined by
$$
\res^2_{C,P}(\omega)=\sum_{Q \in \pi^{-1} (\{P\})} \res^2_{\widetilde{C},Q}(\pi^* \omega).
$$
\end{defn}

\begin{rem}\label{avsing}
As said in remark \ref{avsing0}, using definition \ref{singular} the statement of proposition \ref{semiglob} holds for singular points of $C$.
To prove this, apply the same arguments as in the proof of proposition \ref{semiglob} on a surface $\widetilde{S}$ such that there exists a map $\pi: \widetilde{S} \rightarrow S$ inducing a normalization of $C$.
\end{rem}

\section{Residue formulas}\label{RF}
We look for an analogous definition of the residue formula on curves (\cite{ser}  lem II.12.3 or \cite{sti} IV 3.3) in the two-dimensional case. We will give three statements about summations of $2$-residues.

\subsection*{Context} In this section, $k$ is an \textbf{algebraically closed} field and $S$ a smooth geometrically integral \textbf{projective} surface over $k$.

\begin{thm}[First Residue formula]\label{RF1}
Let $C$ be a reduced irreducible projective curve embedded in $S$. Then,
$$
\forall \omega \in \Omega^2_{k(S)/k},\ \sum_{P \in C} \res^2_{C,P}(\omega)=0.
$$  
\end{thm}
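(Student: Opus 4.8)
The plan is to reduce the statement to the classical residue formula on the curve $C$ itself. By Proposition \ref{prores}, after choosing any $u\in\OC$ whose restriction $\bar u$ is a separating element of $k(C)/k$, we have a well-defined global rational $1$-form $(u)\res^1_C(\omega)\in\Omega^1_{k(C)/k}$. The key link is Proposition \ref{semiglob} (together with Remark \ref{avsing} extending it to the singular points of $C$), which gives
$$
\res^2_{C,P}(\omega)=\res_P\left((u)\res^1_C(\omega)\right)
$$
at \emph{every} point $P$ of $C$, smooth or singular. Summing over all $P\in C$ and applying the classical residue theorem for $1$-forms on the (projective, possibly singular) curve $C$ — more precisely, on its normalization $\widetilde C$, where $\sum_{\widetilde P}\res_{\widetilde P}(\eta)=0$ for any rational $1$-form $\eta$ on $\widetilde C$, and the residue at $P\in C$ is by definition the sum of residues at the points of $\widetilde C$ over $P$ — yields the result.

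So the steps, in order, are: (1) invoke $\dim_{k(S)}\Omega^2_{k(S)/k}=1$ to write $\omega=f\,du\wedge dv$ and apply Proposition \ref{prores} to get the global $1$-form $\mu:=(u)\res^1_C(\omega)$; (2) fix an arbitrary $P\in C$ and, depending on whether $P$ is smooth or singular, invoke Proposition \ref{semiglob} or its singular extension (Remark \ref{avsing}) to identify $\res^2_{C,P}(\omega)$ with $\res_P(\mu)$; (3) pull $\mu$ back to the normalization $\nu:\widetilde C\to C$, observing that by Definition \ref{singular} the residue $\res^2_{C,P}(\omega)$ equals $\sum_{Q\in\nu^{-1}(P)}\res_Q(\nu^*\mu)$, so that $\sum_{P\in C}\res^2_{C,P}(\omega)=\sum_{Q\in\widetilde C}\res_Q(\nu^*\mu)$; (4) apply the residue formula on the smooth projective curve $\widetilde C$ (\cite{ser} lem II.12.3 or \cite{sti} IV.3.3) to conclude this sum is zero. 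One should also remark that only finitely many $P$ contribute, since $\mu$ is a rational $1$-form and hence has only finitely many poles on $C$, so the sum is well-defined.

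The main subtlety — not a deep obstacle, but the point requiring care — is \textbf{matching the two notions of ``residue at a singular point''}. On one side, $\res^2_{C,P}(\omega)$ at a singular point $P$ is defined (Definition \ref{singular}) via a desingularization, i.e.\ as a sum of $2$-residues on a surface sitting above $S$; on the other side, the classical residue of the $1$-form $\mu$ at a singular point $P\in C$ is the sum of the $1$-residues of $\nu^*\mu$ over the points of $\widetilde C$ lying above $P$. The compatibility of these is exactly what Remark \ref{avsing} and Corollary \ref{blowup} provide: blowing up $S$ at $P$ (repeatedly, until $C$ is resolved) pulls $(u)\res^1_C(\omega)$ back to $(\pi^*u)\res^1_{\widetilde C}(\pi^*\omega)$, and on the resolved curve Proposition \ref{semiglob} applies at each point of $\widetilde C$ over $P$. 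Once this identification is in place, the global statement is purely formal: it is just the statement that the sum of all $1$-residues of a fixed rational differential on a projective curve vanishes.

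Finally, one should note that the $1$-form $\mu$ does depend on the auxiliary choice of $u$, but the conclusion does not: the left-hand side $\sum_P\res^2_{C,P}(\omega)$ is intrinsic to $\omega$, $C$ and $S$, so choosing any convenient separating $u$ (which exists since $C$ is geometrically reduced, hence $k(C)/k$ is separably generated, cf.\ the proof of Lemma \ref{kuv2}) suffices. This gives a clean, short proof with no computation beyond what is already packaged in Sections \ref{general}–\ref{secsing}.
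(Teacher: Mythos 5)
Your proof is correct and follows essentially the same route as the paper's: choose a separating $u$, reduce via Proposition \ref{prores} and Proposition \ref{semiglob} (with Definition \ref{singular} and Remark \ref{avsing} handling singular points through a normalization of $C$) to the classical residue formula for the rational $1$-form $(u)\res^1_C(\omega)$ on the smooth projective model of $C$. Your write-up merely makes explicit the compatibility check at singular points that the paper leaves implicit.
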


\begin{proof}
Let $u$ be an element of $\OC$ whose restriction $\bar{u}$ to $C$ is a separating element of $k(C)$.
If $C$ is smooth, then apply proposition \ref{semiglob} and the classical residue formula on curves to $(u)\res^1_C(\omega)$. Else, use definition \ref{singular} and apply the same arguments to a morphism $\phi:\widetilde{S} \rightarrow S$ inducing a normalization of $C$. 
\end{proof}

\begin{rem}
If $val_C(\omega)\geq -1$, then from proposition \ref{inv1res} and definition \ref{dinv1res}, the $2$-form $\omega$ has a $1$-residue along $C$ denoted by $\res^1_C(\omega)$. Thus, in this particular situation, last theorem is an easy consequence of the classical residue formula on curves applied to the $1$-form $\res^1_C(\omega)$. The nonobvious part of this proposition is that the statement holds even if $val_C(\omega)<-1$.  
\end{rem}

\begin{thm}[Second residue Formula]\label{RF2}
Let $\mathcal{C}_{S,P}$ be the set of germs of irreducible reduced curves embedded in $S$ and containing $P$. Then,
$$
\forall \omega \in \Omega^2_{k(S)/k},\ \sum_{C\in \mathcal{C}_{S,P}} \res^2_{C,P}(\omega)=0.
$$  
\end{thm}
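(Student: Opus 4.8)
The plan is to reduce the statement to the classical one-variable residue formula on curves, exactly as was done for the First Residue Formula \ref{RF1}, but now summing over curves through a fixed point rather than over points on a fixed curve. The crucial observation is that only finitely many germs $C \in \mathcal{C}_{S,P}$ contribute a nonzero term: by Lemma \ref{oukisont}, $\res^2_{C,P}(\omega) = 0$ unless $C$ is a pole of $\omega$ \emph{and} $\omega$ has a second pole meeting $C$ in every neighborhood of $P$. Writing the polar divisor of $\omega$ near $P$ as a finite union of irreducible curves $C_1, \dots, C_r$ through (or near) $P$, only germs among the $C_i$ can give nonzero $2$-residues, so the sum in the statement is finite and well-defined.

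The main step is to choose a convenient rational $2$-form to work with and then blow up. First I would reduce to the case where $\omega$ has only \emph{simple} poles along each $C_i$: one cannot do this directly, but one can pass to a surface $\pi\colon \widetilde{S} \to S$ obtained by a finite sequence of blowups such that the total transform of the polar divisor of $\pi^*\omega$ has normal crossings (embedded resolution of the polar divisor, valid over an arbitrary field in dimension $2$). By Corollary \ref{blowup2} and Definition \ref{singular}, $2$-residues are compatible with this process: $\sum_{Q \in \pi^{-1}(P)} \res^2_{\widetilde{C},Q}(\pi^*\omega) = \res^2_{C,P}(\omega)$ for each curve $C$ through $P$, and curves not through $P$ still contribute $0$ on both sides by Lemma \ref{oukisont}. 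So it suffices to prove the formula on $\widetilde{S}$, i.e. we may assume the polar locus of $\omega$ near each preimage of $P$ is a normal crossings divisor; the germs contributing to the sum are then either a single smooth branch or one of the (at most two) branches of a node. The key technical point is that, when the poles form normal crossings at $P$ with $\omega$ having valuation $\leq -1$ along each, one reduces — after a further blowup separating the two branches if necessary — to the case of a single curve $C_i$, and then one is computing $\res^2_{C_i,P}(\omega)$ via the $1$-residue $(u)\res^1_{C_i}(\omega)$ of Proposition \ref{prores}.

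Once everything is in normal-crossings form, the heart of the argument is the following local computation at $P$ with a system of local parameters $(u,v)$ in which the poles of $\omega$ are among $\{u=0\}$ and $\{v=0\}$. Write $\omega = h\, \frac{du}{u^a}\wedge\frac{dv}{v^b}$ near $P$ with $h$ regular and $a,b \geq 0$. If at most one of $a,b$ is positive, say $b = 0$, then $\res^2_{C,P}(\omega) = 0$ for the branch $C = \{u = 0\}$ by Lemma \ref{oukisont}, and there is nothing to sum. If both $a, b \geq 1$, the two branches $C_1 = \{v=0\}$ and $C_2 = \{u=0\}$ each contribute, and the identity $\res^2_{C_1,P}(\omega) + \res^2_{C_2,P}(\omega) = 0$ follows from the \emph{two-variable Laurent coefficient symmetry}: expanding $h = \sum_{i,j} h_{ij} u^i v^j$, the $(u,v)$-$2$-residue along $C_1 = \{v=0\}$ picks out $h_{a-1,\,b-1}$ (the coefficient of $u^{-1}v^{-1}$ in $h\,u^{-a}v^{-b}$), while the $2$-residue along $C_2 = \{u=0\}$ — computed with the roles of $u$ and $v$ interchanged, using Theorem \ref{inv2res} to pass to the pair $(v,u)$ — picks out the coefficient of $v^{-1}u^{-1}$ in the expansion of $h\,v^{-b}u^{-a}$, which is the \emph{same} coefficient $h_{a-1,b-1}$, but the wedge $dv \wedge du = -\,du\wedge dv$ contributes the sign $-1$. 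Hence the two terms are negatives of one another and cancel. Summing this cancellation over all pairs of branches meeting at each preimage $Q$ of $P$ on $\widetilde{S}$ gives $\sum_{Q\in\pi^{-1}(P)}\sum_{\widetilde{C}\ni Q}\res^2_{\widetilde{C},Q}(\pi^*\omega)=0$, and then compatibility with blowup (Corollary \ref{blowup2}, Definition \ref{singular}) transports this back to $\sum_{C \in \mathcal{C}_{S,P}} \res^2_{C,P}(\omega) = 0$.

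The step I expect to be the main obstacle is the bookkeeping in the normal-crossings reduction: one must check that the resolution process genuinely arranges that the only germs $C \in \mathcal{C}_{S,P}$ with nonzero $2$-residue are branches of the (resolved) polar divisor, and that a germ not appearing in the polar divisor but still passing through $P$ contributes $0$ — this is exactly Lemma \ref{oukisont}, but one must make sure it applies after the blowups, where ``$C$ is the only pole of $\omega$ in a neighborhood of $P$'' must be verified branch by branch. The underlying $2$-variable calculation ($dv\wedge du = -du\wedge dv$ forcing the cancellation of the single shared Laurent coefficient) is short; making the reduction to that calculation airtight is where the real work lies.
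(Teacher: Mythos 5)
Your overall strategy (blow up to separate branches and reduce to a two-variable Laurent-coefficient cancellation at a normal crossing) is the same as the paper's, and your local computation for two branches with normal crossing is exactly the paper's $n=2$ case. But there is a genuine gap in the final accounting, and it sits precisely at the step you wave through with ``compatibility with blowup transports this back.'' After blowing up, the polar divisor of $\pi^*\omega$ acquires \emph{new} components, namely the exceptional curves $E$ (indeed $val_E(\pi^*\omega)=1+\sum_C val_C(\omega)$ is typically negative). At a point $Q_i$ where a strict transform $\widetilde{C}_i$ meets $E$, your pairwise cancellation is between $\widetilde{C}_i$ and $E$, not between two strict transforms. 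So the identity you actually obtain from pairwise cancellation is
$$
0=\sum_{Q\in\pi^{-1}(P)}\;\sum_{\widetilde{C}\ni Q}\res^2_{\widetilde{C},Q}(\pi^*\omega)
=\underbrace{\sum_i\sum_{Q}\res^2_{\widetilde{C}_i,Q}(\pi^*\omega)}_{=\ \sum_{C}\res^2_{C,P}(\omega)}\;+\;\sum_{E\ \mathrm{exc}}\ \sum_{Q\in E}\res^2_{E,Q}(\pi^*\omega),
$$
and the theorem follows only if the second block vanishes. That is not a consequence of anything local: each exceptional curve $E$ is a \emph{projective} curve contained in $\pi^{-1}(P)$, and the vanishing of $\sum_{Q\in E}\res^2_{E,Q}(\pi^*\omega)$ is exactly Theorem \ref{RF1} applied to $E$ — moreover one needs \ref{RF1} in its full strength, since $\pi^*\omega$ may have a pole of order $>1$ along $E$, so the classical simple-pole argument via $\res^1_E$ does not suffice. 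Your proposal never invokes Theorem \ref{RF1}, so as written the argument does not close; the paper's proof makes this the pivotal step (it writes $\sum_i\res^2_{C_i,P}(\omega)=-\sum_i\res^2_{E,Q_i}(\pi^*\omega)$ and then kills the right-hand side with \ref{RF1}), and even flags in a remark that \ref{RF1} ``is necessary to conclude.'' Adding that one invocation (for every exceptional curve produced by your resolution, not just the first blowup) repairs the proof.
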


\begin{rem}
  Notice that this sum is actually finite because almost all $C\in \mathcal{C}_{S,P}$ is not a pole of $\omega$ thus the $2$-residue at $P$ along this curve is zero.
\end{rem}

\begin{proof}
Let $\omega\in \Omega^2_{k(S)/k}$ and $C_1,\ldots,C_n \in \mathcal{C}_{S,P}$ be the set of its poles in a neighborhood of $P$. 
We will prove the theorem by induction on $n$.

\smallbreak

\noindent \textbf{Step 1.} Assume that, for each pair of curves $C_i,C_j$ with $i\neq j$, their intersection multiplicity at $P$ is one.

\smallbreak

\noindent \textbf{If $\mathbf{n=1}$.} From lemma \ref{oukisont} the sum is obviously zero.

\smallbreak

\noindent \textbf{If $\mathbf{n=2}$.} Let $u_1,u_2$ be respectively local equations of $C_1$ and $C_2$. Then, $(u_1,u_2)$ is a strong $(P,C_2)$-pair and $(u_2,u_1)$ a strong $(P,C_1)$-pair, because $C_1$ and $C_2$ are assumed to have a normal crossing at $P$. Thus, for some $h\in \OP$ and some positive integers $n_1$ and $n_2$, we have
$$\omega=h\ \frac{du_1}{u_1^{n_1}}\w \frac{du_2}{u_2^{n_2}}.$$

\noindent Expand $h$ in Taylor series $h=\sum h_{ij}u_1^iu_2^j$. Using the anticommutativity of the external product,
a brief computation gives $\res^2_{C_2,P}(\omega)=-\res^2_{C_1,P}(\omega)=h_{n_1-1,n_2-1}$.

\smallbreak

\noindent \textbf{If $\mathbf{n\geq 2}$.} Consider $\pi:\ \widetilde{S}\rightarrow S$ the blowup of $S$ at $P$. Denote by $E$ the exceptional divisor, by $\widetilde{C}_i$ the strict transform of $C_i$ and by $Q_i$ the intersection point between $E$ and $\widetilde{C}_i$. Points $Q_i$'s are all distinct and curves $E$ and $\widetilde{C}_i$ have normal crossing at $Q_i$. 
 The curve $E$ is projective and the $\widetilde{C}_i$'s are the only poles of $\pi^* \omega$ which cross $E$. Furthermore, the previous case entails $\res^2_{\widetilde{C}_i,Q_i}(\pi^* \omega)=-\res^2_{E,Q_i}(\pi^* \omega)$ for all $i$. Consequently,
from corollary \ref{blowup2}, we have
$$
\sum_{i=1}^n \res^2_{C_i,P}(\omega)=\sum_{i=1}^n \res^2_{\widetilde{C_i},Q_i}(\pi^* \omega)=-\sum_{i=1}^n \res^2_{E,Q_i}(\pi^*\omega)
$$

\noindent and last sum is zero from theorem \ref{RF1}.

\smallbreak

\noindent \textbf{Step 2.} In the general case, a curve $C_i$ might be singular at $P$ or intersect the other $C_j$'s with higher multiplicity. After a finite number of blowups, using definition \ref{singular} and applying same arguments to the resolution tree,  we get the expected result.
\end{proof}

\begin{rem}
Notice that the valuation of $\pi^* \omega$ along the exceptional divisor $E$ is not always greater than or equal to $-1$. This valuation is given by the formula
$$
val_E(\pi^* \omega)=1+\sum_{C\in \mathcal{C}_{S,P}} val_C(\omega),
$$
where the set $\mathcal{C}_{S,P}$ is that of theorem \ref{RF2}.
For a proof of this formula see \cite{H} prop V.3.3 and V.3.6.
Therefore, theorem \ref{RF1} is necessary to conclude in the first step of last proof.
\end{rem}

\noindent To state the third residue formula, we need to extend the definition of $2$-residues at a point along a curve to $2$-residues at a point along a divisor.

\begin{defn}
  Let $D=n_1 C_1+\cdots+n_p C_p$ be a divisor on $S$ and $\omega \in \Omega^2_{k(S)/k}$. We define the $2$-residue of $\omega$ at $P$ to be
$$
\res^2_{D,P}(\omega):=\sum_{i=1}^n\res^2_{C_i,P}(\omega).
$$
\end{defn}

\begin{rem}
Notice that coefficients $n_i$'s of $D$ are not involved in this definition.
Actually, $\res^2_{D,P}$ depends only on the support of the $D$. The most logic notation would have been ``$\ \! \res^2_{\supp (D)}$'' which is too heavy.
\end{rem}

\begin{thm}[Third residue formula]\label{RF3}
Let $D_a, D_b$ be two divisors such that the set $\supp(D_a)\cap \supp (D_b)$ is finite.
Let $\Delta$ be the zero-cycle given by the scheme-theoretic intersection $\Delta:=D_a \cap D_b$. Set $D:=D_a+D_b$.
Then,
$$
\forall \omega \in \Omega^2(-D),\ \sum_{P\in \supp \Delta} \res^2_{D_a, P}(\omega)=
\sum_{P\in S}\res^2_{D_a,P}(\omega) =0.
$$
\end{thm}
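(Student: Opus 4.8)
I would prove Theorem \ref{RF3} by reducing it to the first residue formula (Theorem \ref{RF1}) via a summation over the curves in $\supp(D_a)$. The key observation is that since $\omega \in \Omega^2(-D)$, i.e. $\omega$ has poles bounded by $D = D_a+D_b$ along its support, for each irreducible component $C$ of $\supp(D_a)$ the $1$-form $\res^1_C(\omega)$ (or rather $(u)\res^1_C(\omega)$ for a suitable separating element $u\in\hOC$) is a \emph{rational} $1$-form on $C$. Its poles on $C$ can occur only at points where $C$ meets the other poles of $\omega$. But the other poles of $\omega$ lie in $\supp(D_a)\cup\supp(D_b)$, and the intersections of $C$ with the rest of $\supp(D_a)$ together with $\supp(D_b)$ form a finite set (using the hypothesis that $\supp(D_a)\cap\supp(D_b)$ is finite, and that two distinct irreducible curves on a surface meet in finitely many points). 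Hence $\res^1_C(\omega)$ is a rational $1$-form on $C$, its sum of residues over all $P\in C$ vanishes by the classical residue formula on curves, and this sum equals $\sum_{P\in C}\res^2_{C,P}(\omega)$ by Proposition \ref{semiglob} (in the form extended to singular points, Remark \ref{avsing}), equivalently by Theorem \ref{RF1} applied to $C$.

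**Key steps, in order.** First I would fix $D_a = m_1 C_1 + \cdots + m_r C_r$ and note that by definition $\res^2_{D_a,P}(\omega) = \sum_i \res^2_{C_i,P}(\omega)$, so
\[
\sum_{P\in S}\res^2_{D_a,P}(\omega) = \sum_{i=1}^r \sum_{P\in S}\res^2_{C_i,P}(\omega) = \sum_{i=1}^r \sum_{P\in C_i}\res^2_{C_i,P}(\omega),
\]
the last equality because $\res^2_{C_i,P}(\omega)=0$ for $P\notin C_i$. Second, apply Theorem \ref{RF1} to each projective curve $C_i$: each inner sum $\sum_{P\in C_i}\res^2_{C_i,P}(\omega)$ is zero. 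This already gives $\sum_{P\in S}\res^2_{D_a,P}(\omega)=0$. Third, I must check that the first sum in the statement, $\sum_{P\in\supp\Delta}\res^2_{D_a,P}(\omega)$, equals the full sum $\sum_{P\in S}\res^2_{D_a,P}(\omega)$; equivalently, that $\res^2_{D_a,P}(\omega)=0$ for every $P\notin\supp\Delta$. Here is where the hypothesis $\omega\in\Omega^2(-D)$ enters: if $P\in\supp(D_a)$ but $P\notin\supp(D_b)$, then in a neighborhood of $P$ the only poles of $\omega$ are the components of $D_a$ through $P$; if moreover $P$ lies on only one such component $C_i$ (i.e. $P$ is not a crossing point of $\supp D_a$ with itself), then $C_i$ is the only pole of $\omega$ near $P$ and Lemma \ref{oukisont} gives $\res^2_{C_i,P}(\omega)=0$. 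For the finitely many self-crossing points of $\supp(D_a)$ that are not in $\supp\Delta$, I would use the normal-crossing computation from the proof of Theorem \ref{RF2} (Step 1, case $n=2$, plus the blowup reduction) to see the residues along the two branches cancel, or simply absorb them by noting the finite set $\supp\Delta$ can be enlarged to include them without changing the truth of the two displayed equalities — more cleanly, I would just state that $\res^2_{D_a,P}(\omega)$ is supported on $\supp(D_a)\cap\big(\supp(D_b)\cup \operatorname{Sing}(\supp D_a)\big)$, which is finite, and reconcile this with $\supp\Delta$.

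**Main obstacle.** The only delicate point is the third step: making precise \emph{where} $\res^2_{D_a,P}(\omega)$ can be nonzero and matching it with $\supp\Delta$. The clean argument is that $\res^1_{C_i}(\omega)$ is a rational $1$-form on $C_i$ whose polar set is contained in $C_i\cap(\supp(D)\setminus C_i)$, which is finite; outside this set Lemma \ref{oukisont} forces the $2$-residue to vanish, and $C_i\cap(\supp(D)\setminus C_i)\cap\supp(D_a)$ is exactly the contribution of $C_i$ to $\supp\Delta$ up to the self-intersection locus of $\supp(D_a)$. I would handle the self-intersection points by the same blowup-and-normal-crossings bookkeeping used in Theorem \ref{RF2}, where one checks the two local $2$-residues along the two branches of $\supp(D_a)$ at such a point are opposite and hence cancel in $\res^2_{D_a,P}$. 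With that in hand both asserted equalities follow immediately, the second ($=0$) from Theorem \ref{RF1} as above, and the first from the localization of the support just described.
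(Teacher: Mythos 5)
Your proof is correct and follows essentially the same route as the paper: Theorem \ref{RF1} applied to each irreducible component of $\supp(D_a)$ gives the vanishing of the total sum over $S$, and the first equality comes from the vanishing of $\res^2_{D_a,P}(\omega)$ at every $P\notin\supp\Delta$. The only difference is in that last step, where the paper simply invokes Theorem \ref{RF2} (at such a $P$ all poles of $\omega$ through $P$ lie in $\supp(D_a)$, so $\res^2_{D_a,P}(\omega)$ coincides with the full sum of Theorem \ref{RF2} and is therefore zero), which disposes of your single-branch case and your self-crossing case uniformly, without reopening the normal-crossing and blowup computation.
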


\begin{proof}
From theorem \ref{RF1}, for each irreducible component $C_i$ of the support of $D_a$, we have $\sum_{P\in C_i}\res^2_{C_i,P}(\omega)=0$. From theorem \ref{RF2}, if a point $P$ is out of the support of $\Delta$, then $\res^2_{D_a,P}(\omega)=0$. A combination of both claims concludes the proof.
\end{proof}

\begin{rem}\label{symsym}
From theorem \ref{RF2} and under the assumptions of theorem \ref{RF3}, for all $P$ in $S$, we have
$$
\res^2_{D_a,P}(\omega)=-\res^2_{D_b,P}(\omega).
$$ 
Consequently, the statement in theorem \ref{RF3} holds replacing $D_a$ by $D_b$.
\end{rem}

\part{Application to coding theory}\label{partiecodes}



\section*{Notations}
Let $X$ be a variety defined over finite a field $k$, we denote by $\textrm{Div}_{k}(X)$ the group of rational Weil divisors on $X$. That is, the free abelian group spanned by irreducible one-codimensional closed subvarieties of $X$.
If $G \in \textrm{Div}_{k}(X)$, then we use the following notations.
\begin{enumerate}
\item $G^+$ denotes the effective part of $G$.
\item $L(G)$ denotes the Riemann-Roch space of rational functions
$$
L(G):= \left\{ f \in k(X),\ (f)+G\geq 0  \right\} \cup \{ 0 \}.
$$
\item $\Omega^2(G)$ denotes the Riemann-Roch space of rational $2$-forms
$$
\Omega^2(G):= \left\{ \omega \in \Omega^2_{k(X)/k},\ (\omega)-G\geq 0  \right\} \cup \{ 0 \}.
$$
\item If $G' \in \textrm{Div}_{k}(X)$ such that the supports of $G$ and $G'$ have no common component in a neighborhood of $P$, we denote by $m_P(G,G')\in \Z$ the intersection multiplicity of these divisors at $P$.
\end{enumerate}

\section{About codes from curves, classical constructions}\label{codesoncurves}
In this section $C$ is a smooth projective absolutely irreducible curve over a finite field $\F_q$. Let $G$ be a rational divisor on $C$ and $P_1,\ldots, P_n$ be a family of rational points of $C$ avoiding the support of $G$. Set $D:=P_1+\cdots +P_n \in \textrm{Div}_{\F_q}(C)$ and
$$
\textrm{ev}_D: \left\{
\begin{array}{ccl}
L(G) & \rightarrow & \F_q^n \\    
 f & \mapsto & {(f(P_i))}_{i=1\ldots n}
\end{array}
\right.
,\ \ 
\res_D: \left\{
\begin{array}{ccl}
\Omega^1(G-D) & \rightarrow & \F_q^n \\    
\omega & \mapsto & {(\res_{P_i}(\omega))}_{i=1\ldots n}
\end{array}.
\right. 
$$  
We define the codes $C_L(D,G):=\textrm{Im}(\textrm{ev}_D)$ and 
$C_{\Omega}(D,G):=\textrm{Im}(\res_D)$ called respectively functional code and differential code.

\noindent Both constructions are linked by the following properties.
\begin{enumerate}
\item[\textbf{(OR):}]\label{orth} $C_{\Omega}(D,G)=C_L(D,G)^{\bot}$.
\item[\textbf{(L$\mathbf{\Omega}$):}]\label{diff=func} For some canonical divisor $K$, we have $C_{\Omega}(D,G)=C_L(D,K-G+D)$.
\end{enumerate}
See \cite{step},  \cite{sti} or \cite{TV} for the proofs of these statements.
Relation \textbf{(OR)} is a consequence of the residue formula for inclusion ``$\subseteq$'' and of Riemann-Roch's theorem for the reverse one. This relation is used in almost all algebraic decoding algorithms (see \cite{hohpel}).
Relation \textbf{(L$\mathbf{\Omega}$)} is a consequence of the weak approximation theorem (\cite{sti} thm I.3.1). It allows to restrict the study of algebraic-geometric codes to only one class, for example functional codes which seems to be easier to study.
The goal of this second part is to extend some of these statements to surfaces.

\section{Algebraic-geometric codes on surfaces}\label{codesonsurfaces}

From now on, $S$ denotes a smooth geometrically integral projective surface over $\F_q$ and $\overline{S}:=S\times_{\F_q}\overline{\F}_q$. Moreover, $G$ denotes a rational divisor on $S$ and $P_1,\ldots , P_n$ a set of rational points of $S$ avoiding the support of $G$.
Set $\Delta:=P_1+\cdots +P_n$. Notice that $\Delta$ is not a divisor but a $0$-cycle.
Most of the difficulties we will meet come from this difference of dimension between $G$ and $\Delta$.

\subsection{Functional codes}
As said in the introduction, the functional construction of codes extends to higher-dimensional varieties (see \cite{man} I.3.1). Define the map
$$
\textrm{ev}_{\Delta}:
\left\{
\begin{array}{ccl}
L(G) & \rightarrow & \F_q^n \\
f & \rightarrow & (f(P_1),\ldots ,f(P_n)).   
\end{array}
\right.
$$
The functional code is $C_L(\Delta,G):=\textrm{Im}(ev_{\Delta})$.
The study of such codes is really more complicated than that of codes on curves.
Particularly, finding a minoration of the minimal distance becomes a very difficult problem.
For more details about this topic, see references cited in introduction.
\subsection{Differential codes}

To define differential codes, we need more than $G$ and $\Delta$. We want to evaluate $2$-residues of some rational differential forms with prescribed poles.
Unfortunately, $2$-residues depend not only on a point but on a flag $P\in C\subset S$.
 Thus, we have to input another divisor.

\begin{defn}
  Let $D \in \textrm{Div}_{\F_q}(S)$ and assume that $D$ is the sum of two divisors $D_a,D_b$ whose supports have no common irreducible component. Then, one can define the map
$$
\res^2_{D_a,\Delta}:
\left\{
\begin{array}{ccl}
\Omega^2(G-D) & \rightarrow & \F_q^n \\
\omega & \mapsto & (\res^2_{D_a,P_1}(\omega),\ldots ,
\res^2_{D_a,P_n}(\omega)).
\end{array}
\right.
$$
The differential code is defined by $C_{\Omega}(\Delta,D_a,D_b,G):=\textrm{Im}(\res^2_{D_a,\Delta})$.
\end{defn}

\begin{rem}
We can also define a map $\res^2_{D_b,\Delta}$, but from theorem \ref{RF2}, we have 
$\res^2_{D_b,\Delta}=-\res^2_{D_a,\Delta}$. Thus, both maps have the same image and 
$$
C_{\Omega}(\Delta,D_a,D_b,G)=C_{\Omega}(\Delta,D_b,D_a,G).
$$  
\end{rem}

\subsection{$\Delta$-convenience}
Actually, if one chooses an arbitrary divisor $D$, last definition is not very convenient.
Recall that, from lemma \ref{oukisont} and theorem \ref{RF2}, $\res^2_{D_a,P_i}(\omega)$ is nonzero only if the supports of $D_a^+$ and $D_b^+$ intersect at $P_i$.
Therefore, if we want to have a code which is linked to the functional code $C_L(\Delta,G)$, the divisor $D$ must be itself \textit{related} to the $0$-cycle $\Delta$.
We will first define the notion of $\Delta$-convenient pair of divisors. Afterwards, we will give a criterion of $\Delta$-convenience. Although this one may look ugly, it is actually easy to handle.

\medbreak

Let $D_a,D_b$ be a pair of $\F_q$-rational divisors on $S$ whose supports have no common component and set $D:=D_a+D_b$. From now on, $\mathcal{F}$ denotes the sheaf on $\overline{S}$ defined by
$$
\mathcal{F}(U)=\left\{\omega \in \Omega^2_{\overline{\F}_q(\overline{S})/\overline{\F}_q},\ (\omega_{|U})\geq -D_{|U}  \right\}.
$$
Moreover, for all point $P\in \overline{S}$, the stalk of $\mathcal{F}$ at $P$ is denoted by $\mathcal{F}_P$ (see \cite{H} II.1 p. 62 for definition of ``stalk'').
Notice that $H^0(\overline{S},\mathcal{F})=\Omega^2(-D)\otimes_{\F_q}\overline{\F}_q$.

\begin{defn}\label{Deltac}
The pair $(D_a,D_b)$ is said to be $\Delta$-convenient if it satisfies the following conditions.
\begin{enumerate}
\item[$(i)$] Supports of $D_a$ and $D_b$ have no common irreducible components.
\item[$(ii)$] For all point $P\in \overline{S}$, the map $\res^2_{D_a,P}:\ \mathcal{F}_P \rightarrow \overline{\F}_q$ is $\mathcal{O}_{\overline{S},P}$-linear.
\item[$(iii)$] This map is surjective for all $P\in \supp (\Delta)$ and zero elsewhere. 
\end{enumerate}
\end{defn}

\begin{rem}
The structure of $\mathcal{O}_{\overline{S},P}$-module of $\overline{\F}_q$ is induced by the morphism $f \rightarrow f(P)$. Thus, if the map $\res^2_{D_a,P}$ satisfies $(ii)$, then it vanishes on $\mathfrak{m}_{\overline{S},P} \mathcal{F}_P$. 
\end{rem}

\begin{rem}\label{Da-Db}
Let $P\in \overline{S}$ and $\omega \in \mathcal{F}_P$. From remark \ref{symsym}, we have
$$\res^2_{D_a,P}(\omega)=-\res^2_{D_b,P}(\omega).$$ 
Consequently, if at a point $P\in \overline{S}$, the map $\res^2_{D_b,P}$ is $\OP$-linear and satisfies $(ii)$ and $(iii)$, so does $\res^2_{D_a,P}$.
\end{rem}

\begin{prop}[Criterion for $\Delta$-convenience]\label{crit}
Let $(D_a,D_b)$ be a pair of $\F_q$-rational divisors having no common irreducible component on $S$ and set $D:=D_a+D_b$. If $(D_a,D_b)$ satisfies the following conditions, then it is a $\Delta$-convenient pair. 
\begin{enumerate}
\item For each $P\in \supp (\Delta)$, there exists an irreducible curve $C$ smooth at $P$ such that in a neighborhood $U_P$ of $P$, either ${(D_a^+)}_{|U_P}$ or ${(D_b^+)}_{|U_P}$ equals $C_{|U_P}$ and $m_P(C,D-C)=1$.
\item\label{2} For each $P\in \overline{S}\smallsetminus \supp (\Delta)$, either $D_*=D_a$ or $D_*=D_b$ satisfies the following conditions.
For each $\overline{\F}_q$-irreducible component $\overline{C}$ of $D_*^+$ containing $P$,
  \begin{enumerate}
  \item\label{a} the curve $\overline{C}$ is smooth at $P$,
  \item\label{b} this curve $\overline{C}$ appears in $D_*$ with coefficient one,
  \item\label{c} $m_P(\overline{C}, D-\overline{C})\leq 0$.
  \end{enumerate}
\end{enumerate}
\end{prop}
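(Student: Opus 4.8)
The plan is to check conditions $(ii)$ and $(iii)$ of Definition~\ref{Deltac} pointwise on $\overline{S}$, condition $(i)$ being assumed. Since $\overline{\F}_q$ is algebraically closed, every closed point $P$ of $\overline{S}$ is rational, so at each such $P$ one may use a system of local parameters and the Laurent machinery of sections~\ref{lolo}--\ref{general}. The computation used at every point is the following: suppose $\overline{C}$ is a curve through $P$, smooth at $P$, occurring in $D$ with coefficient one; choose a strong $(P,\overline{C})$-pair $(u,v)$ (possible since $\overline{C}$ is smooth at $P$) and write $\omega=h\,du\w dv$ with $h\in\overline{\F}_q(\overline{S})$. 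Then $du\w dv$ generates $\Omega^2$ near $P$, so $\omega\in\mathcal{F}_P$ forces $val_{\overline{C}}(h)\geq-1$; expanding $h=\sum_{j\geq-1}h_j(u)v^j$ and invoking Definition~\ref{bordel} and Proposition~\ref{sameexp} gives
$$
\res^2_{\overline{C},P}(\omega)=\res_P\big(\overline{vh}\,d\bar u\big),
$$
where $\overline{vh}=h_{-1}$ is the restriction to $\overline{C}$ of the regular function $vh$. Throughout I will freely use Remark~\ref{Da-Db}, i.e. $\res^2_{D_a,P}=-\res^2_{D_b,P}$ on $\mathcal{F}_P$, so that at each point it suffices to argue with whichever of $D_a,D_b$ the hypotheses single out.

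First I would treat $P\in\supp(\Delta)$. Hypothesis~1 furnishes a curve $C$ smooth at $P$ with $(D_a^+)_{|U_P}=C_{|U_P}$, say, and $m_P(C,D-C)=1$, so $C$ has coefficient one in $D$. Any other component of $\supp(D_a)$ through $P$ then occurs in $D$ with a negative coefficient, hence is a zero and not a pole of every $\omega\in\mathcal{F}_P$ and contributes nothing; thus $\res^2_{D_a,P}=\res^2_{C,P}$ on $\mathcal{F}_P$. Translating $(\omega)\geq-D$ near $P$ into divisibility conditions on $h$, one finds that $\mathcal{F}_P$ consists exactly of the forms $h\,du\w dv$ with $vh=\rho\,\tilde q$, where $\tilde q$ ranges over $\mathcal{O}_{\overline{S},P}$ and $\rho\in\overline{\F}_q(C)$ is a fixed ratio of powers of local equations of the remaining components of $D$ near $P$; the intersection-multiplicity bookkeeping yields $val_P(\rho)=-m_P(C,D-C)=-1$, so $\rho$ has a simple pole at $P$. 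Hence $\res^2_{C,P}(h\,du\w dv)=\res_P(\rho\,\overline{\tilde{q}}\,d\bar u)=\overline{\tilde{q}}(P)\,\res_P(\rho\,d\bar u)$ with $\res_P(\rho\,d\bar u)\neq0$. This map is $\mathcal{O}_{\overline{S},P}$-linear and surjective onto $\overline{\F}_q$, which gives $(ii)$ and the surjectivity in $(iii)$ at $P$.

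Then I would treat $P\notin\supp(\Delta)$. By hypothesis~2 and Remark~\ref{Da-Db} I may assume $D_*=D_a$ and that every $\overline{\F}_q$-component $\overline{C}$ of $D_a^+$ through $P$ is smooth at $P$, has coefficient one in $D_a$---hence in $D$, since $\supp(D_a)$ and $\supp(D_b)$ share no component---and satisfies $m_P(\overline{C},D-\overline{C})\leq0$. Then $\res^2_{D_a,P}(\omega)=\sum_{\overline{C}}\res^2_{\overline{C},P}(\omega)$, and for each $\overline{C}$ the core computation gives $\res^2_{\overline{C},P}(\omega)=\res_P(\overline{vh}\,d\bar u)$ with
$$
val_P(\overline{vh})\geq -m_P(\overline{C},D-\overline{C})\geq 0,
$$
the first inequality obtained by writing $vh$ as a ratio of powers of local equations of the components of $D-\overline{C}$ and reading their orders along $\overline{C}$ as intersection multiplicities with $\overline{C}$. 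So $\overline{vh}\,d\bar u$ is regular at $P$, its residue vanishes, and $\res^2_{D_a,P}$ is identically zero on $\mathcal{F}_P$: this is trivially $\mathcal{O}_{\overline{S},P}$-linear, giving $(ii)$, and it is the vanishing part of $(iii)$.

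The part I expect to require genuine care is the local dictionary between ``$(\omega)\geq-D$ near $P$'' and divisibility of $h$, especially the handling of negative coefficients of $D$ (which turn into \emph{vanishing} conditions on $\omega$ rather than poles), together with the two valuation computations $val_P(\overline{vh})\geq-m_P(\overline{C},D-\overline{C})$ and its sharp instance $val_P(\rho)=-1$, which rest on identifying the order at $P$ of the restriction to $\overline{C}$ of a local equation of a curve $C'$ with $m_P(\overline{C},C')$. Once this is in place, the linearity and surjectivity statements follow from the elementary identity $\res_P(f\eta)=f(P)\,\res_P(\eta)$, valid for $\eta$ with at most a simple pole and $f$ regular at $P$.
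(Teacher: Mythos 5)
Your proof is correct and follows essentially the same route as the paper: reduce to a single smooth component of coefficient one via Remark \ref{Da-Db}, identify the order at $P$ of the restricted $1$-residue with the intersection multiplicity $m_P(\overline{C},(\omega)+\overline{C})$ (this is exactly the paper's Lemma \ref{valres}, which you re-derive inline together with Corollary \ref{linear}), and conclude linearity and surjectivity at points of $\supp(\Delta)$ and vanishing elsewhere. If anything, you are more explicit than the paper on the surjectivity part of condition $(iii)$, by exhibiting the generator $\rho\,d\bar{u}$ with nonzero residue; the only cosmetic slip is calling $vh$ ``regular'' at $P$ when it is only regular along $\overline{C}$, which your subsequent valuation computation correctly accounts for.
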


\begin{rem}
In condition (\ref{2}) of this criterion, the divisor $D_*$ may be zero in a neighborhood of $P$ (actually that is what happens at almost all point $P$). In this situation, conditions (\ref{a}), (\ref{b}) and (\ref{c}) are obviously satisfied.
\end{rem}

\noindent For the proof of this proposition, we need next lemma and its corollary.

\begin{lem}\label{valres}
Let $\overline{C}$ be an irreducible curve over $\overline{\F}_q$ embedded in $\overline{S}$ and $P$ be a smooth point of $\overline{C}$. Let $\omega \in \Omega^2_{\overline{\F}_q(\overline{S})/\overline{\F}_q}$ having a simple pole along $\overline{C}$. Then
$$
val_P(\res^1_{\overline{C}}(\omega))=m_P(\overline{C},(\omega)+\overline{C}),
$$  
where $val_P$ denotes the valuation at the point $P$ on $\Omega^1_{\overline{\F}_q(\overline{C})/\overline{\F}_q}$.
\end{lem}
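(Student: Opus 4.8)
The plan is to work in a strong $(P,\overline{C})$-pair $(u,v)$, where $u$ is a local equation of $\overline{C}$ near $P$ in the sense that $\bar{u}$ (wait — one must be careful: here $v$ is a uniformizer of $\mathcal{O}_{\overline{S},\overline{C}}$, so $v$ cuts out $\overline{C}$). Let me instead pick $(u,v)$ a strong $(P,\overline{C})$-pair so that $v$ is a local equation of $\overline{C}$ and $\bar u$ is a local parameter of $\mathcal{O}_{\overline{C},P}$. Since $\omega$ has a simple pole along $\overline{C}$, by Proposition~\ref{inv1res} (or by the general Definition~\ref{bordel}) we may write
$$
\omega = h\, du\wedge\frac{dv}{v},\qquad h\in\mathcal{O}_{\overline{S},\overline{C}},
$$
and then $\res^1_{\overline{C}}(\omega)=\bar h\, d\bar u$ where $\bar h\in k(\overline{C})$ is the restriction of $h$. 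The first step is thus to relate $val_P(\bar h\, d\bar u)$ to the data $m_P(\overline{C},(\omega)+\overline{C})$. Since $\bar u$ is a local parameter of $\mathcal{O}_{\overline{C},P}$, $d\bar u$ has valuation $0$ at $P$ (characteristic issues aside — note $P$ smooth on $\overline{C}$ and $\bar u$ a uniformizer give $d\bar u\neq 0$ in $\Omega^1_{k(\overline{C})/k}$ locally, by the same argument as in the proof of Lemma~\ref{kuv2}). Hence $val_P(\res^1_{\overline{C}}(\omega)) = val_P(\bar h)$, the order of vanishing of the function $\bar h$ on $\overline{C}$ at $P$.

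**Computing the intersection multiplicity.** Next I would compute $m_P(\overline{C},(\omega)+\overline{C})$. The divisor $(\omega)$ of the $2$-form near $\overline{C}$: writing $\omega = h\,du\wedge\frac{dv}{v} = \frac{h}{v}\,du\wedge dv$, and since $du\wedge dv$ is a local generator of $\Omega^2$ at $P$ (as $(u,v)$ is a system of local parameters), we get that near $P$,
$$
(\omega) = \operatorname{div}\!\Big(\frac{h}{v}\Big) = \operatorname{div}(h) - \overline{C}
$$
as divisors on $\overline{S}$ in a neighborhood of $P$ (here $\operatorname{div}(v)=\overline{C}$ locally since $v$ is a local equation). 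Therefore $(\omega)+\overline{C} = \operatorname{div}(h)$ near $P$, and so $m_P(\overline{C},(\omega)+\overline{C}) = m_P(\overline{C},\operatorname{div}(h))$. The key reduction is then that the intersection multiplicity of the curve $\overline{C}=\{v=0\}$ with the divisor of a function $h$ regular along $\overline{C}$, taken at the smooth point $P$ of $\overline{C}$, equals the order of vanishing at $P$ of the restriction $\bar h=h|_{\overline{C}}$ as a function on $\overline{C}$. This is a standard local computation: $m_P(\overline{C},\operatorname{div}(h))=\dim_k \mathcal{O}_{\overline{S},P}/(v,h) = \operatorname{length}\,\mathcal{O}_{\overline{C},P}/(\bar h) = val_P(\bar h)$, using that $\mathcal{O}_{\overline{C},P}=\mathcal{O}_{\overline{S},P}/(v)$ is a DVR (as $P$ is smooth on $\overline{C}$) — the point being that intersection multiplicity with a smooth curve restricts to the order function on that curve.

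**Assembling and isolating the obstacle.** Combining the two computations gives $val_P(\res^1_{\overline{C}}(\omega)) = val_P(\bar h) = m_P(\overline{C},\operatorname{div}(h)) = m_P(\overline{C},(\omega)+\overline{C})$, which is the claim. I expect the main subtlety to be purely bookkeeping about \emph{which} variable cuts out $\overline{C}$ and keeping the decomposition $\omega = h\,du\wedge\frac{dv}{v}$ consistent with the conventions of Definition~\ref{dinv1res}; there is also the minor point of justifying $val_P(d\bar u)=0$, i.e.\ that restricting to a formal/Zariski neighborhood where $\bar u$ is a uniformizer makes $d\bar u$ a generator of $\Omega^1_{\overline{C}/k}$ locally at $P$ — this follows because $\overline{C}$ is geometrically reduced so $k(\overline{C})/k$ is separably generated and $\bar u$, being a uniformizer at the smooth point $P$, is a separating element (as used in Lemma~\ref{kuv2} via \cite{bou} thm V.16.7.5), whence $d\bar u$ generates $\Omega^1$ at $P$. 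The genuinely nontrivial input is the identity $m_P(\overline{C},\operatorname{div}(h)) = val_P(h|_{\overline{C}})$ for $h$ regular in a neighborhood of $\overline{C}$; everything else is combining earlier definitions. If one wishes to avoid even this, one can phrase the whole argument in the complete local ring $\widehat{\mathcal{O}}_{\overline{S},P}\cong k[[u,v]]$ and compute both sides as explicit orders in the Laurent expansion of $h$ in $k((u))[[v]]$ furnished by Lemma~\ref{kuv}.
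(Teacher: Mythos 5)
Your proof follows essentially the same route as the paper's: write $\omega = h\,du\wedge \frac{dv}{v}$ for a strong $(P,\overline{C})$-pair, observe that $\res^1_{\overline{C}}(\omega)=\bar h\,d\bar u$ with $val_P(d\bar u)=0$, and reduce everything to the identity $m_P(\overline{C},\operatorname{div}(h))=val_P(\bar h)$. The one step that is not correct as written is the chain $m_P(\overline{C},\operatorname{div}(h))=\dim_k\mathcal{O}_{\overline{S},P}/(v,h)=\operatorname{length}\mathcal{O}_{\overline{C},P}/(\bar h)=val_P(\bar h)$: this presupposes $h\in\mathcal{O}_{\overline{S},P}$, but in general $h$ only lies in $\mathcal{O}_{\overline{S},\overline{C}}$. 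The hypothesis is merely that $\omega$ has a \emph{simple pole along $\overline{C}$}; it may have further poles through $P$ along other curves, in which case $h$ is not regular at $P$, $\bar h$ has a pole at $P$, and both sides of your identity are negative, so the quotient rings you write down are not the right objects. This case is not vacuous --- it is precisely the situation needed later (Corollary \ref{linear} assumes only $m_P(\overline{C},(\omega)+\overline{C})\geq -1$, and condition $(c)$ of Proposition \ref{crit} uses $m_P\leq 0$). The repair is exactly what the paper does: factor $h$ as a unit of $\mathcal{O}_{\overline{S},P}$ times $\varphi/\psi$, where $\varphi$ and $\psi$ are local equations of $((\omega)+\overline{C})^{+}$ and $((\omega)+\overline{C})^{-}$, and run your length computation on $\varphi$ and $\psi$ separately, so that both sides of the lemma come out as the difference $val_P(\bar\varphi)-val_P(\bar\psi)$. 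With that adjustment your argument coincides with the paper's proof.
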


\begin{proof}
Let $\varphi$, $\psi$ and $v$ be respective local equations of $\left((\omega)+\overline{C}\right)^+,\ \left((\omega)+\overline{C}\right)^-$ and $\overline{C}$ in a neighborhood of $P$. Let $u\in \overline{\F}_q(\overline{S})$ such that $(u,v)$ is a strong $(P,\overline{C})$-pair, then for some $h\in \mathcal{O}_{\overline{S},P}^{\times}$, we have
$$
\omega=h\frac{\varphi}{\psi} du\w \frac{dv}{v}.
$$
Thus, $\res^1_{\overline{C}}(\omega)=\bar{h}\bar{\varphi} \bar{\psi}^{-1}d\bar{u}$ and since $\bar{h}\in \mathcal{O}_{\overline{C},P}^{\times}$, we have $val_P(\bar{h}d\bar{u})=0$. Consequently,
$$
val_P(\res^1_{\overline{C}}(\omega))\ = \ val_P(\bar{\varphi})-val_P(\bar{\psi}).
$$ 
Furthermore, 
$$
\begin{array}{rcl}
m_P(\overline{C},(\omega)+\overline{C}) & = & m_P(\overline{C},((\omega)+\overline{C})^+)-m_P(\overline{C},((\omega)+\overline{C})^-) \\
 & = & \dim_{\overline{\F}_q}\mathcal{O}_{\overline{S},P} / (\varphi,v) - \dim_{\overline{\F}_q}\mathcal{O}_{\overline{S},P} / (\psi,v) \\
 & = & \dim_{\overline{\F}_q} \mathcal{O}_{\overline{C},P}/ (\bar{\varphi}) - \dim_{\overline{\F}_q} \mathcal{O}_{\overline{C},P}/ (\bar{\psi}) \\
 & = & val_P(\bar{\varphi}) - val_P(\bar{\psi}).   
\end{array}
$$
\end{proof}

\begin{cor}\label{linear}
Let $\overline{C}$ be an irreducible curve embedded in $\overline{S}$ and $P$ be a smooth point of $\overline{C}$.
Let $\omega \in \Omega^2_{\overline{\F}_q(\overline{S})/\overline{\F}_q}$ such that $val_{\overline{C}}(\omega)
\geq -1$ and $m_P(\overline{C}, (\omega)+\overline{C})\geq -1$. Then,
$$
\forall f\in \mathcal{O}_{\overline{S},P},\ \res^2_{\overline{C},P}(f\omega)=f(P)\res^2_{\overline{C},P}(\omega).
$$ 
\end{cor}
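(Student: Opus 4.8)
The plan is to reduce everything to a single local Laurent expansion and to read the $\mathcal{O}_{\overline{S},P}$-linearity off the coefficient of bidegree $(-1,-1)$. First I would dispose of the trivial case: since $P\in\overline{C}$, the ring $\mathcal{O}_{\overline{S},P}$ sits inside $\mathcal{O}_{\overline{S},\overline{C}}$, so $val_{\overline{C}}(f)\geq 0$ and hence $val_{\overline{C}}(f\omega)\geq val_{\overline{C}}(\omega)\geq -1$; if moreover $val_{\overline{C}}(\omega)\geq 0$ then $\omega$ and $f\omega$ both have trivial one-codimensional residue along $\overline{C}$, so both $2$-residues are zero and nothing is to prove. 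Thus I may assume $val_{\overline{C}}(\omega)=-1$. Then I would fix a strong $(P,\overline{C})$-pair $(u,v)$ — one exists because $\overline{C}$ is smooth at $P$ on the smooth surface $\overline{S}$, so a local equation $v$ of $\overline{C}$ extends to a regular system of parameters $(u,v)$ at $P$, and automatically $\bar u$ is a uniformizer of $\mathcal{O}_{\overline{C},P}$ — and write $\omega=h\,du\w dv$ with Laurent expansion $h=\sum_{j\geq -1}h_j(u)v^j$, $h_j\in\mathcal{K}_u$, $h_{-1}\neq 0$ (using definition \ref{bordel} and lemmas \ref{kuv}, \ref{Ku[[v]]} together with proposition \ref{sameexp}). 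By theorem \ref{inv2res} this gives $\res^2_{\overline{C},P}(\omega)=h_{-1,-1}$, the $u^{-1}$-coefficient of $h_{-1}$.

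The crucial step, and the one I expect to be the only subtle point, is to extract from the hypothesis $m_P(\overline{C},(\omega)+\overline{C})\geq -1$ that $h_{-1}$ has at worst a simple pole in $u$, i.e. $h_{-1}(u)=h_{-1,-1}u^{-1}+\rho(u)$ with $\rho\in\overline{\F}_q[[u]]$. For this I would invoke lemma \ref{valres}: it identifies $val_P(\res^1_{\overline{C}}(\omega))=val_P(h_{-1}(\bar u)\,d\bar u)$ with $m_P(\overline{C},(\omega)+\overline{C})$, and since $\bar u$ is a uniformizer at $P$ one has $val_P(d\bar u)=0$, so this forces $val_P(h_{-1}(\bar u))\geq -1$, which is exactly the stated shape of $h_{-1}$.

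To finish, I would expand $f\in\mathcal{O}_{\overline{S},P}$ as a Taylor series $f=\sum_{i\geq 0}\tilde f_i(u)v^i$ in $\overline{\F}_q[[u,v]]$, note that $\tilde f_0(u)$ is the $u$-adic expansion of $\bar f$ and that $\tilde f_0(0)=f(P)$, and compute the coefficient of $v^{-1}$ in $fh$: since $i+j=-1$ with $i\geq 0$, $j\geq -1$ forces $i=0$, $j=-1$, this coefficient is $\tilde f_0(u)h_{-1}(u)$. Because $val_{\overline{C}}(f\omega)\geq -1$, definition \ref{bordel} gives $\res^2_{\overline{C},P}(f\omega)$ as the $u^{-1}$-coefficient of $\tilde f_0(u)h_{-1}(u)$, and using $h_{-1}(u)=h_{-1,-1}u^{-1}+\rho(u)$ and $\tilde f_0(u)=f(P)+u\,\sigma(u)$ with $\sigma\in\overline{\F}_q[[u]]$ the only surviving term is $f(P)h_{-1,-1}$, whence $\res^2_{\overline{C},P}(f\omega)=f(P)\res^2_{\overline{C},P}(\omega)$. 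The hypothesis $m_P(\overline{C},(\omega)+\overline{C})\geq -1$ is genuinely needed at this last step: if $h_{-1}$ carried a pole of order $\geq 2$ in $u$, the product would acquire extra $u^{-1}$-terms of the form $\tilde f_{i,0}\,(h_{-1})_{-1-i}$ with $i\geq 1$ and the linearity would break down.
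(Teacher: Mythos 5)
Your proposal is correct and follows essentially the same route as the paper: both reduce to the one-codimensional residue $\res^1_{\overline{C}}(\omega)$, invoke Lemma \ref{valres} to deduce from $m_P(\overline{C},(\omega)+\overline{C})\geq -1$ that this $1$-form has at worst a simple pole at $P$, and then conclude by the multiplicativity of the residue at a simple pole under multiplication by a function regular at $P$. The only cosmetic difference is that you re-derive that last one-dimensional fact explicitly in Laurent coefficients, whereas the paper writes $\res^1_{\overline{C}}(f\omega)=\bar f\,\res^1_{\overline{C}}(\omega)$ and quotes it directly.
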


\begin{proof}
Let $(u,v)$ be a strong $(P,\overline{C})$-pair and $f$ be an element of $\mathcal{O}_{\overline{S},P}$. Since $val_{\overline{C}}(\omega)\geq -1$, there exists $\psi \in \mathcal{O}_{\overline{S},\overline{C}}$ such that
$$
\omega=\psi du\w \frac{dv}{v}.
$$

\noindent Set $\mu:=\res^1_{\overline{C}}(\omega)=\bar{\psi}d\bar{u}$. The condition $val_{\overline{C}}(\omega)\geq -1$ entails also
$$
\res^1_{\overline{C}}(f\omega)=\bar{f}\bar{\psi}d\bar{u}=\bar{f}\mu.
$$

\noindent From lemma \ref{valres}, we have $val_P(\mu)=m_P(\overline{C},(\omega)+\overline{C})\geq -1$.
Thus,
$$
\res^2_{\overline{C},P}(f\omega)=\res_P(\bar{f}\mu)=\bar{f}(P)\res_P(\mu)
=f(P)\res^2_{\overline{C},P}(\omega).
$$  
\end{proof}

\begin{proof}[Proof of proposition \ref{crit}]
Let $(D_a,D_b)$ be a pair of divisors satisfying conditions of proposition \ref{crit}. Condition $(i)$ of definition \ref{Deltac} is obviously satisfied, because supports of $D_a$ and $D_b$ are assumed to have no common irreducible component.
Now, we prove that $(ii)$ and $(iii)$ are satisfied.
First, recall that $\mathcal{F}$ denotes the sheaf of rational $2$-forms $\omega$ on $\overline{S}$ satisfying locally $(\omega)\geq -D=-D_a-D_b$.

\medbreak

\noindent \textbf{Condition $(ii)$.} Let $P\in \supp (\Delta)$ and $\omega \in \mathcal{F}_P$, where $\mathcal{F}_P$ denotes the stalk of the sheaf $\mathcal{F}$ at $P$. From $(1)$, there is an irreducible curve $C$, smooth at $P$ such that either $D_a^+$ or $D_b^+$ equals $C$ in a neighborhood of $P$.
Using remark \ref{Da-Db}, we may assume that $D_a^+=C$ without loss of generality. Thus, $C$ is the only one irreducible component of $\supp (D_a^+)$ in a neighborhood of $P$.
Therefore, $\res^2_{D_a,P}(\omega)=\res^2_{C,P}(\omega)$.
Consequently, $val_C(\omega)\geq -1$ and corollary \ref{linear} asserts that $\res^2_{C,P}$ (hence $\res^2_{D_a,P}$) is $\OP$-linear.

\medbreak

\noindent \textbf{Condition $(iii)$.} Let $P\in \overline{S}$ be a point out of the support of $\Delta$. From remark \ref{Da-Db}, we may assume without loss of generality that condition $(2)$ in proposition \ref{crit} is satisfied by $D_a$ (i.e. $D_*=D_a$ at $P$).
Let $\overline{C}$ be an $\overline{\F}_q$-irreducible
component of $\supp(D_a^+)$ and $\omega \in \mathcal{F}_P$. From (\ref{b}), $val_{\overline{C}}(\omega)\geq -1$ and from
lemma \ref{valres} and (\ref{c}), we have $val_P(\res^1_{\overline{C}}(\omega)) \geq 0$. Consequently, $\res^2_{\overline{C},P}(\omega)=0$, which concludes the proof. 
\end{proof}


\begin{exmp}\label{p2}
Let $S=\P^2$ and let $\Delta$ be the sum of the rational points of an affine chart $U$. Let $x,y$ be affine coordinates on $U$. For all $\alpha, \beta \in \F_q$, set $D_{a,\alpha}$ the line $\{ x=\alpha \}$ and $D_{b,\beta}:=\{y=\beta\}$. Now, set $D_a:=\sum_{\alpha \in \F_q}D_{a,\alpha}$ and $D_b:=\sum_{\beta} D_{b,\beta}$. The pair $(D_a,D_b)$ satisfies the criterion of proposition \ref{crit}, hence is $\Delta$-convenient.
Notice that the components of $D_a$ (resp.$D_b$) intersect themselves at a point lying on the line \textit{at infinity}, which does not represent any contradiction with the definition of $\Delta$-convenience.   
\end{exmp}

Notice that in the definition, neither $D_a$ nor $D_p$ are assumed to be effective. In some situation it is necessary to use noneffective divisors. This happens in next example.

\begin{exmp}
Consider again $S=\P^2$ and assume that the base field is $\F_q$ with $q$ odd. Set $\Delta=P_1+P_2+P_3$ with $P_1=(0:0:1)$, $P_2=(1:0:1)$ and $P_3=(0:1:1)$.  
The pair $(D_a,D_b)$ defined by
$D_a=\{Y=0\}+\{Y=1\}$ and $D_b=\{X=0\}+\{X=1\}-\{X+Y-2=0\}$, is $\Delta$-convenient. 
However, in this situation, there does not exist any $\Delta$-convenient pair of effective divisors. The proof of last claim is left to the reader.
\end{exmp}



\section{Properties of differential codes}\label{propcodes}
\subsection{Orthogonality}\label{suborth}

\begin{thm}\label{inc1}
Let $(D_a,D_b)$ be a $\Delta$-convenient pair and set $D:=D_a+D_b$, then
$$
C_{\Omega}(\Delta,D_a,D_b,G)\subseteq C_L(\Delta,G)^{\bot}.
$$  
\end{thm}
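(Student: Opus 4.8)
The statement is an orthogonality inclusion, and as in the classical curve case (relation \textbf{(OR)}, inclusion ``$\subseteq$''), the natural tool is a global summation formula for residues — here the third residue formula, Theorem \ref{RF3}. The plan is to show that for every $f \in L(G)$ and every $\omega \in \Omega^2(G-D)$, the standard inner product of the codewords $\mathrm{ev}_\Delta(f)$ and $\res^2_{D_a,\Delta}(\omega)$ vanishes, i.e.
$$
\sum_{i=1}^n f(P_i)\,\res^2_{D_a,P_i}(\omega) = 0.
$$
First I would observe that $f\omega \in \Omega^2(G-D)$ as well — more precisely $(f\omega) \geq (f) + (\omega) \geq -G + (G-D) = -D$, so $f\omega \in \Omega^2(-D)$, which is exactly the space of forms to which Theorem \ref{RF3} applies. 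The idea is then to identify the $i$-th term $f(P_i)\res^2_{D_a,P_i}(\omega)$ with $\res^2_{D_a,P_i}(f\omega)$, and to show that the remaining terms $\res^2_{D_a,P}(f\omega)$ over $P \in S$ outside $\{P_1,\dots,P_n\}$ all vanish, so that Theorem \ref{RF3} (which gives $\sum_{P\in S}\res^2_{D_a,P}(f\omega)=0$) closes the argument.

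The key local input is the $\OP$-linearity of $\res^2_{D_a,P}$ coming from $\Delta$-convenience. For $P = P_i \in \supp(\Delta)$, condition $(ii)$ of Definition \ref{Deltac} says precisely that $\res^2_{D_a,P_i} : \mathcal{F}_{P_i} \to \overline{\F}_q$ is $\mathcal{O}_{\overline{S},P_i}$-linear, where the module structure on the target is via evaluation $f \mapsto f(P_i)$. Since the $P_i$ avoid $\supp(G)$, the function $f \in L(G)$ is regular at $P_i$, and $\omega \in \mathcal{F}_{P_i}$ (its stalk membership follows from $(\omega) \geq -(G-D)^+ \geq -D$ locally near $P_i$, because $G$ has no pole there); hence $\res^2_{D_a,P_i}(f\omega) = f(P_i)\res^2_{D_a,P_i}(\omega)$, giving the $i$-th term. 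For $P \notin \supp(\Delta)$, condition $(iii)$ says $\res^2_{D_a,P}$ is the zero map on $\mathcal{F}_P$, and again $f\omega \in \mathcal{F}_P$ there (away from $\supp(G)$ this is immediate, while at points of $\supp(G)$ one uses that $f\omega \in \Omega^2(-D)$ globally, hence $(f\omega)_{|U} \geq -D_{|U}$), so $\res^2_{D_a,P}(f\omega) = 0$. Summing over all $P \in S$ and invoking Theorem \ref{RF3} gives $\sum_{i=1}^n f(P_i)\res^2_{D_a,P_i}(\omega) = \sum_{P \in S}\res^2_{D_a,P}(f\omega) = 0$, which is the desired orthogonality.

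The main obstacle — really the only subtle point — is bookkeeping with the stalks: one must check that $f\omega$ genuinely lands in $\mathcal{F}_P$ for \emph{every} $P \in S$ (equivalently, that $f\omega \in \Omega^2(-D)$), so that conditions $(ii)$ and $(iii)$ of $\Delta$-convenience are actually applicable at each point and Theorem \ref{RF3} can be invoked. This reduces to the divisor inequality $(f) + (\omega) \geq -D$, which follows from $f \in L(G)$, $\omega \in \Omega^2(G-D)$ and additivity of divisors; the only place one must be slightly careful is near $\supp(G)$, where one should not argue pointwise with the evaluation map (it is not defined there) but rather use the global membership $f\omega \in \Omega^2(-D) = H^0(\overline{S},\mathcal{F})$ together with $(iii)$ to conclude the residue vanishes. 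Once this is in place, the rest is a direct application of the machinery developed in Part \ref{partieres}.
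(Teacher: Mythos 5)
Your proposal is correct and follows essentially the same route as the paper: write the pairing as $\sum_P \res^2_{D_a,P}(f\omega)$ using $f\omega\in\Omega^2(-D)$ together with conditions $(ii)$ and $(iii)$ of $\Delta$-convenience, then conclude by Theorem \ref{RF3}. Your extra care about stalk membership of $f\omega$ in $\mathcal{F}_P$ near $\supp(G)$ is a detail the paper leaves implicit, but it does not change the argument.
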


\begin{proof}
Let $\omega \in \Omega^2(G-D)$ and $f\in L(G)$, then $f\omega \in \Omega^2(-D)$ and from the definition of $\Delta$-convenient pairs,
$$
\forall P\in \overline{S},\ 
\res^2_{D_a,P}(f\omega)=\left\{
\begin{array}{ccl}
0 & \textrm{if} & P\notin \supp(\Delta) \\
f(P)\res^2_{D_a,P}(\omega) & \textrm{if} & P\in \supp (\Delta).   
\end{array}
\right.
$$

\noindent Thus,
$$<\textrm{ev}_{\Delta}(f),\res^2_{D_a,\Delta}(\omega)>= \sum_{P\in \supp (\Delta)} f(P)\res^2_{D_a,P}(\omega)=\sum_{P\in \supp (\Delta)}\res^2_{D_a,P}(f\omega).$$
And last sum is zero from theorem \ref{RF3}.
\end{proof}

In section \ref{examples}, we prove that in some situation, the reverse inclusion is false for any choice of a $\Delta$-convenient pair of divisors. Thus, in general, we do not have equality.
An interpretation of this statement is that, even if a pair of $\Delta$-convenient divisors is linked to $\Delta$, it is not involved in the functional construction.
This lack of canonicity in the choice of $D$ might be the reason of this non-equality.
In a forthcoming paper, we will study how to get the whole orthogonal of a functional code, using differentials.

\subsection{A differential code is functional}
Recall that in section \ref{codesoncurves}, we discussed about two relations denoted by \textbf{(OR)} and \textbf{(L$\mathbf{\Omega}$)}.
We just said that it is not possible to extend perfectly the orthogonality relation \textbf{(OR)}. Nevertheless, next proposition asserts that relation \textbf{(L$\mathbf{\Omega}$)} holds on surfaces, a differential code is always a functional one associated with some canonical divisor. Recall that, the proof of \textbf{(L$\mathbf{\Omega}$)} for curves is a consequence of the weak approximation theorem. Here is the needed statement for surfaces.

\begin{prop}\label{approx}
Let $P_1,\ldots, P_m$ and $Q_1, \ldots, Q_n$ be two families of closed points of $S$ and $C$ be an irreducible curve embedded in $S$. Suppose that the $P_i$'s are contained in $C$ and the $Q_i$'s are out of it. Then, there exists a function $u\in \F_q(S)$ satisfying the following conditions.
\begin{enumerate}
\item[$(i)$] $\forall i \in \{1,\ldots ,m\}$, $u$ is a local equation of $C$ in a neighborhood of $P_i$.
\item[$(ii)$] $\forall j \in \{1,\ldots ,n\},\ Q_j \notin\supp (u)$, i.e. $u\in \mathcal{O}_{S,Q_i}^{\times}$
\end{enumerate}
\end{prop}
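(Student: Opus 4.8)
The plan is to reduce the statement to a global section computation on a suitable curve and then produce the function $u$ by solving a divisor-class problem, exactly as one proves the weak approximation theorem in the one-dimensional setting, but now in the guise of a Riemann--Roch argument on $C$. First I would dispose of trivialities: if the $Q_j$'s are finitely many and disjoint from $C$, I can choose an auxiliary very ample divisor $H$ on $S$ whose support meets none of the points $P_i,Q_j$ and does not contain $C$; the class of $C$ in $\mathrm{Pic}(S)$, twisted by a large multiple of $H$, becomes very ample, so there is a global section of the corresponding line bundle whose zero divisor is exactly $C + H'$ for some effective $H'$ not passing through any $P_i$ or $Q_j$ (here one uses that $S$ is projective and that a generic member of a very ample linear system avoids any prescribed finite set of points and is reduced along $C$). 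Replacing this section by a ratio of two such sections produces a rational function $w$ with $(w) = C + H_1 - H_2$ where $H_1,H_2$ are effective, supported away from all the $P_i$ and $Q_j$, and not containing $C$.

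The function $w$ already satisfies $(ii)$ after possibly enlarging the finite set to absorb $\mathrm{Supp}(H_1)\cup\mathrm{Supp}(H_2)$ — so the real point is $(i)$: I need $w$ to have valuation exactly one along $C$ in a neighborhood of each $P_i$, i.e. $\bar w$ should be a local equation of $C$ there. Since $w$ vanishes to order one along $C$ (the coefficient of $C$ in $(w)$ is $1$), $\mathrm{val}_C(w)=1$, hence $w$ is automatically a uniformizing parameter of $\mathcal{O}_{S,C}$; the only obstruction is that $w$ may vanish or have a pole at some $P_i$ \emph{along another curve through $P_i$}, that is, one of the other components of $H_1$ or $H_2$ might pass through $P_i$. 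But this is precisely what the freedom in choosing the linear system lets me avoid: the $P_i$ form a finite set, and for a very ample system of large enough degree I can demand that the residual divisor $H'$ (equivalently $H_1, H_2$) avoids all of them. Concretely, I would invoke a moving-lemma / Bertini-type argument: the sections of $\mathcal{O}_S(C + mH)$ vanishing at a fixed finite set impose independent conditions once $m\gg 0$, so a generic section through none of the $P_i,Q_j$ exists and is reduced, and its zero scheme is $C$ union a divisor disjoint from $\{P_i,Q_j\}$.

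The step I expect to be the main obstacle is making the Bertini/moving-lemma input rigorous over the possibly non-perfect finite field $\F_q$ and, more delicately, ensuring the zero divisor of the chosen section is \emph{reduced along $C$} (coefficient exactly one, not higher) while still avoiding the finite point set — over a small finite field a ``generic'' member of the linear system may not be $\F_q$-rational. The standard fix is to pass to a large enough twist $C+mH$ so that the relevant linear system is base-point free and separates the finite set of points, and to use that one can prescribe the multiplicity-one behaviour along $C$ by working in $\mathcal{O}_{S,C}$, which is a DVR: choose the section to lie in $\mathfrak{m}_{S,C}\setminus\mathfrak{m}_{S,C}^2$, an open dense (hence nonempty, and defined-over-$\F_q$ after a further twist) condition. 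Once such a section, and hence the function $u := w$, is found, conditions $(i)$ and $(ii)$ hold by construction, and the proof is complete.
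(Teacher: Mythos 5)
Your proposal is correct in substance and rests on the same key idea as the paper's proof: produce a rational function $u$ with $(u)=C+D'$ where $D'$ is a (not necessarily effective) divisor whose support avoids all the $P_i$ and $Q_j$; then $val_C(u)=1$ and no other component of $(u)$ passes through the $P_i$, so $u$ is a local equation of $C$ there, and $u$ is a unit at each $Q_j$. The difference is purely in how that divisor is produced. The paper starts from an arbitrary uniformizing parameter $u_0$ of $\mathcal{O}_{S,C}$, writes $(u_0)=C+D$, and invokes the moving lemma (Shafarevich, thm III.1.3.1) to replace $D$ by a linearly equivalent $D'$ missing the finite point set, setting $u:=fu_0$; you instead build the function directly as a ratio of sections of a large twist $\mathcal{O}_S(C+mH)$, which amounts to re-proving the moving lemma in this special case. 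Your route is more self-contained; the paper's is shorter and delegates the Bertini-type issues (including rationality over $\F_q$) to the cited reference. One step of your write-up is imprecise and should be repaired: a \emph{generic} member of the very ample system $|C+mH|$ does \emph{not} contain $C$ (base-point-freeness is exactly what prevents it), so you cannot get a divisor of the form $C+H'$ as a generic member of that system. What you want is to fix a section cutting out $C$, multiply it by a section of $|mH|$ whose zero divisor $H_1$ avoids the finite set, and divide by a section of $|C+mH|$ whose zero divisor $H_2$ avoids the finite set; then $(u)=C+H_1-H_2$ as desired. (Also, $\F_q$ is perfect, so the non-perfectness worry is vacuous; the genuine small-field issue is only the existence of $\F_q$-rational members avoiding the points, which large $m$ handles as you say.)
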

 
\begin{proof}
Choose $u_0$, a uniformizing parameter of $\OC$. Then, $(u_0)=C+D$ where $D\in \textrm{Div}_{\F_q}(S)$ whose support does not contain $C$. From the \textit{moving lemma} (\cite{sch1} thm III.1.3.1), there exists a divisor $D'$ linearly equivalent to $D$ whose support avoids $P_1,\ldots , P_m, Q_1, \ldots , Q_n$. Thus, for some function $f \in \F_q (S)$, we have $D'=D+(f)$ and $u:=fu_0$ is a solution of the problem.  
\end{proof}

\noindent \textbf{N.B.} In the whole book of Shafarevich \cite{sch1} the base field is assumed to be algebraically closed. Nevertheless the very same proof holds over an arbitrary field.

\begin{cor}\label{omega0}
Let $(D_a,D_b)$ be a $\Delta$-convenient pair and set $D:=D_a+D_b$, then there exists a differential $\omega_0\in \Omega^2_{k(S)/k}$ satisfying the following conditions.
\begin{enumerate}
\item\label{vvv} For some open set $U$ containing $\supp (\Delta)$, we have
$({\omega_0}_{|U})=-D_{|U}$.
\item\label{vvx} $\forall P\in \supp (\Delta),\ \res^2_{D_a,P}(\omega_0)=1$.
\item\label{vxx} $\forall P\in \supp (\Delta),\ \forall f\in \OP,\
\res^2_{D_a,P}(f\omega_0)=f(P)\res^2_{D_a,P}(\omega_0)$.
\end{enumerate}
\end{cor}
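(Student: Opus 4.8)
The goal is to produce a single rational $2$-form $\omega_0$ with prescribed local behavior along the components of $D$ near $\supp(\Delta)$ and with prescribed nonzero $2$-residues at the points of $\Delta$. The natural strategy is to build $\omega_0$ component by component near each $P_i$ using the criterion hypotheses, and then patch these local data into a global rational form. By Proposition \ref{crit} (via the criterion) at each $P\in\supp(\Delta)$ there is an irreducible curve $C_P$, smooth at $P$, with $(D_a^+)_{|U_P}=C_P$ (after possibly swapping $D_a,D_b$, using Remark \ref{Da-Db}) and $m_P(C_P,D-C_P)=1$. First I would apply Proposition \ref{approx} to obtain a function $u$ which is simultaneously a local equation of the relevant curve near each $P_i$ and a unit at all other distinguished points; this is exactly the surface analogue of weak approximation that the paper isolated for this purpose.

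\textbf{Construction of $\omega_0$.} Having chosen $u$ as a common local equation of $D_a^+$ near $\supp(\Delta)$, I would pick a second function $v$ (a rational function on $S$) so that $(u,v)$ is a strong $(P_i,C_P)$-pair at each $P_i$ — again this is arranged by the moving lemma / approximation so that $\bar v$ restricts to a local parameter of $C_P$ at $P_i$ — and then set, roughly,
\begin{equation*}
\omega_0 := \frac{1}{\psi}\,\frac{du}{u}\wedge dv,
\end{equation*}
where $\psi$ is a rational function on $S$ chosen to cut out the divisor $D_b^+ + (D_a - D_a^+) + \cdots$ that still needs to appear in $(\omega_0)$, normalized so that $\psi$ is a unit at every $P_i$ and $\psi(P_i)=1$. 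Concretely, one wants $(\omega_0) = -D$ on a neighborhood $U$ of $\supp(\Delta)$: the factor $du/u$ contributes a simple pole along $C_P=D_a^+$, the numerator $1/\psi$ contributes the remaining poles of $D$, and $dv$ contributes a zero divisor that must be arranged (again by approximation/moving) to miss $\supp(\Delta)$ and to cancel any spurious zeros of $u$ and $\psi$. This gives condition (\ref{vvv}).

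\textbf{Checking the residue conditions.} For (\ref{vvx}), near each $P_i$ we have $\omega_0 = h\, du\wedge \frac{dv}{v^{\,0}}\cdot(\text{simple pole in }u)$ more precisely $\omega_0 = g\,\frac{du}{u}\wedge dv$ with $g=1/\psi \in \mathcal{O}_{S,P_i}^\times$ and $g(P_i)=1$; then $\res^1_{C_P}(\omega_0) = -\bar g\, d\bar v$ and, since $m_{P_i}(C_P, (\omega_0)+C_P)=1$ forces $\bar v$ to be (up to a unit) a local parameter of $C_P$ at $P_i$, one reads off $\res^2_{C_P,P_i}(\omega_0)=\res_{P_i}(-\bar g\,d\bar v) = \pm 1$; absorbing the sign into the choice of $v$ gives value exactly $1$, and since $D_a^+=C_P$ is the unique component of $D_a$ near $P_i$ we get $\res^2_{D_a,P_i}(\omega_0)=1$. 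Finally (\ref{vxx}) is immediate: since $(\omega_0)+C_P$ is effective at $P_i$ up to the one simple pole, $val_{C_P}(\omega_0)\ge -1$ and $m_{P_i}(C_P,(\omega_0)+C_P)=1\ge -1$, so Corollary \ref{linear} gives $\mathcal{O}_{S,P_i}$-linearity of $\res^2_{C_P,P_i}=\res^2_{D_a,P_i}$ directly.

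\textbf{Main obstacle.} The routine parts are the local residue computations (handled by Lemma \ref{valres} and Corollary \ref{linear}) and the sign bookkeeping. The genuine difficulty is the \emph{global} construction: simultaneously arranging, by Proposition \ref{approx} together with the moving lemma, that $u$ is a common local equation along possibly \emph{different} curves $C_{P_i}$ at the various $P_i$, that the auxiliary $v$ and $\psi$ behave correctly (units, correct values, zero/pole divisors avoiding $\supp(\Delta)$), and that after all these choices the single global form $\omega_0$ has divisor exactly $-D$ on a neighborhood $U$ of $\supp(\Delta)$ without introducing extra poles or zeros there. Keeping all these approximation conditions compatible — especially when the $C_{P_i}$ are distinct components of $D_a$ — is where the care is needed; the hypotheses of the criterion (smoothness at $P$, coefficient one, the intersection-multiplicity conditions) are precisely what make this patching possible.
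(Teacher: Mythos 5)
There is a genuine gap, and it begins at your very first step: you extract the curves $C_P$ with $(D_a^+)_{|U_P}=C_P$ and $m_P(C_P,D-C_P)=1$ ``by Proposition \ref{crit}'', but that proposition is only a \emph{sufficient} criterion for $\Delta$-convenience, not a characterization of it. The corollary assumes nothing beyond Definition \ref{Deltac}, so the geometric structure on which your whole construction and all your residue computations rest is simply not available. The paper's proof needs none of it: it uses Proposition \ref{approx} to get local equations $u_i$ (resp.\ $v_j$) of the components $X_i$ of $D_a$ (resp.\ $Y_j$ of $D_b$) on a common neighborhood $U$ of $\supp(\Delta)$, sets $\omega_0:=\mu/(uv)$ with $u=\prod u_i^{m_i}$, $v=\prod v_j^{n_j}$ and $\mu$ a $2$-form with neither zeros nor poles near $\supp(\Delta)$, so that $({\omega_0}_{|U})=-D_{|U}$. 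From that point everything is abstract: the stalk $\mathcal{F}_P$ equals $\OP\cdot\omega_0$, so condition (3) is exactly condition $(ii)$ of Definition \ref{Deltac}, and condition $(iii)$ of that definition (surjectivity at $P\in\supp(\Delta)$) together with linearity forces $\res^2_{D_a,P}(\omega_0)=a_P\neq 0$; one then replaces $\omega_0$ by $g\omega_0$ for a function $g$ with $g(P)=a_P^{-1}$ to obtain (2). You never invoke the two clauses of Definition \ref{Deltac} that do all the work here.

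Even if one grants the criterion's hypotheses, your verification of (2) fails. You assert $g=1/\psi\in\mathcal{O}_{S,P_i}^{\times}$ with $g(P_i)=1$ and compute $\res^2_{C_P,P_i}(\omega_0)=\res_{P_i}(-\bar g\,d\bar v)=\pm 1$. But if $1/\psi$ were a unit at $P_i$, then $C_P$ would be the only pole of $\omega_0$ near $P_i$ and Lemma \ref{oukisont} forces $\res^2_{C_P,P_i}(\omega_0)=0$; concretely, $-\bar g\,d\bar v$ with $\bar g$ a unit and $\bar v$ a local parameter is regular at $P_i$ and has residue $0$, not $\pm 1$. In reality $D_b^+$ must pass through $P_i$, so $\psi(P_i)=0$; the restriction of $1/\psi$ to $C_P$ has a simple pole at $P_i$ (this is what $m_{P_i}(C_P,D-C_P)=1$ gives via Lemma \ref{valres}, since $m_{P_i}(C_P,(\omega_0)+C_P)=-1$), and the resulting $2$-residue is some nonzero scalar $a_P$ that is in general neither $1$ nor $-1$. ``Absorbing the sign into the choice of $v$'' cannot repair this: the indispensable missing step is the final normalization of $\omega_0$ by a function taking the value $a_P^{-1}$ at each point of $\supp(\Delta)$.
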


\begin{proof}
Let $X_1,\ldots, X_r$ and $Y_1,\ldots ,Y_s$
be respectively the irreducible components of
$\supp (D_a)$ and $\supp (D_b)$. 
That is $D_a=m_1 X_1+\cdots +m_r X_r$ and $D_b=n_1Y_1+\cdots +n_s Y_r$ for some integers $m_i$'s and $n_j$'s.  
From proposition \ref{approx} there is an open subset $U$ of $S$ containing the support of $\Delta$ and functions $u_1,\ldots, u_r,v_1,\ldots,v_s$ such that $u_i$ (resp $v_j$) is an equation of $X_i$ (resp. $Y_j$) in $U$.
Set $u:=\prod_i u_i^{m_i}$ and $v:= \prod_i v_i^{n_i}$.

\noindent Let $\mu$ be a rational $2$-form on $S$ having neither zeros nor poles in a neighborhood of the support of $\Delta$ and set
$$
\omega_0:= \frac{\mu}{uv}.
$$

\noindent Replacing $U$ by a smaller open set containing $\supp (\Delta)$, we may assume that
$\mu$ has neither zeros nor poles in $U$. Thus, statements (\ref{vvv}) and (\ref{vxx}) are satisfied by $\omega_0$.
Moreover, from the definition \ref{Deltac} of $\Delta$-convenience, we have
$$
\forall P\in \supp (\Delta),\ \res^2_{D_a,P}(\omega_0)=a_P\neq 0.
$$
Choose $g\in \cap_{P\in \supp (\Delta)}\OP^{\times}$ such that $g(P)=a_P^{-1}$ for all $P\in \supp (\Delta)$. Then, replacing $U$ by a smaller open set containing $\supp (\Delta)$ and $\omega_0$ by $g\omega_0$, the three conditions are satisfied.
\end{proof}

\begin{thm}\label{dif=fonc}
Let $D=D_a+D_b$ such that $(D_a,D_b)$ is $\Delta$-convenient. There exists a canonical divisor $K$ such that
$$
C_{\Omega}(\Delta,D_a,D_b, G)=C_L(\Delta, K-G+D).
$$  
\end{thm}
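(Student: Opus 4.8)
The plan is to mimic the classical proof of the relation \textbf{(L$\mathbf{\Omega}$)} for curves: produce a single rational $2$-form $\omega_0$ that is a ``local generator'' for the pole divisor $-D$ near every evaluation point and along which the residue maps are normalized, then use it to identify the differential code with a functional one. Corollary \ref{omega0} has already furnished exactly such an $\omega_0$, satisfying $({\omega_0}_{|U}) = -D_{|U}$ on some open $U \supseteq \supp(\Delta)$, with $\res^2_{D_a,P}(\omega_0) = 1$ and $\res^2_{D_a,P}(f\omega_0) = f(P)\res^2_{D_a,P}(\omega_0)$ for all $P \in \supp(\Delta)$ and $f \in \OP$. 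I take $K := (\omega_0)$, which is a canonical divisor on $S$.

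First I would set up the natural candidate for an isomorphism between the two Riemann--Roch spaces. Given $f \in L(K - G + D)$, the $2$-form $f\omega_0$ satisfies $(f\omega_0) = (f) + K \geq G - D + (K - G + D) $... more precisely $(f\omega_0) = (f) + (\omega_0) \geq -(K-G+D) + K = G - D$, so $f\omega_0 \in \Omega^2(G-D)$; conversely any $\omega \in \Omega^2(G-D)$ can be written $\omega = f\omega_0$ with $f := \omega/\omega_0 \in k(S)$ (using $\dim_{k(S)}\Omega^2_{k(S)/k} = 1$), and chasing the divisor inequality backwards shows $f \in L(K - G + D)$. Thus multiplication by $\omega_0$ gives a $k$-linear bijection $L(K-G+D) \xrightarrow{\sim} \Omega^2(G-D)$.

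Next I would compare the evaluation-type maps under this bijection. For $f \in L(K-G+D)$ and each $P_i \in \supp(\Delta)$, property (\ref{vxx}) of Corollary \ref{omega0} together with (\ref{vvx}) gives
$$
\res^2_{D_a,P_i}(f\omega_0) = f(P_i)\,\res^2_{D_a,P_i}(\omega_0) = f(P_i),
$$
where one must check that $f$ is indeed regular at $P_i$ so that $f(P_i)$ makes sense: since $G$ avoids $\supp(\Delta)$ by hypothesis and ${\omega_0}$ has divisor exactly $-D$ near $\supp(\Delta)$, the condition $(f) \geq -(K-G+D)$ localizes near $P_i$ to $(f) \geq -(D) + D = 0$, i.e. $f \in \mathcal{O}_{S,P_i}$. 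Hence $\res^2_{D_a,\Delta}(f\omega_0) = \mathrm{ev}_\Delta(f)$ coordinatewise, so the image of $\res^2_{D_a,\Delta}$ on $\Omega^2(G-D) = \omega_0 \cdot L(K-G+D)$ equals the image of $\mathrm{ev}_\Delta$ on $L(K-G+D)$; that is $C_\Omega(\Delta, D_a, D_b, G) = C_L(\Delta, K-G+D)$.

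\textbf{The main obstacle} I anticipate is the bookkeeping in the divisor inequalities: one must be careful that $\Omega^2(G-D)$ is defined with the convention $(\omega) - (G-D) \geq 0$, so $\omega \in \Omega^2(G-D) \iff (\omega) \geq G - D$, and that the passage $\omega = f\omega_0$ translates this into $(f) \geq G - D - (\omega_0) = G - D - K$, i.e. $f \in L(K - G + D)$ --- the signs must line up exactly, and the global statement $K = (\omega_0)$ is what makes this a genuine canonical divisor rather than only a local datum. The other point requiring a word of care is regularity of $f$ at the $P_i$: this is where the hypothesis that $\supp(\Delta)$ avoids $\supp(G)$ and the local normalization $({\omega_0}_{|U}) = -D_{|U}$ of Corollary \ref{omega0} are both genuinely used, ensuring $\mathrm{ev}_\Delta(f)$ is well-defined and that the identity $\res^2_{D_a,P_i}(f\omega_0) = f(P_i)$ is legitimate. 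Everything else is the formal transport of the two spaces and their evaluation maps through the isomorphism ``multiply by $\omega_0$''.
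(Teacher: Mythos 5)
Your proposal is correct and follows essentially the same route as the paper: take $\omega_0$ from Corollary \ref{omega0}, set $K:=(\omega_0)$, observe that multiplication by $\omega_0$ identifies $L(K-G+D)$ with $\Omega^2(G-D)$, check regularity of $f$ at the $P_i$ from the local shape of $K$ near $\supp(\Delta)$, and conclude via property (\ref{vxx}). Your write-up is simply a more detailed version of the paper's argument (including the sign bookkeeping, which the paper states with a small typo, $K-G+D=R-G$ rather than $G+R$).
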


\begin{proof}
From corollary \ref{omega0}, there exists a $2$-form $\omega_0$ satisfying (\ref{vvv}), (\ref{vvx}) and (\ref{vxx}). Set $K:=(\omega_0)$, this divisor is of the form $K=-D+R$ where the support of $R$ avoids that of $\Delta$.
Let $\omega \in \Omega^2(G-D)$, then for some function $f\in L(K-G+D)$, we have $\omega=f\omega_0$. Notice that $K-G+D=G+R$, then any function $f\in L(K-G+D)$ is regular in a neighborhood of each $P\in \supp (\Delta)$.   
Consequently, from condition (\ref{vxx}) in corollary \ref{omega0}, we have
$$
\res^2_{D_a,\Delta}(\omega)=\res^2_{D_a,\Delta}(f\omega_0)=ev_{\Delta}(f).
$$
\end{proof}

\noindent Any differential code is actually a functional one. Notice that, if the converse statement is trivial for codes on curves, it is less easy in our situation. Indeed, to prove that a functional code is differential, we have to build a $\Delta$-convenient pair of divisors.

\subsection{Converse statement, a functional code is differential}

\begin{lem}\label{exconv}
Let $Q_1,\ldots, Q_m$ be rational points of $S$ and set $\Gamma:=Q_1+\cdots+Q_m$. Then, there exists a $\Gamma$-convenient pair $(D_a, D_b)$.  
\end{lem}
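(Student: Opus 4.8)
The goal is to exhibit, for an arbitrary finite set of rational points $Q_1,\dots,Q_m$ of $S$ with $\Gamma = Q_1+\cdots+Q_m$, a $\Gamma$-convenient pair $(D_a,D_b)$; by Proposition~\ref{crit} it suffices to produce a pair satisfying the (easier to check) criterion listed there. The idea is to mimic Example~\ref{p2}: at each $Q_i$ we want a curve through $Q_i$ which is smooth there, appears with coefficient one, and meets the rest of $D$ transversally; away from $\supp(\Gamma)$ we want at least one of $D_a,D_b$ to be supported on curves that are either absent, or smooth with coefficient one and meeting the complementary divisor with non-positive multiplicity.

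First I would, for each $i$, choose a curve $C_i$ passing through $Q_i$ and smooth at $Q_i$. Concretely one can pick a very ample divisor class $H$ on $S$, and by a Bertini-type/general-position argument over the (possibly finite, but we are allowed to pass to the algebraic closure for the geometric conditions of Definition~\ref{Deltac} and then descend since everything can be taken $\F_q$-rational after a base extension that does not matter — or, more cleanly, after replacing $H$ by a large multiple so that the relevant linear systems have enough $\F_q$-points) choose an irreducible curve $C_i \in |H|$ that is smooth at $Q_i$ and passes through $Q_i$. Then I would similarly choose a second curve $C_i' \in |H|$ smooth at $Q_i$, passing through $Q_i$, and meeting $C_i$ transversally at $Q_i$ (i.e.\ with $m_{Q_i}(C_i, C_i') = 1$); transversality at a prescribed smooth point is a generic condition on the pencil. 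Set $D_a := \sum_i C_i$ and $D_b := \sum_i C_i'$ — or, to avoid accidental common components and accidental singular/ high-multiplicity intersections at the other $Q_j$'s and elsewhere, perturb each $C_i, C_i'$ within its linear system to a curve still satisfying the local condition at $Q_i$ but otherwise in general position with respect to the finite set of points $\{Q_j\}$ and to each other.

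The verification then splits as in Proposition~\ref{crit}. At a point $P = Q_i \in \supp(\Gamma)$: the curve $C_i$ is smooth at $P$, it is the only component of $D_a^+$ through $P$ (after the general-position perturbation no other $C_j$ nor any $C_j'$ passes through $Q_i$), it appears with coefficient one, and $m_P(C_i, D-C_i) = m_P(C_i, C_i') = 1$ by construction — so condition~(1) holds. At a point $P \notin \supp(\Gamma)$: by general position $P$ lies on at most one component of $D_a^+$ (or of $D_b^+$), that component is smooth at $P$ (the $C_i$ were chosen with no singular points outside a controlled finite set, which we arrange to miss the issue, or simply: a general member of a very ample system is smooth), appears with coefficient one, and meets the rest of $D$ with intersection multiplicity $\le 0$ — indeed if only one component of, say, $D_a$ passes through $P$ and no component of $D_b$ does, then locally $D - \overline C$ vanishes near $P$ so $m_P(\overline C, D-\overline C) = 0$. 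Picking at each such $P$ whichever of $D_a, D_b$ is "cleaner" gives condition~(2). Hence $(D_a,D_b)$ meets the criterion and is $\Gamma$-convenient.

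The main obstacle is the general-position / Bertini step done over a finite field: a naive Bertini argument produces curves over $\overline{\F}_q$, whereas $\F_q$-rationality of $D_a, D_b$ is wanted, and smoothness/transversality at a prescribed $\F_q$-rational point together with irreducibility must be achieved simultaneously. The standard fix is to work with a sufficiently high multiple $nH$ of a fixed ample class so that the sub-linear-systems cutting out the finitely many local conditions (pass through $Q_i$ with a chosen tangent behaviour, avoid the finitely many other points, be transversal to the previously chosen curves at the $Q_i$'s) are non-empty and their "bad loci" are proper closed subsets; since these are finitely many codimension-$\ge 1$ conditions and $|\F_q(nH\text{-system})|$ grows, an $\F_q$-rational member avoiding all bad loci exists — and irreducibility of a general member of a very ample system is automatic in dimension $2$. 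Once this input is granted, the rest is the bookkeeping sketched above. I would remark that effectivity of $D_a, D_b$ is harmless here (unlike the last example in §\ref{codesonsurfaces}) precisely because we are free to spread the curves out, so the non-positive intersection condition~(2c) is obtained "for free" from having a single local component.
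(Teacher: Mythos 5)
Your construction cannot work as stated, and the obstruction is precisely the one you dismiss in your closing remark. You take $D_a=\sum_i C_i$ and $D_b=\sum_i C_i'$ with all components in a very ample class $nH$ on a \emph{projective} surface, so every $C_i$ and every $C_j'$ satisfy $C_i\cdot C_j'=(nH)^2>0$: they must meet somewhere on $S$. The total intersection number $D_a\cdot D_b=m^2(nH)^2$ far exceeds the number $m$ of points of $\supp(\Gamma)$, so there are necessarily points $P\in\supp(D_a)\cap\supp(D_b)$ with $P\notin\supp(\Gamma)$. At such a $P$, condition (2c) of proposition \ref{crit} fails for both choices of $D_*$: whichever component $\overline C$ of $D_a^+$ (or $D_b^+$) you test, $D-\overline C$ still contains an \emph{effective} component of the other divisor through $P$, so $m_P(\overline C, D-\overline C)\geq 1$. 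This is not an artifact of the criterion: condition $(iii)$ of definition \ref{Deltac} itself is violated, since locally at such a transversal crossing one can write down $\omega\in\mathcal F_P$ with $\res^2_{D_a,P}(\omega)=\pm 1$. ``General position'' only relocates these crossings; it cannot remove them, because their number is fixed by the intersection pairing. The paper's own second example in section \ref{codesonsurfaces} ($\Gamma=P_1+P_2+P_3$ three non-collinear points of $\P^2$) is a case where \emph{no} $\Gamma$-convenient pair of effective divisors exists at all, so any proof of lemma \ref{exconv} that produces two effective divisors is doomed.

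The paper's proof gets around this exactly by dropping effectivity of $D_b$. One takes $D_a$ to be a single (possibly reducible) interpolating curve, smooth at the $Q_i$, with all components of coefficient one; then a second interpolating divisor $D'$ meeting $D_a$ in $\Gamma+\Gamma'$, where $\Gamma'$ collects the unavoidable extra intersection points; and finally a correction divisor $D''$ with $D''\cap D_a=\Gamma'+\Gamma''$, $\supp(\Gamma'')\cap\supp(\Gamma)=\emptyset$, setting $D_b:=D'-D''$. The subtraction makes $m_P(\overline C, D-\overline C)\leq 0$ at every point of $\Gamma'$ (and the points of $\Gamma''$ lie on $D''$ but off $D_a^+$, so they cause no harm), which is how condition (2c) is salvaged. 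Your Bertini/interpolation discussion over $\F_q$ is fine as far as it goes, but it addresses a secondary issue; the essential missing idea is the non-effective correction term.
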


\begin{proof}
\textbf{Step 1: Construction of $\mathbf{D_a}$.} 
Choose a curve $C$ (which may be reducible) containing the whole support of $\Gamma$ and regular at each point of it and set $D_a:=\sum_k C_k$ where $C_k$'s are the irreducible components of $C$.
Finding such a curve is an interpolation problem with infinitely many solutions.

\medbreak

\noindent \textbf{Step 2: Construction of $\mathbf{D_b}$.} Choose another divisor $D'$ interpolating all the points of $\supp (\Gamma)$ and having no common component with $D_a$. 
Let $\Lambda$ be the $0$-cycle given by the scheme-theoretic intersection $D_a\cap D'$. Unfortunately, the support of $\Lambda$ might be bigger than that of $\Gamma$. Thus, 
we have $\Lambda=\Gamma+ \Gamma'$ where $\Gamma'$ is an effective $0$-cycle. Now choose a divisor $D''$ such that $D''\cap D_a =\Gamma'+\Gamma''$ where $\Gamma''$ and $\Gamma$ have disjoint supports. Set $D_b:=D'-D''$. The pair $(D_a,D_b)$ satisfies the criterion of proposition \ref{crit}, which concludes the proof.
\end{proof}

\begin{thm}
Let $G$ be a rational divisor on $S$, then for some canonical divisor $K$ and some divisor $D:=D_a+D_b$ such that $(D_a,D_b)$ is $\Delta$-convenient, we have
$$C_L(\Delta,G)=C_{\Omega}(\Delta,D_a,D_b, K-G+D).$$  
\end{thm}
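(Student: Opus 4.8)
The plan is to combine the two main results proved just above: Theorem \ref{dif=fonc}, which says that for any $\Delta$-convenient pair $(D_a,D_b)$ one has $C_{\Omega}(\Delta,D_a,D_b,G')=C_L(\Delta,K-G'+D)$ for a suitable canonical divisor $K$ depending on the pair, and Lemma \ref{exconv}, which guarantees the existence of a $\Delta$-convenient pair $(D_a,D_b)$ once the $0$-cycle $\Delta$ is fixed. The point is simply to run Theorem \ref{dif=fonc} ``backwards'': instead of starting from $G'$ and producing $K-G'+D$, I start from the target divisor $G$ and choose the auxiliary divisor $G'$ that makes $K-G'+D=G$.

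First I would invoke Lemma \ref{exconv} with $\Gamma=\Delta$ to obtain a $\Delta$-convenient pair $(D_a,D_b)$; set $D:=D_a+D_b$. Next, apply Corollary \ref{omega0} to this pair to get the $2$-form $\omega_0$ with $(\omega_0)_{|U}=-D_{|U}$ on an open set $U\supseteq\supp(\Delta)$, together with the normalization $\res^2_{D_a,P}(\omega_0)=1$ and the $\mathcal{O}_{S,P}$-linearity property at each $P\in\supp(\Delta)$. Put $K:=(\omega_0)$, the canonical divisor attached to this pair; as noted in the proof of Theorem \ref{dif=fonc}, $K=-D+R$ with $\supp(R)\cap\supp(\Delta)=\emptyset$. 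Now define $G':=K-G+D$. Then $K-G'+D = K-(K-G+D)+D = G$, so Theorem \ref{dif=fonc} applied with the divisor $G'$ in the role of ``$G$'' yields
$$
C_{\Omega}(\Delta,D_a,D_b,G')=C_L(\Delta,K-G'+D)=C_L(\Delta,G),
$$
that is, $C_L(\Delta,G)=C_{\Omega}(\Delta,D_a,D_b,K-G+D)$, which is exactly the claimed identity (with $G'=K-G+D$ the space of $2$-forms being evaluated).

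I do not expect any genuine obstacle here: the statement is essentially a bookkeeping corollary of the two preceding results, and the only thing to be careful about is that the canonical divisor $K$ is not arbitrary but is precisely the one produced by Corollary \ref{omega0} from the chosen $\Delta$-convenient pair — so $K$, $D_a$, $D_b$ must all be exhibited together, not independently. One should also check that nothing in Theorem \ref{dif=fonc} required the divisor being substituted to be effective or otherwise restricted; inspection of its proof shows it only used that $(D_a,D_b)$ is $\Delta$-convenient and that $\omega_0$ satisfies the three conditions of Corollary \ref{omega0}, both of which are already in hand. Hence the proof is just the substitution $G'=K-G+D$ into Theorem \ref{dif=fonc}, preceded by the existence statement of Lemma \ref{exconv}.
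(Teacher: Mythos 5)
Your proof is correct and follows exactly the paper's own route: Lemma \ref{exconv} for the existence of the $\Delta$-convenient pair, Corollary \ref{omega0} to produce $\omega_0$ and set $K:=(\omega_0)$, then Theorem \ref{dif=fonc} applied with $G':=K-G+D$. The paper states this more tersely, but the argument is the same; your remark that $K$ must be exhibited together with $(D_a,D_b)$ rather than chosen independently is the one point worth being explicit about, and you handle it correctly.
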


\begin{proof}
Lemma \ref{exconv} asserts the existence of a $\Delta$-convenient pair $(D_a,D_b)$.
Then, construct a $2$-form $\omega_0$ using corollary \ref{omega0}.
Set $K:=(\omega_0)$.
Now the result is an easy consequence of theorem \ref{dif=fonc}.
\end{proof}


\section{The reverse inclusion is false}\label{examples}

As said in section \ref{suborth}, if a differential code is included in the orthogonal of a functional one, the reverse inclusion is in general false. The study of the following example will prove this.

In this section, the surface $S$ is the product of two projective lines $S:=\P^1 \times \P^1$.
Let $U$ be an affine chart of $S$ with affine coordinates $x,y$.
The complement of $U$ in $S$ is a union of two lines $E$ and $F$.
The Picard group of $S$ is generated by the classes of $E$ and $F$. 
Thus, without loss of generality, one can choose for $G$ the divisor $G_{n,m}:=mE+nF$, with $m,n\in \Z$.
Finally, $\Delta$ is defined as the formal sum of all rational points of $U$.

\subsection{Functional codes on $\mathbf{P^1\times P^1}$}
On $U$, the vector space $L(G_{m,n})$ may be identified with $\F_q[x]_{\leq m}\otimes_{\F_q}F_q[y]_{\leq n}$, where $\F_q[t]_{\leq d}$ denotes the space of polynomials in $t$ with degree less than or equal to $d$.
Furthermore, the functional code $C_L(\Delta, G_{m,n})$ may be identified with a tensor product of two codes on the projective line, which are Reed-Solomon codes. Thus,
\begin{equation}\label{tensRS}\tag{$\blacklozenge$}
C_L(\Delta, G_{m,n})=RS_q(m+1)\otimes_{\F_q} RS_q(n+1),
\end{equation} 
where $RS_q(k)$ denotes the Reed-Solomon code over $\F_q$ of length $q$ and dimension $k$.

\subsection{Orthogonal of functional codes on $\mathbf{P^1\times P^1}$}

In this subsection, we prove that the orthogonal of some functional code on $\P^1 \times \P^1$ cannot be differential.

\begin{prop}
  Let $m,n$ be two integers such that  $0\leq n,m < q-2$, then for all $\Delta$-convenient pair of divisors $(D_a,D_b)$, we have
$$C_{\Omega}(\Delta,D_a,D_b, G_{m,n})\varsubsetneq C_L(\Delta, G_{m,n})^{\bot}.$$
\end{prop}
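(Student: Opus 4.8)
The strategy is a dimension count. On one side, from the tensor-product description $(\blacklozenge)$ we know $\dim C_L(\Delta,G_{m,n}) = (m+1)(n+1)$, so $\dim C_L(\Delta,G_{m,n})^\perp = q^2 - (m+1)(n+1)$. On the other side, I would bound $\dim C_{\Omega}(\Delta,D_a,D_b,G_{m,n})$ from above by $\dim \Omega^2(G_{m,n}-D)$, since the code is the image of the $\F_q$-linear map $\res^2_{D_a,\Delta}$ on that Riemann–Roch space. The goal is to show this upper bound is strictly smaller than $q^2-(m+1)(n+1)$ for \emph{every} $\Delta$-convenient choice of $(D_a,D_b)$, so that the inclusion of Theorem~\ref{inc1} is necessarily strict.

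\textbf{Key steps.} First I would pin down what a $\Delta$-convenient pair on $S=\P^1\times\P^1$ must look like near $\supp(\Delta)$: by condition~(1) of the criterion (or directly from Definition~\ref{Deltac}), at each of the $q^2$ points of $U$ the divisor $D=D_a+D_b$ must have two smooth branches crossing transversally, one from $D_a^+$ and one from $D_b^+$. Counting incidences, $D_a^+$ restricted to $U$ must pass through all $q^2$ points, and likewise $D_b^+$; moreover the two families must be "transverse" in the grid sense. This forces $D_a^+$ and $D_b^+$ to have fairly large degrees: a curve in $\P^1\times\P^1$ of bidegree $(\alpha,\beta)$ meets a fixed fibre in $\alpha$ or $\beta$ points, so to cover $q$ values of $x$ for each of $q$ values of $y$ one needs the total $x$-degree of $D_a^+$ (counted along fibres of the second projection) to be at least $q$, and similarly for $D_b^+$ in the other direction. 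I would turn this into a lower bound on the bidegrees of $D$, hence (since $G_{m,n}$ is fixed and small) an \emph{upper} bound on $\dim\Omega^2(G_{m,n}-D)$ via the formula for $h^0$ of line bundles on $\P^1\times\P^1$: $\Omega^2_{\P^1\times\P^1}$ has bidegree $(-2,-2)$, so $\Omega^2(G_{m,n}-D) = H^0(\mathcal{O}(m-2-d_1,\,n-2-d_2))$ where $(d_1,d_2)$ is the bidegree of $D$, which has dimension $\max(0,m-1-d_1)\cdot\max(0,n-1-d_2)$. With $d_1,d_2\geq q$ (roughly) and $m,n<q-2$, this is often forced to be $0$ or at least far below $q^2-(m+1)(n+1)$.

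\textbf{Main obstacle.} The delicate point is making the incidence/degree argument fully rigorous while allowing $D_a,D_b$ to be \emph{non-effective} and \emph{reducible}, and allowing components that wander off through $E\cup F$; the earlier examples (e.g. Example~\ref{p2}) show the components may meet each other at infinity, so I cannot assume anything global beyond what Definition~\ref{Deltac} gives at points of $U$. I would handle this by working only with $D_a^+,D_b^+$ (the negative parts only enlarge $\Omega^2(G-D)$ in a controlled, bounded way — in fact condition~(2c) and linearity force the negative part to be disjoint from $\supp\Delta$, and one bounds its contribution) and by the elementary fact that a curve on $S$ passing through all $q^2$ points of the grid $U$ must have bidegree with both entries $\geq q$ — indeed if the bidegree were $(\alpha,\beta)$ with $\alpha<q$, then some vertical fibre would be forced to be a component, and iterating this the full "vertical" part has bidegree $\geq(q,0)$, etc. Once the bidegree bound $d_1,d_2\gtrsim q$ is in hand, the $h^0$ computation closes the argument: $\dim C_\Omega \leq h^0(\Omega^2(G_{m,n}-D)) = 0 < q^2-(m+1)(n+1) = \dim C_L(\Delta,G_{m,n})^\perp$, the last inequality being exactly $0\leq n,m<q-2$ (so $(m+1)(n+1)<q^2$). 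Hence the inclusion is strict.
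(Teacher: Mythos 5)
Your dimension count breaks down at the Riemann--Roch step, and the error is not cosmetic. By definition $\Omega^2(G-D)=\{\omega:(\omega)-G+D\geq 0\}$, so writing $\omega=f\omega_0$ with $(\omega_0)=K$ identifies it with $L(K-G+D)$, a line bundle of bidegree $(d_1-m-2,\;d_2-n-2)$ --- not $(m-2-d_1,\;n-2-d_2)$ as you wrote. Forcing $D$ to have large bidegree therefore makes this space \emph{larger}, not smaller: its dimension is $(d_1-m-1)(d_2-n-1)$, which for $d_1,d_2\gtrsim q$ already exceeds $q^2$. Your conclusion $\dim C_\Omega=0$ is contradicted by the paper's own explicit computation in the same section, where $C_{\Omega}(\Delta,D_1,D_3,G_{m,n})=\F_q^q\otimes RS_q(q-2-n)$ has dimension $q(q-2-n)>0$. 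So the crude bound $\dim C_\Omega\leq\dim\Omega^2(G-D)$ gives nothing.

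Even the repaired dimension count cannot work uniformly. The sharp statement is Theorem \ref{dif=fonc}: $C_\Omega(\Delta,D_a,D_b,G_{m,n})=C_L(\Delta,K-G_{m,n}+D)$, hence by $(\blacklozenge)$ it is a tensor product $RS_q(a)\otimes RS_q(b)$ of dimension $ab$ with $a,b\leq q$. One would need $ab<q^2-(m+1)(n+1)=\dim C_L(\Delta,G_{m,n})^{\bot}$ for every admissible $(D_a,D_b)$, but the two sides can coincide numerically (e.g.\ $q=8$, $m=n=3$ gives $q^2-16=48=6\cdot 8$), so no inequality of dimensions alone can decide the question. This is precisely why the paper argues structurally rather than numerically: $C_L(\Delta,G_{m,n})^{\bot}=RS_q(m+1)^{\bot}\otimes\F_q^q+\F_q^q\otimes RS_q(n+1)^{\bot}$ by Lemma \ref{orthotens}, with both summand factors nonzero and proper exactly because $0\leq m,n<q-2$; Lemma \ref{lemtens} shows such a sum is never an elementary tensor product; and Theorem \ref{dif=fonc} shows every differential code here \emph{is} an elementary tensor product. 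The missing idea in your proposal is this structural incompatibility --- the fact that a differential code is always functional, hence of tensor-product shape, while the dual of a functional code is not.
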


\begin{proof}
From (\ref{tensRS}) and lemma \ref{orthotens} in appendix \ref{tens}, we have
$$
C_L(\Delta, G_{m,n})^{\bot}= RS_q(m+1)^{\bot} \otimes \F_q^q +\F_q^q \otimes
RS_q(n+1)^{\bot}.
$$
Suppose that for some $\Delta$-convenient pair $(D_a,D_b)$, we had
$$C_{\Omega}(\Delta,D_a,D_b, G_{m,n})=C_L(\Delta, G_{m,n})^{\bot}.$$

\noindent From theorem \ref{dif=fonc}, the code $C_{\Omega}(\Delta,D_a,D_b,G_{m,n})$ is functional. Thus, from (\ref{tensRS}), it is a tensor product of two Reed-Solomon codes.
But $C_L(\Delta, G_{m,n})^{\bot}$ is of the form $A\otimes \F_q^q +\F_q^q \otimes B$ with $A , B$ nonzero and strictly contained in $\F_q^q$.
This contradicts lemma \ref{lemtens} in appendix \ref{tens}.
\end{proof}

\begin{rem}
  The condition $0 \leq m,n < q-2$ asserts that in the tensor product representation $C_L(\Delta, G_{m,n})=RS_q(m+1) \otimes RS_q(n+1)$, none of the terms of the tensor product is zero or $\F_q^q$.
\end{rem}

A solution to avoid this lack of reverse inclusion is to try to construct the orthogonal as a sum of differential codes.
The purpose of next subsection is the realization of $C_L(\Delta,G_{m,n})^{\bot}$ as a sum of two differential codes associated with two distinct $\Delta$-convenient pairs.




\subsection{A construction of the orthogonal code}
For each $\alpha \in \F_q$, consider the lines $D_{1,\alpha}:=\{x=\alpha\}$, $D_{2, \alpha}:=\{y=\alpha\}$  $D_{3,\alpha}:=\{x-y-\alpha \}$. Now set
$$D_1:= \sum_{\alpha \in \F_q} D_{1, \alpha},\quad D_2:= \sum_{\alpha \in \F_q} D_{2,\alpha}
\quad and \quad D_3:=\sum_{\alpha \in \F_q}D_{3,\alpha}$$

\noindent Pairs $(D_1,D_3)$ and $(D_2,D_3)$ are $\Delta$-convenient. Using them, one can realize the orthogonal of $C_L(\Delta, G_{m,n})$ as a sum of two differential codes.

\begin{prop}
The three following relations are satisfied.
$$
\begin{array}{lccl}
(i) & C_{\Omega}(\Delta, D_1,D_3,G_{m,n}) & = & {\F}_q^q \otimes RS_q(q-2-n).  \\
(ii) & C_{\Omega}(\Delta,D_2,D_3,G_{m,n}) & = & RS_q(q-2-m) \otimes {\F}_q^q. \\
(iii) & {C_L(\Delta,G_{m,n})}^{\bot} & = & C_{\Omega}(\Delta,D_1,D_3 ,G_{m,n})+ C_{\Omega}(\Delta,D_2, D_3,G_{m,n}).
\end{array}
$$
\end{prop}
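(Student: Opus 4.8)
The strategy is to compute each of the three differential codes explicitly by exploiting the product structure $S = \P^1 \times \P^1$ and the fact that all divisors involved are built from the coordinate lines. For $(i)$, I would work in the affine chart $U$ with coordinates $x,y$ and use a concrete canonical form. Since $D_1 = \sum_\alpha \{x=\alpha\}$ has local equation $x^q - x$ near any affine point (up to a unit), and $D_3 = \sum_\alpha \{x-y-\alpha\}$ has local equation $(x-y)^q-(x-y)$, the standard candidate $2$-form is
\begin{equation*}
\omega_0 := \frac{dx \wedge dy}{(x^q-x)\bigl((x-y)^q-(x-y)\bigr)},
\end{equation*}
which, one checks, has divisor $-D_1-D_3$ on $U$ and satisfies $\res^2_{D_1,P}(\omega_0)=1$ at every rational $P\in U$ (compute the $1$-residue along each line $\{x=\alpha\}$, getting $dy$ divided by a unit, then take the residue in $y$). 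Applying Theorem \ref{dif=fonc} with this $\omega_0$ gives $K:=(\omega_0)$ and identifies $C_{\Omega}(\Delta,D_1,D_3,G_{m,n})$ with a functional code $C_L(\Delta, K-G_{m,n}+D)$; translating the divisor class back through $(\ref{tensRS})$ yields a tensor product $RS_q(a)\otimes RS_q(b)$ for explicit $a,b$, and the bookkeeping should give $a=q$, i.e. $\F_q^q$, in the first factor (because $D_1$ and $D_3$ together impose no constraint in the $x$-direction once one quotients by $G_{m,n}$ appropriately) and $b=q-2-n$ in the second. The computation for $(ii)$ is identical after exchanging the roles of $x$ and $y$ (using $D_2$ and $D_3$, with $\omega_0' = dx\wedge dy/\bigl((y^q-y)((x-y)^q-(x-y))\bigr)$).

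**Proving $(iii)$.** Given $(i)$ and $(ii)$, the right-hand side of $(iii)$ equals $\F_q^q\otimes RS_q(q-2-n) + RS_q(q-2-m)\otimes \F_q^q$. I would then invoke the orthogonality computation already used in the preceding proposition: by $(\ref{tensRS})$ and Lemma \ref{orthotens} in the appendix,
\begin{equation*}
C_L(\Delta,G_{m,n})^{\bot} = RS_q(m+1)^{\bot}\otimes \F_q^q + \F_q^q\otimes RS_q(n+1)^{\bot},
\end{equation*}
so it suffices to identify the duals of the Reed–Solomon codes: the dual of $RS_q(m+1)$ (evaluation of polynomials of degree $\le m$ at all $q$ points of $\F_q$) is a generalized Reed–Solomon code, and under the all-ones column multiplier it is exactly $RS_q(q-1-m)$; accounting for the fact that the relevant evaluation points are the $q$ elements of $\F_q$ (not $q-1$ nonzero ones) one gets $RS_q(m+1)^{\bot} = RS_q(q-1-m)$... here I must be careful: the residue/differential construction on the line naturally produces $RS_q(q-2-m)$ rather than $q-1-m$ because the point at infinity is excluded, and this is precisely why the differential codes in $(i)$–$(ii)$ have parameters $q-2-n$ and $q-2-m$. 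So I would match $RS_q(m+1)^{\bot}$ with $RS_q(q-2-m)$ (this is the standard fact that the dual of a full-length RS code on $\F_q^\ast$-style evaluation is again RS, here realized through $\Omega^1$), giving
\begin{equation*}
C_L(\Delta,G_{m,n})^{\bot} = RS_q(q-2-m)\otimes \F_q^q + \F_q^q \otimes RS_q(q-2-n),
\end{equation*}
which is literally the sum in $(iii)$. The inclusion $\subseteq$ in $(iii)$ follows from Theorem \ref{inc1} applied to each pair, and the reverse inclusion follows from the dimension/identification above.

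**Main obstacle.** The conceptual steps are routine, but the delicate point is getting the numerical parameters exactly right: tracking the canonical divisor $K=(\omega_0)$ through the isomorphism $\mathrm{Pic}(S)\cong\Z E\oplus\Z F$, correctly handling the contribution of the lines $E,F$ at infinity (where $\omega_0$ acquires zeros or poles that must be recorded in $R$ with $K=-D+R$ as in Theorem \ref{dif=fonc}), and reconciling the "$q$ versus $q-1$ versus $q-2$" shifts that arise because $\Delta$ consists of the $q^2$ \emph{affine} points while the divisors $D_1,D_2,D_3$ have all their self-intersections pushed to the line at infinity. I expect most of the work to be a careful local computation of $\omega_0$ near a point of $E\cup F$ and a clean statement of the RS-duality used, after which $(iii)$ is immediate. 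A secondary check needed throughout is that the hypotheses $0\le m,n$ keep the RS parameters in the valid range $0\le q-2-m, q-2-n \le q$, so that the tensor factors are genuinely the codes claimed and not degenerate ($0$ or $\F_q^q$) cases.
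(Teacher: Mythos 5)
Your proposal follows essentially the same route as the paper: the $2$-form you write down is (up to the sign from $dy\wedge dx=-dx\wedge dy$) exactly the form $\nu$ used there, and the ``bookkeeping'' you defer is precisely the paper's computation $(\nu)=(2q-2)E+(q-2)F-D_1-D_3$ via $D_1\sim qE$, $D_3\sim q(E+F)$ and the canonical class $-2(E+F)$, followed by Theorem \ref{dif=fonc}, the identification (\ref{tensRS}), and the observation that the first tensor factor is all of $\F_q^q$ because $2q-2-m\geq q-1$. The deduction of $(iii)$ from $(i)$, $(ii)$, relation (\ref{tensRS}) and Lemma \ref{orthotens}, together with the symmetry giving $(ii)$ from $(i)$, is likewise the argument given in the paper, so the two proofs coincide.
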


\begin{proof}
As said in last proof, relation (\ref{tensRS}) and lemma \ref{orthotens} entail
$$
C_L(\Delta, G_{m,n})^{\bot}= RS_q(m+1)^{\bot} \otimes \F_q^q +\F_q^q \otimes
RS_q(n+1)^{\bot}.
$$

\noindent Consequently, $(i)+(ii)\Rightarrow (iii)$. Furthermore, by symmetry $(i) \Leftrightarrow (ii)$. Thus, we only have prove $(i)$. Set
$$
\nu:= \frac{dy}{ \prod_{\beta \in \F_q}(x-\beta)} \w
\frac{ dx}{\prod_{\alpha \in \F_q}(x-y-\alpha)}.
$$
This form satisfies conditions (\ref{vvv}), (\ref{vvx}) and (\ref{vxx}) in corollary \ref{omega0}.
Compute the divisor of $\nu$. On $U$, we have $(\nu_{|U})=-{D_3}_{|U}-{D_1}_{|U}$, moreover
$D_1\sim qE$ and $D_3 \sim q(E+F)$.
Since the canonical class on $\P^1 \times \P^1$ equals that of $-2(E+F)$, we have
$$
(\nu)=(2q-2)E+(q-2)F-D_1-D_3
$$
and
$$
\begin{array}{rcl}
 C_{\Omega}(\Delta, D_1,D_3 ,mE+nF) & = & C_L(\Delta,(2q-2-m)E+(q-2-n)F)\\
 & = & RS_q(2q-2-m)\otimes RS_q(q-2-n).
\end{array}
$$
To conclude, notice that if $m\leq q-1$, then $2q-2-m\geq q-1$ and
$RS_q(2q-2-m)$ equals $\F_q^q$.
\end{proof}

\section{Conclusion}

This new construction of codes generalizes the differential construction of codes on curves.
The main difference is that it is not always possible to realize the orthogonal of a functional code as a differential (or equivalently functional) one. 
A natural question comes from the study of last example.

\begin{question}
Is the orthogonal of a functional code a sum of differential codes?
If yes, is there a bound on the number of differential codes involved in this sum?  
\end{question}

Moreover, we now know that the orthogonal of a functional code might be non-functional. Consequently, the study of such codes might be interesting.

\appendix
\section{Proof of lemma \ref{yval}}\label{apyval}
If we prove the well-definition of the morphism $ k((u))[[v]]\rightarrow k((x))[[y]]$, then we conclude about that of the morphism $k((u))((v))\rightarrow k((x))((y))$, using the universal property of fraction rings.
First, we have to define a topology on $k((u))[[v]]$ (resp. $k((x))[[y]]$). Recall that
$$
k((u))[[v]]=\lim_{\longleftarrow} \fract k((u))[v] / (v^n).
$$
Afterwards, using the $(u)$-adic topology of $k((u))$, one can define a topology of projective limit on $k((u))[[v]]$. For this topology, a sequence $(s^{(n)})_{n\in \N}$ defined for all $n$ by $s^{(n)}=\sum_{j\in\N} s_j^{(n)}(u)v^j$ converges to zero if and only if
$$
\forall j\in \N,\ \lim_{n\rightarrow +\infty} s_j^{(n)}(u)=0,\ \textrm{for the}
\ (u)\textrm{-adic topology}.
$$
Afterwards, using a Cauchy criterion, one proves that, for this topology, a series of elements of $k((u))[[v]]$ converges if and only if its general term converges to zero.

\begin{rem}\label{weaker}
Notice that this topology on $k((u))[[v]]$ is weaker than the $(v)$-adic one (for which the subset $k((u))$ is discrete). Thus, if a sequence (resp. a series) converges for the $(v)$-adic topology, hence converges for the projective limit topology.
\end{rem}

\begin{proof}[Proof of lemma \ref{yval}]
\textbf{Step 1.} Recall that $f$ is of the form $f=f_0(x)+f_1(x)v+\cdots$ and such that $f_0$ has $(x)$-adic valuation one. We will prove that the sequence $(f^n)_{n\in \N}$ converges to zero. Let $i$ be a nonnegative integer, for $n$ large enough, the coefficient of $y^i$ in $f^n$ is of the form
$
f_0^n P_i(f_0,\ldots,f_i)
$, where $P_i$ is a polynomial which does not depend on $n$. Thus, for the $(x)$-adic topology this coefficient tends to zero. Consequently, for all Laurent series $\phi(u)\in k((u))$, the series $\phi(f(x,y))$ converges in $k((x))[[y]]$.

\medbreak

\noindent \textbf{Step 2.} The series $g$ has $(y)$-adic valuation one, thus the sequence $(g^n)_{n\in \N}$ converges to zero for the $(y)$-adic topology, hence for the projective limit topology (see remark \ref{weaker}). Using step 1, we conclude that for every series $\psi(u,v) \in k((u))[[v]]$, the series $\psi(f,g)$ converges in $k((x))[[y]]$. Moreover, its $(y)$-adic valuation equals the $(v)$-adic one of $\psi$.

\medbreak

\noindent \textbf{Step 3.} If $\omega$ is a formal form $\omega=h(u,v) du\w dv$ with $(v)$-adic valuation $n\in \Z$, then we have to prove that the $(y)$-adic valuation of $h(f,g)df\w dg$ is $n$ too.
If we prove that the $(y)$-adic valuation of $df\w dg$ is zero, then we can conclude using step 2. For that, consider the expression
$$
df\w dg= \Bigg( \underbrace{\frac{\p f}{\p x} \frac{\p g}{\p y}}_{val_{(y)}=0} -
\underbrace{ \frac{\p f}{\p y} \frac{\p g}{\p x}}_{val_{(y)}\geq 1} \Bigg) dx\w dy.
$$
This concludes the proof.

\end{proof}



\section{Proof of lemma \ref{Jacob}}\label{apJacob}
If $\omega=dA\w dB$ for some series $A,B\in k((u))((v))$, after a change of coordinates (\ref{cv}), $\omega=dF\w dG$
for some other series $F,G\in k((x))((y))$.
Then, in order to prove the lemma we only have to prove that the $(u,v)$-$2$-residue of $\omega=dA\w dB$ is zero.

We first introduce some notations.
Let $\rho$ and $\textrm{Jac}$ be the maps  
$$\rho :\left\{\begin{array}{ccc}
k((x))((y)) & \rightarrow & k((x)) \\
\sum_{i\geq -n} h_j(u)v^j & \mapsto & h_{-1}(u)
\end{array},\right.
\ 
\textrm{Jac} :\left\{\begin{array}{ccc}
k((x))((y))^2 & \rightarrow & k((x))((y)) \\
(A,B) & \mapsto & \frac{\partial A}{\partial u} \frac{\partial
  B}{\partial v} - \frac{\partial A}{\partial v} \frac{\partial B}{\partial u}
\end{array}.\right.
$$
Thus, $\omega=dA\w dB=\textrm{Jac}(A,B)du\w dv$. We will prove the following lemma.

\begin{lem}\label{mescouilles}
For all $A,B\in k((u))((v))$, we have $\rho(\textrm{Jac}(A,B))=\phi'(u)$ for some $\phi \in k((u))$, where $\phi'$ denotes the formal derivative of $\phi$. 
\end{lem}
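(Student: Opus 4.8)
The goal is to show that for $A,B \in k((u))((v))$, the coefficient of $v^{-1}$ in the Jacobian $\mathrm{Jac}(A,B) = \partial_u A\, \partial_v B - \partial_v A\, \partial_u B$ is a formal derivative $\phi'(u)$ for some $\phi \in k((u))$. The natural strategy is to exploit bilinearity and antisymmetry of $\mathrm{Jac}$ and the $\rho \circ \mathrm{Jac}$ construction, reduce to monomials $A = a(u) v^i$, $B = b(u) v^j$, compute $\mathrm{Jac}$ on these explicitly, extract the coefficient of $v^{-1}$, and recognise it as an exact derivative. Since $\rho$ and $\mathrm{Jac}$ are $k$-bilinear in $(A,B)$ (up to the usual caveat about infinite sums, which is handled by the convergence of Laurent series in the $(v)$-adic topology: only finitely many monomial pairs contribute to any fixed power of $v$), it suffices to treat the monomial case.

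For $A = a(u) v^i$ and $B = b(u) v^j$ we compute
\[
\mathrm{Jac}(A,B) = a'(u) b(u)\, j\, v^{i+j-1} - i\, a(u) b'(u)\, v^{i+j-1} = \bigl(j\, a'b - i\, ab'\bigr) v^{i+j-1}.
\]
Thus $\rho(\mathrm{Jac}(A,B))$ is nonzero only when $i + j - 1 = -1$, i.e. $i + j = 0$, say $j = -i$. In that case $\rho(\mathrm{Jac}(A,B)) = -i(a'b + ab') = -i\,(ab)'$, which is visibly the formal derivative of $\phi := -i\, ab \in k((u))$. This settles the monomial case. (When $i + j \neq 0$ the coefficient of $v^{-1}$ is zero, which is trivially $\phi'$ for $\phi = 0$.)

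To finish, I would assemble the general case. Write $A = \sum_i a_i(u) v^i$ and $B = \sum_j b_j(u) v^j$; then $\mathrm{Jac}(A,B) = \sum_{i,j} \mathrm{Jac}(a_i v^i,\, b_j v^j)$, where for each target exponent of $v$ only finitely many pairs $(i,j)$ contribute, so the rearrangement is legitimate. Extracting the coefficient of $v^{-1}$ picks out exactly the pairs with $j = -i$, giving
\[
\rho(\mathrm{Jac}(A,B)) = \sum_i (-i)\,(a_i b_{-i})' = \Bigl(\sum_i (-i)\, a_i b_{-i}\Bigr)',
\]
where the inner sum converges in $k((u))$ (each $a_i, b_{-i} \in k((u))$, and one checks the valuations grow so that the sum lies in $k((u))$ — this uses that $A, B$ are genuine Laurent series, bounded below in the $v$-adic valuation). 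Hence $\phi := -\sum_i i\, a_i b_{-i}$ works, and the term-by-term differentiation is justified because formal differentiation $k((u)) \to \Omega^1$ is continuous for the $(u)$-adic topology. The main obstacle, and the only real subtlety, is the convergence/continuity bookkeeping: making sure that $\sum_i i\, a_i b_{-i}$ actually defines an element of $k((u))$ and that $\rho$ commutes with the infinite sum and that $d/du$ commutes with it as well. All of this follows from the Laurent-series structure and the topological facts recorded in Appendix \ref{apyval}, so no genuinely new input is needed.
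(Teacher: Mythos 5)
Your proof is correct and follows essentially the same route as the paper's: reduce by bilinearity of $\mathrm{Jac}$ and linearity of $\rho$ to monomials in $v$, observe that only pairs with $i+j=0$ contribute to the coefficient of $v^{-1}$, and recognise $-i(a_ib_{-i})'$ as an exact derivative (the paper merely groups the monomials into three coarser cases: both parts regular, regular times polar, both polar). One small simplification: since $A$ and $B$ are Laurent series in $v$, the indices $i$ with $a_i\neq 0$ and $b_{-i}\neq 0$ form a bounded range, so your sum over $i$ is actually finite and the convergence bookkeeping you flag as the main subtlety is vacuous.
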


\begin{proof}
Maps $\textrm{Jac}$ and $\rho$ are respectively $k$-bilinear antisymmetric and $k$-linear.
Then, we can restrict the proof to the three following situations and extend it by linearity.
\begin{enumerate}
\item\label{sit1} $A, B \in k((u))[[v]]$.
\item\label{sit2} $A\in k((u))[[v]]$ and $B=\frac{b(u)}{v^{n}}$
  with $n\in \mathbf{N}^*$ and $b\in k((u))$.
\item\label{sit3} $A=\frac{a(u)}{v^{m}}$ and $B=\frac{b(u)}{v^{n}}$ with $m,n \in
  \mathbf{N}^*$ and $a,b\in k((u))$.
\end{enumerate}

\noindent Let us consider these three situations.

\noindent (\ref{sit1}) The series $A$ and $B$ don't have terms with negative powers of $v$, thus so are their partial derivatives, then $\rho \circ \textrm{Jac}(A,B)=0$.

\noindent (\ref{sit2}) The series $A$ is of the form $A=\sum_{j\geq 0}a_j(u)v^j$. Then, 
$$
\rho (\textrm{Jac} (A,B)) = -n(a_n'(u)b(u)+a_n(u)b'(u))=(-na_n(u)b(u))'.
$$

\noindent (\ref{sit3})
$$\textrm{Jac} (A,B)=\left(-n \frac{a' (u)b (u)}{v^{m+n+1}} - (-m)\frac{a (u)b'(u)}{v^{m+n+1}} \right).$$
Integers $m$ and $n$ are positives, there is no term in $v^{-1}$, thus $\rho (\textrm{Jac} (A,B))=0$.
\end{proof}

\noindent \textbf{Conclusion.}
Using lemma \ref{mescouilles}, we get $(u,v)\res^1_{C,P}(\omega)=\phi'(\bar{u})d\bar{u}$ and the coefficient of $u^{-1}$ in $\phi'$ is zero, because it is a derivative.  

\section{Proof of theorem \ref{inv2res} when $k$ has a positive characteristic}\label{apinv2res}

We only have to work on the points of the proof of proposition \ref{inv2res} in which we used specific properties of characteristic zero.
Thus, we will study the behavior under (CV2) of differentials of the form
$$\omega=\phi(u)du\w \frac{dy}{y^{n+1}},\quad \textrm{where}\quad \phi \in k((u))\quad \textrm{and}\quad n\geq 1.$$
Let $N$ be a nonnegative integer.
In what follows, we consider a change of variables of the form (CV2)
$$u=f(x,y),\quad \textrm{with}\quad f=\sum_{j\geq 0}f_j(x)y^j\quad \textrm{and}\quad f_0 \in xk[[x]]\smallsetminus x^2k[[x]],$$
such that $\min_{k=1\ldots n} \left\{val_{(x)}(f_k)\right\}=-N$, where $val_{(x)}$ denotes the $(x)$-adic valuation of an element of $k((x))$.

\medbreak

\noindent \textbf{Step 1.}
Assume that $\omega=u^mdu\w \frac{dy}{y^{n+1}}$ with $m\in \N$.
Then,
$$
\omega = (f'_0(x)+f'_1(x)y+\cdots)(f_0(x)+f_1(x)y+\cdots)^m dx \w \frac{dy}{y^{n+1}}.
$$

\noindent The $(x,y)$-$1$-residue of $\omega$ is the coefficient in $y^{n}$ of the series $f^m \p f/\p x$. This residue is of the form
$$
(x,y)\res^1_{C,P}(\omega)=P_{m,n}(f_0,\ldots,f_n,f'_0,\ldots,f'_n)d\bar{x},
$$
where $P_{m,n}\in \Z[X_0,\ldots,X_n,Y_0,\ldots, Y_n]$  depends neither on the field $k$ nor on $f$. Actually, $P_{m,n}$ depends only on $m$ and $n$. By the same way, its coefficient of $x^{-1}$ is a polynomial expression $Q$ in the $f_{i,j}$'s with $0 \leq j \leq n$ and $-N \leq i \leq N+1$, such that $Q$ has coefficients in $\Z$ and depends neither on $k$ nor on $f$. Furthermore, if $k$ has characteristic zero, we know from section \ref{general} that $Q$ vanishes on the set $\{f_{1,0}\neq 0\}$, hence is the zero polynomial.

\medbreak

\noindent \textbf{Step 2.}
Assume that $\omega=\phi(u)du\w \frac{dy}{y^{n+1}}$, with $\phi=\sum_{m\geq 0} \phi_m u^m \in k[[u]]$. From step 1, we have
\begin{equation}\label{seriegolo}
(x,y)\res^1_{C,P}(\omega)=\sum_{m\geq 0} \phi_m P_{m,n}(f_0,\ldots, f_n,f'_0,\ldots,f'_n)d\bar{x},
\end{equation} 
where $P_{m,n}$'s denote the polynomials involved in Step 1.
The $(x,y)$-$1$-residue of $\omega$ is well-defined. Thus, the series in (\ref{seriegolo}) converges in $k((x))$. Consequently, the $(x)$-adic valuation of it terms is positive for each $m\geq M$ and
$$
(x,y)\res^1_{C,P}(\omega)=\sum_{m=0}^M \phi_m P_{m,n} d\bar{x} + \sum_{m>M} \phi_m P_{m,n} d\bar{x}.
$$ 

\noindent The right term has positive $(x)$-adic valuation, thus its residue is zero. The left one has zero residue zero because of step 1 extended by linearity.

\medbreak

\noindent \textbf{Step 3.}
Assume that $\omega=\frac{du}{u^m} \w \frac{dy}{y^{n+1}}$, with $m \in \N$.
Then, $
\omega  = \frac{1}{f^m} \frac{\p f}{\p x}  dx \w \frac{dy}{y^{n+1}}
$.
We have to study the fraction $\frac{1}{f^m} \frac{\p f}{\p x}$.
First, compute its coefficient of $y^n$ corresponding to the $(x,y)$-$1$-residue. We have
\begin{equation}\label{RR}
 \frac{1}{f^m}= \frac{1}{f_0^m}
\left(1+ \frac{R_{m,1}(f_0,f_1)}{f_0}y+\cdots 
+\frac{R_{m,p}(f_0,\ldots,f_p)}{f_0^p}+\cdots \right),
\end{equation}
for some homogeneous polynomials $R_{m,i} \in \Z[X_0,\ldots,X_p]$ of degree $p$ and depending only on $m$. Thus, the coefficient of $y^n$ in $\frac{1}{f^m}\frac{\p f}{\p x}$ is
$$
C(x):= \frac{1}{f_0^{m}}\left(f'_n+ f'_{n-1}\frac{R_{m,1}(f_0,f_1)}{f_0}+\ldots+f'_0 \frac{R_{m,n}(f_0,\ldots,f_n)}{f_0^n} \right).
$$
For all $k \in \{1,\ldots,n\}$, set $S_{m,n,k}(f_0,\ldots,f_k):=f_0^{n-k}R_{m,k}(f_0,\ldots,f_k)$ and $S_{m,n,0}(f_0):=f_0^n$. Polynomials $S_{m,n,k}$'s are homogeneous of degree $n$ and
\begin{equation}\label{Cx}
C(x):= \underbrace{\frac{1}{f_0^{m+n}}}_{A(x)}\  \underbrace{\sum_{k=0}^n f_{n-k}' S_{m,n,k}(f_0,\ldots,f_k)}_{B(x)}.
\end{equation}
Recall that $f_0\in xk[[x]]$, that is $f_0:=f_{1,0}x+f_{2,0}x^2+\cdots$, then
$$
A(x)=\frac{1}{(f_{1,0}x)^{m+n}}\Bigg(1+\frac{R_{m,1}(f_{1,0},f_{2,0})}{f_{1,0}}x+\cdots \qquad \qquad \qquad  \qquad  \qquad 
$$

$$
\qquad \qquad \qquad \qquad  \qquad  \cdots +\frac{R_{m,p-1}(f_{1,0},f_{2,0},\ldots ,f_{p,0})}{f_{1,0}^p}x^p+\cdots \Bigg),
$$
for polynomials $R_{m,k}$ as in (\ref{RR}).
Finally, we want to express the coefficient $C_{-1}$ of $x^{-1}$ in $C(x)$.
Recall that, for all $p$, the degree of $S_{m,n,p}$ is $n$.
Therefore, there exists an integer $M$ and a polynomial $V\in \Z[X_{i,j}]$ with $-N \leq i \leq \max \left(m+n\ ,\ (n+1)N+1\right)$ and
$ \ 0 \leq j \leq n$, depending only on $m$, $n$ and $N$ and such that
$$C_{-1}=\frac{1}{f_{1,0}^M}V(f_{i,j}).$$

\noindent Over a field of characteristic zero, $V$ vanishes on the set $\{f_{1,0}\neq 0\}$, hence is the zero polynomial.

\begin{rem}
Notice that, in the whole proof, we deal with the value $N$ such that $-N$ is the minimal valuation of the $f_i$, for $0\leq i\leq n$.
Then, we have proved that the $2$-residue is invariant under a change of variables $u=f(x,y)$ such that the $f_i$ have valuation minored by $-N$.
But we proved it for all $N$, which concludes the proof.   
\end{rem}

\section{About tensor products}\label{tens}

Statements of this appendix are quite elementary results of linear algebra. We prove them because of a lack of references.

\begin{lem}\label{orthotens}
  Let $(E,<,>_E)$ and $(F,<,>_F)$ be two finite-dimensional vector spaces over an arbitrary field $k$ with respective non-degenerate bilinear forms $<\ \!,>_E$ and $<\ \!,>_F$. Let $A$ and $B$ be respective subspaces of $E$ and $F$, then for the bilinear form $<\ \!,>_{E \otimes F}:=<\ \!,>_E \otimes <\ \!,>_F$ on $E\otimes_k F$, we have
$$
(A\otimes_k B)^{\bot}=A^{\bot} \otimes_k F + E \otimes_k B^{\bot}.
$$
\end{lem}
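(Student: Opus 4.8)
The plan is to prove the two inclusions separately, and then observe that a dimension count forces equality (so in fact only one inclusion plus a dimension argument is strictly needed, but both are easy). First I would fix bases adapted to the subspaces: choose a basis $(e_1,\dots,e_p,e_{p+1},\dots,e_r)$ of $E$ such that $(e_1,\dots,e_p)$ is a basis of $A$, and similarly a basis $(f_1,\dots,f_q,f_{q+1},\dots,f_s)$ of $F$ with $(f_1,\dots,f_q)$ a basis of $B$. Then $\{e_i\otimes f_j\}$ is a basis of $E\otimes_k F$ and $A\otimes_k B$ is spanned by those $e_i\otimes f_j$ with $i\le p$ and $j\le q$. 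Because $<\,,>_E$ and $<\,,>_F$ are non-degenerate, so is $<\,,>_{E\otimes F}$, since $<e_i\otimes f_j, e_{i'}\otimes f_{j'}>_{E\otimes F}=<e_i,e_{i'}>_E<f_j,f_{j'}>_F$ and the matrix of the tensor form is the Kronecker product of the two Gram matrices, whose determinant is a product of powers of the two (nonzero) determinants.

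For the inclusion $A^{\bot}\otimes_k F + E\otimes_k B^{\bot}\subseteq (A\otimes_k B)^{\bot}$, it suffices by bilinearity to check it on pure tensors: if $a'\in A^{\bot}$ and $y\in F$, then for any $a\in A$, $b\in B$ we get $<a'\otimes y,\ a\otimes b>_{E\otimes F}=<a',a>_E<y,b>_F=0$; the term $E\otimes_k B^{\bot}$ is handled symmetrically. For the reverse inclusion, take $z\in (A\otimes_k B)^{\bot}$ and write $z=\sum_{i,j}\lambda_{ij}\, e_i\otimes f_j$. Pairing $z$ against $e_k\otimes f_l$ with $k\le p$, $l\le q$ and using non-degeneracy to change bases (or, more directly, replacing the $e_i$'s by the dual basis with respect to $<\,,>_E$ and likewise for $F$) shows that the ``corner'' coefficients vanish: after passing to dual bases one finds $\lambda_{kl}=0$ whenever $k\le p$ and $l\le q$. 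Hence $z$ lies in the span of the $e_i\otimes f_j$ with $i>p$ or $j>q$, and that span is exactly $A^{\bot}\otimes_k F + E\otimes_k B^{\bot}$ once we identify $A^{\bot}=\mathrm{span}(e_i^{*}:i>p)$ and $B^{\bot}=\mathrm{span}(f_j^{*}:j>q)$ in the dual-basis picture.

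The only genuinely delicate point is the bookkeeping in the last step: one must be careful that $A^{\bot}\otimes_k F + E\otimes_k B^{\bot}$ is really the full span of the ``off-corner'' basis vectors and not something smaller, i.e. that the sum is not direct and the overlap $A^{\bot}\otimes_k B^{\bot}$ is correctly accounted for. This is clarified by noting $\dim(A^{\bot}\otimes_k F + E\otimes_k B^{\bot}) = (r-p)s + r(s-q) - (r-p)(s-q) = rs - pq$, which matches $\dim (A\otimes_k B)^{\bot} = rs - \dim(A\otimes_k B) = rs - pq$. Combined with either inclusion, this equality of dimensions closes the argument. I expect no real obstacle here beyond keeping the index ranges straight; the use of dual bases (legitimate precisely because the forms are non-degenerate) is what makes the identification of $A^{\bot}$ with a coordinate subspace clean.
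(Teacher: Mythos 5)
Your proof is correct. The overall skeleton matches the paper's: the containment of $A^{\bot}\otimes_k F + E\otimes_k B^{\bot}$ in $(A\otimes_k B)^{\bot}$ is checked on pure tensors, and the final identification rests on the dimension count $\dim(A^{\bot}\otimes F + E\otimes B^{\bot}) = rs - pq = \dim(A\otimes B)^{\bot}$, which requires knowing that the overlap of the two summands is exactly $A^{\bot}\otimes B^{\bot}$ --- precisely the auxiliary identity the paper isolates and proves with an adapted basis. Where you diverge is in also giving a \emph{direct} proof of the hard inclusion: passing to dual bases (legitimate by non-degeneracy) turns $A^{\bot}$ and $B^{\bot}$ into coordinate subspaces, so that $(A\otimes B)^{\bot}$ is visibly the span of the ``off-corner'' basis vectors, which is the right-hand side. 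This buys you something the paper's argument does not have: the intersection formula becomes immediate (two coordinate subspaces intersect in the coordinate subspace indexed by the intersection of index sets), and the dimension count becomes a mere cross-check rather than the crux. The paper, by contrast, never proves the hard inclusion directly and leans entirely on the dimension comparison, leaving the final computation to the reader. One minor point to keep in mind if you wanted full generality: for a non-symmetric bilinear form the dual-basis normalization $\langle e_i^{*}, e_j\rangle_E=\delta_{ij}$ must be taken on the side consistent with the chosen notion of orthogonal; this is harmless here since the application is to the standard symmetric pairing on $\F_q^q$, and the paper glosses over the same point.
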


\begin{proof}
  Inclusion ``$\supseteq$'' is obvious. For the reverse one, we will prove that both spaces have the same dimension.
First, we have to prove that
\begin{equation}\label{equalspaces}
A^{\bot} \otimes_k F \cap E \otimes_k B^{\bot}=A^{\bot} \otimes B^{\bot}.
\end{equation}

\noindent Here again, inclusion ``$\supseteq$'' is obvious. For the reverse one, consider bases $(e_i)_{i\in I_0}$ and $(f_j)_{j\in J_0}$ respectively of $A^{\bot}$ and $B^{\bot}$ and complete them as bases $(e_i)_{i\in I}$ and $(f_j)_{j\in J}$ of $E$ and $F$. Then, for all $s=\sum_{i,j}s_{ij}e_i\otimes f_j \in E\otimes F$, we have
$$
\ s\in A^{\bot} \otimes_k F \cap E \otimes_k B^{\bot} \Longrightarrow \Big( \forall (i,j) \in (I\smallsetminus I_0)\times (J \smallsetminus J_0), \ s_{ij}=0 \Big).
$$
Thus, (\ref{equalspaces}) is proved and entails
$$
\dim (A^{\bot} \otimes F + E \otimes B^{\bot})=
\dim (A^{\bot} \otimes F) + \dim (E \otimes B^{\bot}) -
\dim (A^{\bot} \otimes B^{\bot}).
$$

\noindent After an easy computation, we prove that spaces $(A \otimes B)^{\bot}$ and $(A^{\bot}\otimes F+E \otimes B^{\bot})$ have the same dimension, which concludes the proof.
\end{proof}

\begin{lem}\label{lemtens}
Let $E$ and $F$ be two vector spaces over an arbitrary field $k$. Let $A$ (resp. $B$) be a strict nonzero subspace of $E$ (resp $F$). Then, the subspace $A\otimes_k F + E\otimes_k B$ of $E\otimes_k F$ cannot be written as an elementary tensor product $U\otimes V$.  
\end{lem}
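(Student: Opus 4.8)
The plan is to argue by contradiction: suppose that $A \otimes_k F + E \otimes_k B = U \otimes_k V$ for some subspaces $U \subseteq E$ and $V \subseteq F$. The crux is an elementary observation about how a subspace of the shape $U \otimes_k V$ meets rank-one tensors: if $a \in E \setminus \{0\}$ and $f \in F$ satisfy $a \otimes f \in U \otimes_k V$, then $f \in V$; and symmetrically, if $b \in F \setminus \{0\}$ and $e \in E$ satisfy $e \otimes b \in U \otimes_k V$, then $e \in U$. First I would prove this: choose $\psi \in E^{*}$ with $\psi(a) = 1$ and apply the map $\psi \otimes \mathrm{id}_F : E \otimes_k F \to F$; it carries $U \otimes_k V$ into $V$ (it sends $\sum u_i \otimes v_i$ to $\sum \psi(u_i)\, v_i$) and it sends $a \otimes f$ to $\psi(a)\, f = f$, hence $f \in V$. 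The symmetric statement follows in the same way with a functional on the other factor.

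Granting this, the argument is short. Since $A \neq 0$, fix $a \in A \setminus \{0\}$. For every $f \in F$ we have $a \otimes f \in A \otimes_k F \subseteq U \otimes_k V$, hence $f \in V$; as $f$ was arbitrary, $V = F$. Applying the symmetric observation to a fixed $b \in B \setminus \{0\}$ and to the inclusion $E \otimes_k B \subseteq U \otimes_k V$ yields $U = E$. Therefore $U \otimes_k V = E \otimes_k F$, that is, $A \otimes_k F + E \otimes_k B = E \otimes_k F$.

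It then remains to contradict the equality $A \otimes_k F + E \otimes_k B = E \otimes_k F$ using the hypotheses $A \varsubsetneq E$ and $B \varsubsetneq F$. Here I would choose complements $E = A \oplus A'$ and $F = B \oplus B'$, which are nonzero precisely because the inclusions are proper, expand $E \otimes_k F = (A \otimes_k B) \oplus (A \otimes_k B') \oplus (A' \otimes_k B) \oplus (A' \otimes_k B')$, and observe that $A \otimes_k F + E \otimes_k B$ is contained in the sum of the first three summands. Since $A' \otimes_k B' \neq 0$, this containment is proper, contradicting the previous paragraph. The only point requiring a little care is the clean formulation and proof of the rank-one detection property above; everything else is routine bookkeeping with direct-sum decompositions, so I do not expect a serious obstacle.
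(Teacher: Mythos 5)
Your proof is correct, and it takes a noticeably different route from the one in the paper. The paper argues by cases on whether $U\subseteq A$: if not, it picks $u\in U\smallsetminus A$ and, expanding $u\otimes v$ in a basis adapted to $A$ and $B$, uses the inclusion $U\otimes V\subseteq A\otimes F+E\otimes B$ to force $V\subseteq B$, and then exhibits an element $a\otimes f$ of $A\otimes F$ that cannot lie in $E\otimes B$; the case $U\subseteq A$ is handled symmetrically. You instead exploit the \emph{reverse} inclusion $A\otimes F+E\otimes B\subseteq U\otimes V$: your coordinate-free rank-one detection lemma (contracting against a functional $\psi$ with $\psi(a)=1$) shows that $A\otimes F\subseteq U\otimes V$ forces $V=F$ and $E\otimes B\subseteq U\otimes V$ forces $U=E$, reducing everything to the single clean fact that $A\otimes F+E\otimes B$ misses the nonzero summand $A'\otimes B'$ of $E\otimes F=(A\oplus A')\otimes(B\oplus B')$. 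Your version buys symmetry (no case split) and avoids explicit index bookkeeping, at the modest cost of invoking the existence of the functional $\psi$ and of complements $A'$, $B'$, both of which rest on basis extension exactly as the paper's choice of bases does; in particular your argument is equally valid in infinite dimension. Both proofs are sound; yours is arguably the more transparent of the two.
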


\begin{proof}
Assume that $A\otimes F + E\otimes B =U\otimes V$ for some subspace $U$ (resp. $V$) of $E$ (resp. $F$).

\medbreak

\noindent Let $(e_i)_{i \in I_0}$ (resp. $(f_j)_{j \in J_0}$) be a basis of $A$ (resp. $B$) completed in a basis $(e_i)_{i \in I}$ of $E$ (resp. $(f_j)_{j \in J}$ of $F$).
Assume that $U \nsubseteq A$ and choose $u \in U$ such that $u \notin A$. Then, for all $v \in V$, the vector $u \otimes v$ is of the form
$$
u \otimes v=\sum_{i,j}u_i v_j e_i \otimes f_j.
$$
From the assumption $A\otimes F + E\otimes B =U\otimes V$, the product $u_iv_j$ is zero for all couple $(i,j) \in I\smallsetminus I_0 \times J\smallsetminus J_0$. Since $u \notin A$, there exists at least one index $i_1\in I\smallsetminus I_0$ such that $u_{i_1}\neq 0$. Thus, for all $j\in J\smallsetminus J_0$, we have $u_{i_1}v_j=0$ which entails that $v\in B$. 
This statement works for all $v\in V$, hence $U\otimes V \subseteq E\otimes B$. Now choose $f\in F$ such that $f\notin B$ and $a\in A\smallsetminus \{0\}$.
Then, $a \otimes f \notin E\otimes B$, thus $a \otimes f \notin U\otimes V$ which contradicts $A\otimes F + E\otimes B =U\otimes V$.

\medbreak

\noindent If $U\subseteq A$, use the same argument replacing $U,A,E$ by $V,B,F$.
\end{proof}


\section*{Acknowledgments}
I would like to thank Gerhard Frey for his explanations about valuations, Emmanuel Hallouin for his fruitful advices in commutative and local algebra and the anonymous referee for his conscientious work. Moreover, I'm also very grateful to my advisor Marc Perret, for all his so constructive comments about this paper. 

\bibliographystyle{abbrv}
\bibliography{resbibi}

\end{document}